\documentclass[final,1p,times]{elsarticle}
\usepackage{amssymb}
\usepackage{amsmath}
\usepackage{amsthm}
\usepackage[utf8]{inputenc}
\usepackage{marginnote}
\usepackage{amsfonts}
\usepackage{textcomp}
\usepackage{color,soul}
\usepackage{placeins}
\usepackage{tikz}
\usepackage{makecell}
\usepackage{amssymb}
\usepackage{graphicx}
\usepackage{hyperref}
\usepackage[none]{hyphenat}
\usepackage{lipsum}
\newtheorem{thm}{Theorem}[section]
\newtheorem{lem}[thm]{Lemma}
\newtheorem{rem}[thm]{Remark}
\newtheorem{prop}[thm]{Proposition}
\newtheorem{cor}[thm]{Corollary}
\newtheorem{example}[thm]{Example}
\newdefinition{defn}[thm]{Definition}
\allowdisplaybreaks

\journal{}
\begin{document}
	\begin{frontmatter}
		
		\title{Optimal $K-$dual frames and pairs in the presence of erasures}
		\author{Shankhadeep Mondal}
		\ead{shankhadeep.mondal@ucf.edu}
		\author{Deguang Han}
		\ead{deguang.han@ucf.edu}
		\author{R. N. Mohapatra}
		\ead{ram.mohapatra@ucf.edu}
		
		\address{Department of Mathematics, University of Central Florida}
		\cortext[shan]{Corresponding Author: shankhadeep.mondal@ucf.edu}		
		
		\begin{abstract}
		This paper explores the structure of optimal $K-$dual frames for a given $K-$frame and optimal $K-$dual pairs, within the context of erasures which occur during the transmission of frame coefficients. We address two distinct erasure scenarios and examine their impact on the reconstruction process. The optimality criteria are defined in terms of minimizing the spectral radius and the operator norm of the associated error operators. Through this approach, we provide a comprehensive framework for understanding and mitigating the effects of erasures in frame theory, contributing to enhanced robustness in data transmission and recovery.
		\end{abstract}

		\begin{keyword}
			Erasures, $K-$Frames, Optimal dual, Optimal $K-$dual pair.
			\MSC[2010] 42C15, 47B02, 94A12
		\end{keyword}
		
	\end{frontmatter}

\section{Introduction}
The concept of frames in Hilbert spaces were first introduced by Duffin and Schaeffer \cite{duff} in the context of addressing issues related to non-harmonic Fourier analysis. Frames can be understood as overcomplete systems that extend the concept of Riesz bases. The redundancy inherent in frames proves to be highly beneficial in various practical applications, including signal and image processing, sampling theory, data compression, where stability and flexibility in representation are crucial. In real-world communication system data loss often arises from unfavorable network conditions including channel noise, limited bandwidth or congestion. Such disruptions may cause the loss of certain frame coefficients during transmission, commonly referred to as \text{erasures.}. Frame redundancy plays a crucial role in counteracting these losses and improving the reliability of signal reconstruction. Motivated by these advantages, a substantial amount of recent research has been directed toward developing frame-based methods for enhancing data recovery in the presence of erasures.

Casazza and Kova{\v{c}}evi{'c} \cite{casa2} have thoroughly analyzed the robustness of equal-norm tight frames against erasures, highlighting their advantage over others. Similarly, Goyal, Kova{\v{c}}evi{'c}, and Kelner \cite{goya} explored uniform tight frames within a coding-theoretic framework, identifying them as optimal for single erasures. The notion of optimal dual frames which is crucial for minimizing maximum reconstruction errors, was introduced by Holmes and Paulsen \cite{holm}, who considered the operator norm as a measure of optimality. They established both necessary and sufficient conditions under which Parseval frames and their canonical dual pairs achieve minimal reconstruction error for any possible location of erasures. In \cite{bodm1}, the problem was studied by defining the reconstruction error numerically as a type of average—specifically, averaging the operator norms of the error operators over all possible erasure patterns of a fixed size and then minimizing this average. Subsequent research has further refined these concepts. Lopez and Han \cite{jerr} investigated sufficient conditions ensuring that the canonical dual frame is the uniquely optimal dual in the case of both one and more erasures. In the context of $1-$erasure, Pehlivan, Han and Mohapatra \cite{sali} identified dual frames that minimize the maximum error by taking spectral radius as error measurement of the associated error operators. This line of work was subsequently extended to address the case of two erasures in \cite{peh,dev}. Given the inherent randomness in erasure events, a probabilistic model for analyzing such problems is indispensable. Leng, Han, and Huang \cite{leng3, leng} incorporated probability-weighted models to explore optimality under operator norms, providing conditions for probabilistic optimality for erasures. Optimal dual frames under probabilistic erasures were further studied in \cite{shan4, li2}.

In recent years, numerous generalized frame structures have emerged, including fusion frames, group representation frames, and \(K-\)frames. Among these, \(K-\)frames introduced by G\u{a}vru\c{t}a \cite{gua}, have received considerable attention for their utility in theoretical analysis and practical applications, particularly in the context of data loss and signal recovery. A \(K-\)frame extends the classical notion of frames by allowing reconstruction only within the range of a bounded linear operator \(K\). Several authors have contributed to the structural and constructive development of \(K\)-frames and their duals. For example, \cite{du} presents new constructions of \(K\)-\(g\)-frames by exploiting the interplay between the frame operator and a positive operator, providing necessary and sufficient conditions for their existence. The paper \cite{he} studies \(K-\)frames generated via operator iterates of the form \(\{T^k f_0\}_{k \in \mathbb{Z}}\), focusing on the boundedness and stability of the operator \(T\) in relation to \(K\). In \cite{li4}, the authors explore strategies for constructing \(K\)-dual frame pairs and identifying common \(K-\)dual Bessel sequences. Generalized approaches appear in \cite{bar}, where dual systems are studied in the setting of Hilbert \(C^*\)modules, including conditions under which the sum of two such frames forms a valid \(c\)-\(K\)-\(g\)-frame. The work in \cite{wang} investigates additive properties of \(K-\)frames in \(n\)-Hilbert spaces, while \cite{nami} introduces continuous \(K\)-frames (or \(ck-\)frames) and provides methods for constructing their duals. Additionally, \cite{li3} offers a characterization of \(K\)-\(g\)-frames using quotient operator theory, deriving necessary and sufficient conditions for duality. Together, these contributions advance the landscape of \(K-\)frame theory and lay the groundwork for further exploration into optimal dual design under erasure and reconstruction constraints. The literature on $K-$frames continues to grow, with additional insights and generalizations found in the works of various authors, such as those in \cite{pill,xiao,koba}.

This paper extends these studies by focusing on two key aspects of the erasure problem:
\begin{itemize}
	\item \textbf{Optimal $K-$dual pairs:} We investigate the existence and characterization of $K-$dual pairs that minimize the maximum reconstruction error across all possible erasure patterns, with error measured by the operator norm and the spectral radius.
	
	\item \textbf{Optimal $K-$dual frames:} For a fixed $K-$frame, we examine the existence and structure of optimal $K-$dual frames under the same error measurement.
\end{itemize}

Section~2 develops the foundational framework for modeling coefficient erasures in $K-$frame systems. Section~3 addresses the pairwise optimization problem, where we analyze all possible $(N,n)$ $K-$dual pairs $(F, G)$. We introduce the notions of \textit{$1-$uniform} and \textit{$2-$uniform} $K-$dual pairs, and show that $1-$uniformity ensures optimality for one erasure with respect to both the operator norm and the spectral radius. On the other hand, $2-$uniformity guarantees optimality for two erasures with respect to the spectral radius. We derive tight lower bounds for the maximum error in the two-erasure case and identify conditions under which these bounds are attained. In particular, we show that if a $2-$uniform $K-$dual pair exists and it achieves the optimality, which we express explicitly in terms of the trace and Hilbert–Schmidt norm of the operator $K$. We also prove that both operator norm and spectral radius optimality are preserved under unitary transformations that commute with $K$. Section~4 is devoted the fixed-frame optimization problem, where we fix a given Parseval $K-$frame $F$ and consider all possible $K-$dual frames $G$ of $F$. We define the maximum reconstruction error for one erasure using both the operator norm and the spectral radius, and derive necessary and sufficient conditions for the canonical $K-$dual of a Parseval $K-$ frame, namely $K^\dag F,$ to be optimal.  Furthermore, we show that when the number of frame elements exceeds the dimension of the space ($N > n$), there exist uncountably many $K$-duals of $F$ that achieve the same optimal error, illustrating the non-uniqueness of optimal duals in this setting. We also provide carefully constructed examples of Parseval $K-$frames and their duals to support and clarify the theoretical results. These examples highlight both cases where the canonical dual is the unique optimal choice and cases where multiple optimal duals exist. Our findings not only extend classical results in frame theory to the setting of \( K-\)frames, but also provide a rigorous framework for analyzing optimal reconstruction in the presence of erasures, which is an area of fundamental importance in signal processing, coding theory, and various other engineering disciplines.

	\section{Preliminaries  }


Let \( \mathcal{H}_n \) be an \( n \)-dimensional Hilbert space and let \( \mathcal{B}(\mathcal{H}_n) \) denote the Banach algebra of all bounded linear operators on \( \mathcal{H}_n \). A finite sequence of vectors \( F = \{f_i\}_{i=1}^N \subset \mathcal{H}_n \), with \( N \geq n \), is said to form a \textit{frame} for \( \mathcal{H}_n \) if there exist positive constants \( A \) and \( B \) such that
$$ A \|f\|^2 \leq \sum_{i=1}^N |\langle f, f_i \rangle|^2 \leq B \|f\|^2, \quad \text{for all } f \in \mathcal{H}_n. $$
The values \( A \) and \( B \) are referred to as the lower and upper frame bounds respectively. A frame is called \textit{tight} if \( A = B \) and \textit{Parseval} if \( A = B = 1 \). Given a frame \( F = \{f_i\}_{i=1}^N \) for the Hilbert space \( \mathcal{H}_n \), the associated analysis operator is the map \( \Theta_F : \mathcal{H}_n \to \mathbb{C}^N \), defined by \( \Theta_F(f) = \{\langle f, f_i \rangle\}_{i=1}^N \). Its adjoint \( \Theta_F^* : \mathbb{C}^N \to \mathcal{H}_n \) is called the synthesis operator and satisfies \( \Theta_F^*(\{c_i\}) = \sum_{i=1}^N c_i f_i \). The frame operator \( S_F = \Theta_F^* \Theta_F \) is given by \( S_F f = \sum_{i=1}^N \langle f, f_i \rangle f_i \), is self-adjoint, positive and invertible. A sequence \( G = \{g_i\}_{i=1}^N \subset \mathcal{H}_n \) is a dual frame of \( F \) if every \( f \in \mathcal{H}_n \) can be reconstructed as \( f = \sum_{i=1}^N \langle f, f_i \rangle g_i \), equivalently \( \Theta_G^* \Theta_F = I \). The canonical dual of \( F \) is \( \{S_F^{-1} f_i\} \) and all duals are of the form \( \{S_F^{-1}f_i + u_i\} \), where \( \{u_i\} \) satisfies a biorthogonality condition i.e. $\sum_{i=1}^N \langle f, f_i \rangle u_i = \sum_{i=1}^N \langle f, u_i \rangle f_i =0,$ for all $f \in \mathcal{H}_n.$ Moreover, any dual pair \( (F, G) \) satisfies the relation  \( \sum_{i=1}^N \langle f_i, g_i \rangle = \mathrm{tr}(I) = n \). For further details, we refer to \cite{ole}.

Now, for any bounded operator \( K \in \mathcal{B}(\mathcal{H}_n) \), a sequence \( F = \{f_i\}_{i=1}^N \subset \mathcal{H}_n \) is called a \( K-\)\textit{frame} if there exist constants \( 0 < A \leq B < \infty \) such that
\[
A\|K^*f\|^2 \leq \sum_{i=1}^N |\langle f, f_i \rangle|^2 \leq B\|f\|^2, \quad \forall f \in \mathcal{H}_n.
\]
If equality holds in the lower bound, i.e., \( \sum_{i=1}^N |\langle f, f_i \rangle|^2 = A\|K^*f\|^2 \), then \( F \) is called a \textit{tight} \( K-\)frame. In particular, when \( A = 1 \), the frame is a \textit{Parseval} \( K -\)frame.

For a \( K-\)frame \( F = \{f_i\}_{i=1}^N \), we define the associated operators as follows:
\begin{itemize}
  \item The \textit{analysis operator} \( \Theta_F^K : \mathcal{H}_n \to \mathbb{C}^N \) is defined by
  \[ \Theta_F^K(f) = \{\langle f, f_i \rangle\}_{i=1}^N. \]

  \item The \textit{synthesis operator} \( (\Theta_F^K)^* : \mathbb{C}^N \to \mathcal{H}_n \) is given by
  \[ (\Theta_F^K)^*(\{c_i\}_{i=1}^N) = \sum_{i=1}^N c_i f_i. \]

  \item The \textit{frame operator} \( S_F^K : \mathcal{H}_n \to \mathcal{H}_n \) is defined as the composition
  \[ S_F^K f = (\Theta_F^K)^* \Theta_F^K f = \sum_{i=1}^N \langle f, f_i \rangle f_i. \]
\end{itemize}

If \( K \) has closed range and \( F = \{f_i\}_{i=1}^N \) is a Parseval \( K-\)frame, then the sequence \( \{K^\dag f_i\}_{i=1}^N \) forms a \textit{canonical} \( K-\)dual of \( F \), where \( K^\dag \) is the Moore--Penrose pseudoinverse of \( K \). A Bessel sequence \( G = \{g_i\}_{i=1}^N \) is a \( K-\)dual of the \( K-\)frame \( F \) if it satisfies
\begin{equation}\label{eq:k-dual}
Kf = \sum_{i=1}^N \langle f, g_i \rangle f_i, \quad \forall f \in \mathcal{H}_n.
\end{equation}
From \eqref{eq:k-dual}, it follows that \( K^*f = \sum_{i=1}^N \langle f, f_i \rangle g_i. \) In general, the sequences \( \{f_i\} \) and \( \{g_i\} \) are not interchangeable in \eqref{eq:k-dual} unless \( K \) is self-adjoint. A pair \( (F, G) \) of \( N \) element sequences is called an \( (N, n) \) \( K -\)dual pair if \( G \) is a \( K-\)dual of the $K-$frame \( F \) and \( F \) is a \( K^* -\)dual of \( G \).
\begin{rem}
If \( G \) is a \( K-\)dual of \( F \), then \( F \) is a \( K^*-\)dual of \( G \).
\end{rem}
\noindent In practical scenarios, due to data loss some partial coefficients of \( \Theta_F^K(f) \) may be unavailable. Let \( E \) be a diagonal matrix with entries 0 or 1 indicating erased or received coefficients, and define \( D = I - E \). The reconstructed signal is given by
\[
\widehat{Kf} = (\Theta_F^K)^* E \Theta_G^K f,
\]
and the corresponding reconstruction error becomes
\[
Kf - \widehat{Kf} = (\Theta_F^K)^* D \Theta_G^K f.
\]
If \( \Lambda \) denotes the index set of the erased coefficients, then
\[ Kf - \widehat{Kf} = \sum_{i \in \Lambda} \langle f, g_i \rangle f_i. \]

For a \( K-\)frame \( F \) and a \( K -\)dual \( G \), the worst-case error for all possible \( m-\)erasures is expressed as
\[ \max \left\{ \mathcal{M}\left((\Theta_G^K)^* D \Theta_F^K\right) : D \in \mathcal{D}_m \right\}, \]
where \( \mathcal{M} \) is an appropriate error measure and \( \mathcal{D}_m \) is the collection of all diagonal matrices with exactly \( m \) ones.

    \section{Characterization of Optimal $K-$Dual Pairs}
    
Our objective in this section is to determine the optimal \( K-\)dual pairs that minimize the maximum reconstruction error. To facilitate this analysis, we introduce the concepts of \(1-\)uniform and \(2-\)uniform \(K-\)duals, which play a crucial role in studying optimality with respect to both the operator norm and the spectral radius of the corresponding error operator.

\begin{defn}\label{defn3point1}
Let \( F = \{f_i\}_{i=1}^N \) be a \( K-\)frame for \( \mathcal{H}_n \). A \( K-\)dual \( G = \{g_i\}_{i=1}^N \) of \( F \) is called a \emph{$1-$uniform \( K-\)dual} if there exists a constant \( c \in \mathbb{C} \) such that $\langle f_i, g_i \rangle = c \quad \text{for all } 1 \leq i \leq N.$ In this case, the pair \( (F, G) \) is called \emph{$1-$uniform \( K-\)dual pair}.
\end{defn}

\begin{rem}\label{rem3point2}
If $(F,G)$ is a $1-$uniform $K-$dual pair, then the constant  \( c  \) turns out to be \(\dfrac{\operatorname{tr}(K)}{N}\), since
\[
\operatorname{tr}(K) = \operatorname{tr}\left( (\Theta_F^K)^* \Theta_G^K \right) = \operatorname{tr}\left( \Theta_G^K (\Theta_F^K)^* \right) = \sum_{i=1}^N \langle f_i, g_i \rangle = cN.
\]
\end{rem}

\begin{defn}
 A $1-$uniform \( K-\)dual pair \( (F, G) \) is called \emph{$2-$uniform \( K -\)dual pair} if there exists a constant \( c' \in \mathbb{C} \) such that $\langle f_i, g_j \rangle \langle f_j, g_i \rangle = c', \quad \text{for all } 1 \leq i, j \leq N \text{ with } i \neq j.$ In this case, the sequence \( G \) is called a \emph{$2-$uniform \( K -\)dual} of \( F \).
\end{defn}

\subsection{Operator norm based optimality of $K-$dual pairs}

In this subsection, we study optimality of an $(N,n)$ $K-$dual pair $(F, G)$ for the Hilbert space $\mathcal{H}_n$ by using the operator norm as the error measurement \( \mathcal{M} \). Specifically, we define
\begin{align*}
	&\mathbb{O}_{1}(F,G) := \max \left\{ \left\| (\Theta_{G}^K)^* D \Theta_F^K \right\| : D \in \mathcal{D}_1 \right\}, \\&
	\widetilde{\mathbb{O}}_{1} := \inf \left\{ \mathbb{O}_{1}(F,G) : (F,G)\; \text{is an } (N,n)\; K\text{-dual pair for } \mathcal{H}_n \right\}.
\end{align*}
Here, \( \mathbb{O}_{1}(F,G) \) quantifies the worst-case reconstruction error for $1-$erasures and \( \widetilde{\mathbb{O}}_{1} \) represents the infimum of this error across all possible $(N,n)$ $K$-dual pairs. A $K-$dual pair \( (F', G') \) is said to be a \textit{$1-$erasure optimal with respect to the operator norm} if $\mathbb{O}_{1}(F', G') = \widetilde{\mathbb{O}}_{1}.$
\par For a given $(N,n)$ $K$-dual pair $(F, G)$, if the erasure occurs in the $i-$th position, the associated operator norm of the error operator becomes $\left\| (\Theta_{G}^K)^* D \Theta_{F}^K \right\| = \sup\limits_{\|f\|=1} \| \langle f, f_i \rangle g_i \| = \|f_i\| \cdot \|g_i\|.$ Therefore, we have the identity:
\begin{equation}\label{equation4}
	\mathbb{O}_{1}(F, G) = \max \bigg\{ \|f_i\| \cdot \|g_i\| : 1 \leq i \leq N \bigg\}.
\end{equation}

We now proceed to characterize those $(N,n)$ $K-$dual pairs that achieve $1-$erasure optimality with respect to the operator norm.

\begin{prop}\label{thm4point1}
	Let \( K \in \mathcal{B}(\mathcal{H}_n) \) be a positive operator. Then, $\widetilde{\mathbb{O}}_{1} = \dfrac{\operatorname{tr}(K)}{N}.$ Moreover, an $(N,n)$ $K-$dual pair \( (F, G) \) is a $1-$erasure optimal $K-$dual pair with respect to the operator norm if and only if \,$\|f_i\| \cdot \|g_i\| = \dfrac{\operatorname{tr}(K)}{N}, \quad \text{for all } 1 \leq i \leq N.$
\end{prop}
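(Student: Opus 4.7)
The plan is to establish a universal lower bound $\mathbb{O}_1(F,G) \ge \operatorname{tr}(K)/N$ for every $(N,n)$ $K$-dual pair, read off the equality characterization from the resulting chain of inequalities, and finally display an explicit optimal pair to conclude that the infimum is actually attained.

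The lower bound rests on the identity from Remark \ref{rem3point2}, namely $\sum_{i=1}^{N} \langle f_i, g_i\rangle = \operatorname{tr}(K)$. Positivity of $K$ makes $\operatorname{tr}(K)$ a nonnegative real, so the complex sum on the left collapses to its own real part. Applying Cauchy-Schwarz to each summand, followed by the trivial bound $\sum_i a_i \le N\,\max_i a_i$ for nonnegative reals, together with identity (\ref{equation4}), yields
\[
\operatorname{tr}(K) \;=\; \sum_{i=1}^{N} \langle f_i, g_i\rangle \;\le\; \sum_{i=1}^{N} \|f_i\|\,\|g_i\| \;\le\; N \max_{1 \le i \le N} \|f_i\|\,\|g_i\| \;=\; N\,\mathbb{O}_1(F,G),
\]
so $\widetilde{\mathbb{O}}_1 \ge \operatorname{tr}(K)/N$. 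If $\mathbb{O}_1(F,G) = \operatorname{tr}(K)/N$, every inequality is forced to be an equality; sharpness in the "sum $\le N\cdot$max" step makes all $\|f_i\|\,\|g_i\|$ coincide, necessarily at the common value $\operatorname{tr}(K)/N$. The reverse implication is immediate from (\ref{equation4}).

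For attainability I would exhibit an optimal pair of the form $f_i = g_i = \sqrt{K}\,e_i$, where $\{e_i\}_{i=1}^N$ is a Parseval frame for $\mathcal{H}_n$ chosen so that $\langle K e_i, e_i\rangle = \operatorname{tr}(K)/N$ for every $i$. A short computation using the Parseval reconstruction identity gives $\sum_i \langle f, g_i\rangle f_i = \sqrt{K}\bigl(\sum_i \langle \sqrt{K}f, e_i\rangle e_i\bigr) = Kf$, and by construction $\|f_i\|\,\|g_i\| = \langle K e_i, e_i\rangle = \operatorname{tr}(K)/N$. The main obstacle I anticipate is verifying that such a flat-diagonal Parseval frame exists; this is a Schur--Horn majorization question, and since the constant vector of length $N$ summing to $\operatorname{tr}(K)$ is dominated by the eigenvalue sequence of $K$ padded with $N-n$ zeros, a standard majorization construction supplies the desired $\{e_i\}$ for every $N \ge n$.
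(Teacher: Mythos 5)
Your lower bound and the equality characterization are exactly the paper's argument: the trace identity $\sum_{i=1}^N\langle f_i,g_i\rangle=\operatorname{tr}(K)$, Cauchy--Schwarz termwise, and the observation that a sum of $N$ nonnegative reals equals $N$ times their maximum only when all terms coincide. Where you genuinely diverge is the witness for attainability. The paper starts from a \emph{uniform-norm} Parseval frame $F'$ with $\|f_i'\|^2=n/N$ (via Casazza--Leon) and then argues that $\|K^{1/2}f_i'\|=\|K^{1/2}\|\,\|f_i'\|$ by sandwiching with $\|K^{1/2}\|$ and $\|K^{-1/2}\|$; that deduction does not actually follow (the two operator-norm inequalities are compatible with strict inequality unless $K$ is a scalar multiple of the identity), and indeed $\langle Kf_i',f_i'\rangle$ need not be constant for an arbitrary uniform Parseval frame --- take $K=\operatorname{diag}(2,1)$ and the standard basis of $\mathbb{R}^2$. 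You instead demand a Parseval frame with \emph{flat $K$-diagonal}, $\langle Ke_i,e_i\rangle=\operatorname{tr}(K)/N$ for all $i$, and justify its existence by Schur--Horn majorization (the constant vector with the correct sum is majorized by the eigenvalues of $K$ padded with $N-n$ zeros, and any such diagonal is realized as $\Theta K\Theta^*$ for an isometry $\Theta^*$). This is the correct invariant to control --- it is $\langle Ke_i,e_i\rangle=\|K^{1/2}e_i\|^2$, not $\|e_i\|$, that must be equidistributed --- so your construction actually repairs a gap in the paper's own existence argument while landing on the same optimal pair $(K^{1/2}E,K^{1/2}E)$. The only cosmetic caveat is that you should note $\operatorname{tr}(K)>0$ (i.e.\ $K\neq 0$) so that the chain of inequalities is nondegenerate; aside from that the proof is complete.
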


\begin{proof}
	
	Suppose  $\mathbb{O}_{1}  (F,G) < 1,$\; for some $(N,n)$ $K-$dual pair $(F,G).$ Then, $ \|f_i\|\,\|g_i\| < \frac{tr(K)}{N} ,$ for all $1 \leq i \leq N.$ This leads to\, $tr(K)= \left|\sum\limits_{i=1}^N \langle f_i , g_i \rangle \right| \leq \sum\limits_{i=1}^N   \|f_i\|\,\|g_i\| < \sum\limits_{i=1}^N \dfrac{tr(K)}{N} = tr(K),$ \; which is not possible. Thus, $\mathbb{O}_{1}  (F,G) \geq\dfrac{tr(K)}{N},$ for every $K-$dual pair $(F,G)$.  Let $F'$ be a uniform Parseval frame, whose existence is guaranteed by Theorem 2.1 in \cite{cass2}. In particular, we consider the case where \( S \) is the identity operator, whose spectrum consists only of the eigenvalue \( 1 \) and we take \( a_i = \sqrt{\frac{n}{N}} \) for all \( i \).
 As $\sum\limits_{i=1}^k a_{i}^2 = \frac{kn}{N} \leq k,$ for $ 1\leq k \leq n$ and  $\sum\limits_{i=1}^N a_{i}^2 = n,$ there exists a Parseval frame $\{f'_i\}_{i=1}^N$ such that $\| f'_i\|^2 = \frac{n}{N}.$ Then $K^{\frac{1}{2}}F'$ is a $K-$frame with $K-$dual $K^{\frac{1}{2}}F',$ since $\sum\limits_{i=1}^N \langle f, K^{\frac{1}{2}}f'_i \rangle K^{\frac{1}{2}}f'_i = K^{\frac{1}{2}} \sum\limits_{i=1}^N \langle K^{\frac{1}{2}}f, f'_i \rangle f'_i =Kf,$ for all $f \in \mathcal{H}_n.$ Also, $\|K^{\frac{1}{2}} f'_i\| \leq \|K^{\frac{1}{2}}\|\,\|f'_i\|$ and $\|f'_i\| = \|K^{-\frac{1}{2}} K^{\frac{1}{2}} f'_i \| \leq \|K^{-\frac{1}{2}}\|\,\|K^{\frac{1}{2}} f'_i\|,$ for all $i.$ Thus, $\|K^{\frac{1}{2}} f'_i\| = \|K^{\frac{1}{2}}\|\,\|f'_i\|$ and hence, $K^{\frac{1}{2}}F'$ is a uniform  $K-$frame for $\mathcal{H}_n$ with $K-$dual $K^{\frac{1}{2}}F'.$ Using the fact that  $\sum\limits_{i=1}^N \langle K^{\frac{1}{2}} f'_i , K^{\frac{1}{2}} f'_i \rangle = tr (K)$ we have $\| K^{\frac{1}{2}}f'_i\| = \sqrt{\dfrac{tr(K)}{N}}.$ Therefore, $\mathbb{O}_{1}  (K^{\frac{1}{2}}F',K^{\frac{1}{2}}F') = \max\limits_{1 \leq i \leq N} \left\| K^{\frac{1}{2}}f'_i \right\|^2  = \dfrac{tr(K)}{N}.$
    
    \par  Now, let $(F,G) $ be a $1-$erasure optimal $K-$dual pair.  Then, $\max\limits_{1 \leq i \leq N}  \|f_i\|\;\|g_i \| = \frac{tr(K)}{N}.$ If for any $j \in \{1,2,\ldots,N\},$ $ \|f_j\|\;\|g_j  \| < \frac{tr(K)}{N},$  then $tr(K) = \left|\sum\limits_{i=1}^N \langle f_i, g_i \rangle \right|  \leq \sum\limits_{i=1}^N \|f_i\|\;\|g_i \| < \sum\limits_{i=1}^N \dfrac{tr(K)}{N} =tr(K) ,$ which is a contradiction.	Therefore, $ \|f_i\|\;\|g_i \| = \frac{tr(K)}{N},\, \forall\; i.$ Conversely, if   $\|f_i\|\;\|g_i\| = \dfrac{tr(K)}{N},\;\forall \,1 \leq i \leq N,$ then obviously, $\mathbb{O}_{1} (F,G) = \dfrac{tr(K)}{N}$ and hence $(F,G) $ is a $1-$erasure optimal $K-$dual pair.
	
\end{proof}

	\begin{rem}\label{Thm3point5}
For any positive operator $K \in \mathcal{B}(\mathcal{H}_n),$ there always exists a $K$-frame $F$ such that the $K-$dual pair $(F, F)$ is a $1-$erasure optimal $K-$dual pair under the operator norm.
\end{rem}

The following corollary establishes a link between $1-$erasure optimality under the operator norm and the $1-$uniformity of a $K-$dual pair.

\begin{cor}\label{cor3point5}
Let \( K \in \mathcal{B}(\mathcal{H}_n) \) be a positive operator. Then any \( K-\)dual pair that is $1-$erasure optimal with respect to the operator norm is necessarily  a $1-$uniform \( K-\)dual pair.
\end{cor}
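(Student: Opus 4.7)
The plan is to leverage Proposition 3.4 (which characterizes $1$-erasure optimal pairs via the equalities $\|f_i\|\,\|g_i\| = \operatorname{tr}(K)/N$) together with the identity $\operatorname{tr}(K) = \sum_{i=1}^N \langle f_i, g_i\rangle$, which follows from $K = (\Theta_F^K)^* \Theta_G^K$ and cyclicity of trace (exactly as used in Remark 3.2). The goal is to promote the norm-equality into an inner-product equality $\langle f_i, g_i\rangle = \operatorname{tr}(K)/N$ for every $i$, which is precisely the $1$-uniformity condition.

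The key step is a double application of the Cauchy--Schwarz inequality and the triangle inequality. Assuming $(F,G)$ is $1$-erasure optimal, I would write
\[
\operatorname{tr}(K) \;=\; \Bigl|\sum_{i=1}^N \langle f_i, g_i\rangle\Bigr| \;\leq\; \sum_{i=1}^N |\langle f_i, g_i\rangle| \;\leq\; \sum_{i=1}^N \|f_i\|\,\|g_i\| \;=\; N\cdot \frac{\operatorname{tr}(K)}{N} \;=\; \operatorname{tr}(K),
\]
where the first equality uses positivity of $K$ (so $\operatorname{tr}(K)\geq 0$) and the last uses Proposition 3.4. Since the two ends of the chain agree, each intermediate inequality must in fact be an equality.

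Extracting information from these equalities is the decisive move. Equality in Cauchy--Schwarz term-by-term forces $|\langle f_i, g_i\rangle| = \|f_i\|\,\|g_i\| = \operatorname{tr}(K)/N$ for every $i$; equality in the triangle inequality for a sum of complex numbers forces all the $\langle f_i, g_i\rangle$ to share a common argument; and the fact that their sum equals the nonnegative real number $\operatorname{tr}(K)$ pins this common argument to be $0$. Combining these, $\langle f_i, g_i\rangle = \operatorname{tr}(K)/N$ for all $i$, and the constant on the right is exactly the value identified in Remark 3.2. Hence $(F,G)$ is $1$-uniform.

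There is no real obstacle here; the only subtlety to watch is the degenerate case $\operatorname{tr}(K) = 0$. Since $K$ is positive, this forces $K = 0$, in which case $\sum_i \langle f_i, g_i\rangle = 0$ while all $\|f_i\|\,\|g_i\| = 0$, so each $\langle f_i, g_i\rangle = 0$ and $1$-uniformity holds trivially with $c = 0$. This edge case is worth a one-line mention, but it does not affect the structure of the argument.
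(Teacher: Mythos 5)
Your proposal is correct and follows essentially the same route as the paper: invoke Proposition~\ref{thm4point1} to get $\|f_i\|\,\|g_i\|=\operatorname{tr}(K)/N$, run the chain $\operatorname{tr}(K)=\left|\sum_i\langle f_i,g_i\rangle\right|\leq\sum_i|\langle f_i,g_i\rangle|\leq\sum_i\|f_i\|\,\|g_i\|=\operatorname{tr}(K)$, and extract term-by-term equalities. The only cosmetic difference is that the paper pins down the common phase by writing $\langle f_j,g_j\rangle=a_j+ib_j$ and comparing $\sum_j a_j$ with $\sum_j\sqrt{a_j^2+b_j^2}$, whereas you cite the equality case of the triangle inequality directly; both yield $\langle f_i,g_i\rangle=\operatorname{tr}(K)/N$ for all $i$.
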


\begin{proof}
	Let \((F, G)\) be a $1-$erasure optimal \(K-\)dual pair. It follows from Proposition~\ref{thm4point1} that, $\|f_i\| \, \|g_i\| = \dfrac{\operatorname{tr}(K)}{N}, \quad \forall i.$ So,
	$$ tr(K) = \displaystyle{\left|\sum_{i=1}^N \langle f_i , g_i \rangle \right|} \leq   \displaystyle{\sum_{i=1}^N \left| \langle f_i , g_i \rangle \right|} \leq \displaystyle{\sum_{i=1}^N \|f_i \|\; \|g_i \| } = \displaystyle{\sum_{i=1}^N \frac{tr(K)}{N} = tr(K) }.$$
	This implies that $\left| \langle f_j , g_j \rangle \right|  =   \|f_j \| \,\|g_j \|, \;\forall\, 1 \leq j\leq N.$	Taking $\langle f_j , g_j \rangle = a_j + ib_j ,\,\text{where} \;a_j,b_j \in \mathbb{R},\;  1 \leq j\leq N,$ we get  $\displaystyle{\sum_{j=1}^N a_j = tr(K) }$,  $\displaystyle{\sum_{j=1}^N b_j =0 }$ \;\;and $\sqrt{a_j^2 + b_j^2} = \dfrac{tr(K)}{N}, \;\;  1 \leq j\leq N. $	This gives
	$$ tr(K)= \displaystyle{\sum_{j=1}^N a_j } \leq  \displaystyle{\sum_{j=1}^N \sqrt{a_j^2 + b_j^2} } =  \displaystyle{\sum_{j=1}^N \frac{tr(K)}{N}} = tr(K).$$
	This shows that $b_j =0$\,and\,$a_j = |a_j|, \, \forall \,j. $ Therefore, $ \langle f_j , g_j \rangle =\frac{tr(K)}{N},$ \;$\forall \,1 \leq j \leq N.$
	
\end{proof}

\noindent It is well known that \((F, G)\) is a \(K-\)dual pair if and only if \((UF, UG)\) is also a \(K-\)dual pair, where \(U\) is a unitary operator on \(\mathcal{H}_n\) that commutes with \(K\). The following remark demonstrates that the collection of all \(1-\)erasure optimal \(K-\)dual pairs with respect to the operator norm is invariant under the action of such unitary operators.

\begin{rem}\label{thm3point7}
Let \( U \) be a unitary operator on \( \mathcal{H}_n \) that commutes with \( K \), i.e., \( UK = KU \). Then a $K-$dual pair \( (F, G) \) is  $1-$erasure optimal with respect to the operator norm if and only if \( (UF, UG) \) is a $1-$erasure optimal $K-$dual pair with respect to the operator norm.
\end{rem}

\noindent	The proof of the above remark is straightforward, as ${\mathbb{O}}_{1} (F,G) = {\mathbb{O}}_{1} (UF,UG) .$

\vspace{1.5mm}

\subsection{Spectral Radius based optimality of $K-$Dual Pair}

Here we take the error measure \( \mathcal{M} \) to be the spectral radius \( \rho \) of the corresponding error operator. For an $(N,n)$ $K-$dual pair $(F,G)$ for $\mathcal{H}_n,$ let us define
:
\begin{align*}
	&r_{m} (F,G) :=\max \bigg\{\rho\left( (\Theta_{G}^K)^* D\Theta_{F}^K\right) : D \in \mathcal{D}_{m} \bigg\},\; 1 \leq m \leq N\\
	& \tilde{r}_{1}  := \inf \bigg\{ r_{1} (F,G) : \textit{$(F,G)$ \text{is an $(N,n)\;$  $K-$dual pair for} $\mathcal{H}_n$} \bigg\}\\
	&\mathcal{R}_{1} := \bigg\{ (F,G) : r_{1} (F,G) = \tilde{r}_{1} \bigg\}\\
	&\tilde{r}_{m} := \inf \bigg\{ r_{m} (F,G) : (F,G) \in \mathcal{R}_{m-1} \bigg\},\;\; 1< m \leq N\\
	& \mathcal{R}_{m} := \bigg\{(F,G)\in \mathcal{R}_{m-1} : r_{m} (F,G) =  \tilde{r}_{m} \bigg\}, \;\; 1 < m \leq N.
\end{align*}
The members of \( \mathcal{R}_{m} \), for \( 1 \leq m \leq N \), are referred to as \textit{$m-$erasure spectrally optimal \( K-\)dual pairs}. In the specific case when \( m = 1 \), it is straightforward to observe that if the erasure occurs at the \( i^{\text{th}} \) position, then the spectral radius of the error operator satisfies $\rho\left((\Theta_{G}^K)^* D \Theta_{F}^K\right) = |\langle f_i , g_i \rangle|,$ since the matrix \( (\Theta_{G}^K)^* D \Theta_{F}^K \) has eigenvalues \( 0 \) and \( \langle g_i, f_i \rangle \).
Therefore,
\begin{align}\label{equation3point1}
	r_{1} (F,G) = \max \bigg\{ |\langle f_i , g_i \rangle | : 1\leq i \leq N \bigg\} .
\end{align}
We now derive an useful and explicit expression for \( r_2(F, G) \), which quantifies the maxium error under two erasures. Consider
\begin{eqnarray*}
	r_{2} (F,G) = \max \left\{ \rho\left((\Theta_{G}^K)^* D\Theta_{F}^K \right) : D \in \mathcal{D}_{2} \right\} = \max \left\{ \rho \left(D\Theta_{F}^K (\Theta_{G}^K)^* \right) : D \in \mathcal{D}_{2} \right\}.
\end{eqnarray*}
By setting  $D = diag [0,0,\ldots,0,1,0,\ldots,0,1,0,\ldots,0],$ the matrix representation $\mathcal{A}$ of $D\Theta_{F}^K (\Theta_{G}^K)^*$ with respect to the standard orthonormal basis of $\mathbb{C}^N$ is

	$$	\mathcal{A} = \begin{pmatrix}
		0 & 0  & \cdots &0 & \cdots &0 & \cdots &0\\
		\vdots &\vdots  &\vdots &\vdots &\vdots&\vdots &\vdots &\vdots \\
		0 & 0  & \cdots &0 & \cdots &0 & \cdots &0 \\
		\alpha_{1i} & \alpha_{2i} & \cdots &\alpha_{ii} & \cdots &\alpha_{ji} & \cdots &\alpha_{Ni} \\
		0 & 0  & \cdots &0 & \cdots &0 & \cdots &0\\
		\vdots &\vdots  &\vdots &\vdots &\vdots&\vdots &\vdots &\vdots \\
		0 & 0  & \cdots &0 & \cdots &0 & \cdots &0 \\
		\alpha_{1j} & \alpha_{2j} & \cdots &\alpha_{ij} & \cdots &\alpha_{jj} & \cdots &\alpha_{Nj} \\
		0 & 0  & \cdots &0 & \cdots &0 & \cdots &0\\
		\vdots &\vdots  &\vdots &\vdots &\vdots&\vdots &\vdots &\vdots \\
		0 & 0  & \cdots &0 & \cdots &0 & \cdots &0 \\
	\end{pmatrix},$$
	where $\alpha_{ij}:= \langle g_i , f_j \rangle, \forall\,i,j. $
	\noindent
	The characteristic polynomial of the above matrix $\mathcal{A}$ is given by
	\begin{align*}
		\det(\mathcal{A}-xI) &= (-x)^{i-1}(-x)^{N-j} \left|\begin{array}{ccccccc}
			-x +  \alpha_{ii} &q_i\alpha_{(i+1) i} &\cdots &\alpha_{(j-1)i} &\alpha_{ji}\\
			0 & -x  &\cdots& 0 &0\\
			\vdots &\vdots  &\vdots &\vdots &\vdots \\
			0 & 0  &\cdots& -x &0\\
			\alpha_{ij} & \alpha_{(i+1)j} & \cdots & \alpha_{(j-1)j} & -x + \alpha_{jj} \\
		\end{array}\right| \\ &= (-x)^{N-2} \left((x- \alpha_{ii})(x-\alpha_{jj}) -  \alpha_{ij} \alpha_{ji} \right).
	\end{align*}
	\noindent
	Therefore, the eigenvalues of $\mathcal{A}$ are $0,\, \dfrac{ \alpha_{ii} + \alpha_{jj} \pm \sqrt{( \alpha_{ii}-  \alpha_{jj})^2 + 4 \alpha_{ij} \alpha_{ji} }}{2}. $ Hence,
	\begin{equation} \label{eqn3point3}
		r_{2} (F,G) = \max_{i\neq j} \left| \frac{ \alpha_{ii} + \alpha_{jj} \pm \sqrt{( \alpha_{ii} -  \alpha_{jj})^2 + 4 \alpha_{ij}\alpha_{ji} }}{2} \right|.
	\end{equation}
	The next theorem provides a complete characterization of when a \( K-\)dual pair is $1-$erasure spectrally optimal.

	\begin{thm} \label{thm5point2}
   Let $K \in \mathcal{B}(\mathcal{H}_n)$ be a positive semi-definite operator. Then, the value of $\tilde{r}_{1} $ is $\frac{tr(K)}{N}.$  Moreover, an $(N,n)$ dual pair $(F,G) \in \mathcal{R}_{1} $ if and only if it is a $1-$uniform $K-$dual pair.
		
	\end{thm}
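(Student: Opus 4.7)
The plan is to mirror the strategy used in Proposition~\ref{thm4point1} and Corollary~\ref{cor3point5}, exploiting the identity $r_1(F,G)=\max_i|\langle f_i,g_i\rangle|$ from \eqref{equation3point1} together with the trace identity $\operatorname{tr}(K)=\sum_{i=1}^N\langle f_i,g_i\rangle$ noted in Remark~\ref{rem3point2}. Since $K$ is positive semi-definite, $\operatorname{tr}(K)\ge 0$ is a nonnegative real number, which is the crucial ingredient that converts modulus equality into literal equality of inner products.

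First I would establish the lower bound $\tilde r_1\ge \operatorname{tr}(K)/N$. For any $(N,n)$ $K$-dual pair $(F,G)$, combine the trace identity with the triangle inequality:
\[
\operatorname{tr}(K)=\Bigl|\sum_{i=1}^N \langle f_i,g_i\rangle\Bigr|\le \sum_{i=1}^N|\langle f_i,g_i\rangle|\le N\max_{1\le i\le N}|\langle f_i,g_i\rangle|=N\,r_1(F,G),
\]
which gives $r_1(F,G)\ge \operatorname{tr}(K)/N$. For the matching upper bound, I would invoke the construction already given in the proof of Proposition~\ref{thm4point1}: the pair $(K^{1/2}F',K^{1/2}F')$, where $F'$ is the uniform Parseval frame with $\|f'_i\|^2=n/N$, is an $(N,n)$ $K$-dual pair satisfying $\|K^{1/2}f'_i\|^2=\operatorname{tr}(K)/N$ for every $i$. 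Since $\langle K^{1/2}f'_i,K^{1/2}f'_i\rangle=\|K^{1/2}f'_i\|^2$, equation \eqref{equation3point1} yields $r_1(K^{1/2}F',K^{1/2}F')=\operatorname{tr}(K)/N$, proving $\tilde r_1=\operatorname{tr}(K)/N$ and supplying a member of $\mathcal R_1$.

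Next I would prove the characterization. If $(F,G)\in\mathcal R_1$, then $\max_i|\langle f_i,g_i\rangle|=\operatorname{tr}(K)/N$. Arguing by contradiction exactly as in Proposition~\ref{thm4point1}, a strict inequality $|\langle f_j,g_j\rangle|<\operatorname{tr}(K)/N$ for some $j$ would force $\operatorname{tr}(K)<\operatorname{tr}(K)$; hence $|\langle f_i,g_i\rangle|=\operatorname{tr}(K)/N$ for every $i$. To upgrade this from an absolute value equality to the literal identity $\langle f_i,g_i\rangle=\operatorname{tr}(K)/N$ (which is what $1$-uniformity requires by Remark~\ref{rem3point2}), I would write $\langle f_i,g_i\rangle=a_i+ib_i$ with $a_i,b_i\in\mathbb{R}$. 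Positivity of $K$ makes $\operatorname{tr}(K)$ real and nonnegative, so $\sum_i a_i=\operatorname{tr}(K)$ and $\sum_i b_i=0$, while $\sqrt{a_i^2+b_i^2}=\operatorname{tr}(K)/N$ for all $i$. Then
\[
\operatorname{tr}(K)=\sum_{i=1}^N a_i\le \sum_{i=1}^N\sqrt{a_i^2+b_i^2}=\operatorname{tr}(K),
\]
forcing equality throughout; hence $b_i=0$ and $a_i=\operatorname{tr}(K)/N$ for every $i$, establishing $1$-uniformity. The converse is immediate: if $(F,G)$ is $1$-uniform, Remark~\ref{rem3point2} gives $\langle f_i,g_i\rangle=\operatorname{tr}(K)/N$ for all $i$, whence $r_1(F,G)=\operatorname{tr}(K)/N=\tilde r_1$, so $(F,G)\in\mathcal R_1$.

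There is no genuine obstacle since the argument is a direct spectral-radius analogue of the operator-norm case; the only subtle point is the upgrade from $|\langle f_i,g_i\rangle|=c$ to $\langle f_i,g_i\rangle=c$, and this is precisely where the positivity hypothesis on $K$ enters, ensuring $\operatorname{tr}(K)\in\mathbb{R}_{\ge 0}$ so that the real-part inequality above has equality forcing the imaginary parts to vanish.
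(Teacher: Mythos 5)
Your proposal is correct and follows essentially the same route as the paper: the lower bound via the trace identity and triangle inequality, the attainment via the pair $(K^{1/2}F',K^{1/2}F')$ constructed in Proposition~\ref{thm4point1}, and the upgrade from $|\langle f_i,g_i\rangle|=\operatorname{tr}(K)/N$ to $\langle f_i,g_i\rangle=\operatorname{tr}(K)/N$ by writing the inner products as $a_i+ib_i$ and forcing equality in $\sum a_i\le\sum\sqrt{a_i^2+b_i^2}$. No substantive differences.
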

	\begin{proof}
		For any $K-$dual pair $(F,G),$ $\sum\limits_{i=1}^N \left|\langle f_i, g_i \rangle \right| \geq \sum\limits_{i=1}^N \langle f_i, g_i \rangle = tr(K) >0  $ and hence by \eqref{equation3point1}, $r_{1}(F,G) \geq \frac{tr(K)}{N}.$ From the proof of Proposition~\ref{thm4point1}, it follows that there exists a Parseval frame \( F = \{f_i\}_{i=1}^N \) for \( \mathcal{H}_n \) such that \( \|f_i\|^2 = \frac{n}{N} \) for all \( i \), and the sequence \( \{K^{1/2} f_i\}_{i=1}^N \) constitutes a \( K-\)frame whose \( K-\)dual is given by \( K^{1/2} F \). Therefore,
		$\langle K^{\frac{1}{2}}f_i ,K^{\frac{1}{2}}f_i \rangle = \left\| K^{\frac{1}{2}} f_i \right\|^2 = \left\| K^\frac{1}{2} \right\|^2 \,\left\| f_i \right\|^2  = \frac{tr(K)}{N}$ and hence, $r_{1}\left(K^{\frac{1}{2}}F,K^{\frac{1}{2}} F \right) = \frac{tr(K)}{N}.$ Thus, $\tilde{r}_{1} = \dfrac{tr(K)}{N}.$
		
		\par Now, suppose $(F',G') \in \mathcal{R}_{1}.$ Then, $\max\limits_{1 \leq j \leq N} \; |\langle f'_j , g'_j \rangle | = \dfrac{tr(K)}{N}.$ If for any $j \in \{1,2,\ldots,N\},\;$ \;$|\langle f'_j , g'_j \rangle | < \dfrac{tr(K)}{N},$ then  $ tr(K) = \displaystyle{\sum_{j=1}^N \langle f'_j , g'_j \rangle \leq \sum_{j=1}^N |\langle f'_j , g'_j \rangle | < \sum_{j=1}^N \frac{tr(K)}{N}} = tr(K)  ,$ which is not possible. Therefore, $|\langle f'_j, g'_j \rangle| =\dfrac{tr(K)}{N},\, \forall\, j.$ It is enough to verify that $\langle f'_j, g'_j \rangle \geq 0,\,\forall\, j.$ Clearly,
		$$\sum\limits_{j=1}^N \langle f'_j , g'_j \rangle =tr(K)= \sum\limits_{j=1}^N |\langle f'_j , g'_j \rangle | .$$
		Let $\langle f'_j , g'_j \rangle = a_j + ib_j,\, 1\leq j \leq N,$ where $a_j,b_j \in \mathbb{R}.$ Then, $\sum\limits_{j=1}^N a_j = tr(K) = \sum\limits_{j=1}^N \sqrt{a_j^2 +b_j^2,}$ which is possible when $b_j = 0$  and also $a_j \geq 0,\, \forall\, j.$ Conversely, if a $K-$dual pair $(F',G')$ is  $1-$uniform , then by Remark \ref{rem3point2}, $r_{1}(F',G') = \frac{tr(K)}{N},$ which implies that $(F',G') \in \mathcal{R}_{1}.$
		
	\end{proof}

	\begin{rem}\label{Thm3point5}
	There exists a $K$-frame \( F \) in \( \mathcal{H}_n \) such that the pair \( (F, F) \in \mathcal{R}_{1} \).
\end{rem}

\begin{rem}
	The preceding theorem guarantees the existence of an optimal $K-$dual pair that minimize both the spectral radius and the operator norm of the error operator, even when considering the entire class of $K-$dual pairs.
\end{rem}

	We now present a characterization of  spectrally optimal dual pairs for two erasures. Recall that the spectral radius for two erasures is given by
\[
r_{2}(F,G) =  \max\limits_{i\neq j} \left| \dfrac{ \alpha_{ii} + \alpha_{jj} \pm \sqrt{( \alpha_{ii} -  \alpha_{jj})^2 + 4  \alpha_{ij}\alpha_{ji} }}{2} \right|.
\]
Moreover, if \( (F,G) \in \mathcal{R}_{1} \), then by Theorem~\ref{thm5point2}, the expression for \( r_{2}(F,G) \) simplifies to
\begin{align}\label{eqn3point4*}
	r_{2}(F,G) = \max_{i\neq j} \left| \dfrac{\operatorname{tr}(K)}{N} \pm \sqrt{\alpha_{ij}\alpha_{ji}} \right| = \max_{i\neq j} \left| \dfrac{\operatorname{tr}(K)}{N} + \sqrt{\alpha_{ij}\alpha_{ji}} \right|.
\end{align}
Here, the second equality holds adopting the standard convention for square roots of complex numbers \( \alpha + i\beta \) are expressed as \( \pm(a + ib) \), with \( a \geq 0 \).

	\begin{thm}\label{theorem3point5twoerasure}
Let \( \mathcal{H}_n \) be a real Hilbert space of dimension \( n \) and let \( K \in \mathcal{B}(\mathcal{H}_n) \) be a positive semi-definite operator. Then the minimal spectral radius over all $K-$dual pairs for two erasures satisfies
\[
\tilde{r}_{2} \geq 
\begin{cases}
\dfrac{\operatorname{tr}(K)}{N} + \sqrt{\dfrac{N\,\operatorname{tr}(K^2) - \left(\operatorname{tr}(K)\right)^2}{N^2(N-1)}}, & \text{if } \operatorname{tr}(K^2) \geq \dfrac{\left(\operatorname{tr}(K)\right)^2}{N},\\~\\
\sqrt{\dfrac{(N-2)\left(\operatorname{tr}(K)\right)^2 + N\,\operatorname{tr}(K^2)}{N^2(N-1)}}, & \text{if } \operatorname{tr}(K^2) < \dfrac{\left(\operatorname{tr}(K)\right)^2}{N}.
\end{cases}
\]

Moreover, if there exists a  $2-$uniform \( K-\)dual pair, then the inequality above becomes an equality:
\begin{equation}\label{equation7expression}
\tilde{r}_{2} = 
\begin{cases}
\dfrac{\operatorname{tr}(K)}{N} + \sqrt{\dfrac{N\,\operatorname{tr}(K^2) - \left(\operatorname{tr}(K)\right)^2}{N^2(N-1)}}, & \text{if } \operatorname{tr}(K^2) \geq \dfrac{\left(\operatorname{tr}(K)\right)^2}{N},\\~\\
\sqrt{\dfrac{(N-2)\left(\operatorname{tr}(K)\right)^2 + N\,\operatorname{tr}(K^2)}{N^2(N-1)}}, & \text{if } \operatorname{tr}(K^2) < \dfrac{\left(\operatorname{tr}(K)\right)^2}{N}.
\end{cases}
\end{equation}
In this case, a dual pair \( (F, G) \in \mathcal{R}_2 \) if and only if it is a  $2-$uniform \( K \)-dual pair.
\end{thm}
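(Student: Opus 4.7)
The approach will be to reduce the problem to the $1$-uniform setting via Theorem~\ref{thm5point2}: since $\tilde{r}_{2}$ is the infimum of $r_{2}(F,G)$ over $\mathcal{R}_{1}$, every pair under consideration has $\langle g_{i}, f_{i}\rangle = \operatorname{tr}(K)/N$ for each $i$. The expression \eqref{eqn3point4*} then simplifies to $r_{2}(F,G) = \max_{i\neq j}\bigl|\operatorname{tr}(K)/N + \sqrt{\alpha_{ij}\alpha_{ji}}\bigr|$, where I write $\alpha_{ij} := \langle g_{i}, f_{j}\rangle$; the whole problem is thus controlled by the $N(N-1)$ off-diagonal products $s_{ij} := \alpha_{ij}\alpha_{ji}$.

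Next I would extract a second-moment identity for the $s_{ij}$. Setting $M := \Theta_{G}^{K}(\Theta_{F}^{K})^{*}$, one has $M_{ij} = \alpha_{ij}$ in the real case, while the identity $(\Theta_{F}^{K})^{*}\Theta_{G}^{K} = K$ gives $M^{2} = \Theta_{G}^{K} K (\Theta_{F}^{K})^{*}$, whose trace equals $\operatorname{tr}(K^{2})$ by cyclicity of trace. Expanding $\operatorname{tr}(M^{2}) = \sum_{i,j}\alpha_{ij}\alpha_{ji}$ and subtracting the diagonal contribution $\sum_{i}\alpha_{ii}^{2} = (\operatorname{tr}(K))^{2}/N$ forced by $1$-uniformity, then dividing by $N(N-1)$, yields the average
\[
\mu \;:=\; \frac{N\operatorname{tr}(K^{2}) - (\operatorname{tr}(K))^{2}}{N^{2}(N-1)}
\]
of the off-diagonal products, whose sign is precisely what separates the two cases of the theorem.

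The lower bound then follows by combining the elementary inequality $\max_{i\neq j} s_{ij} \geq \mu$ with the pointwise estimate $|\operatorname{tr}(K)/N + \sqrt{s}|^{2} \geq (\operatorname{tr}(K)/N)^{2} + s$, valid for every real $s$ under the branch convention fixed below \eqref{eqn3point4*} (the cases $s \geq 0$ and $s < 0$ are verified separately). When $\mu \geq 0$ the positive branch gives the sharper form $r_{2}(F,G) \geq \operatorname{tr}(K)/N + \sqrt{\mu}$, matching the first stated bound; when $\mu < 0$ the pointwise inequality applied to the maximal term yields $r_{2}(F,G)^{2} \geq (\operatorname{tr}(K)/N)^{2} + \mu$, and a routine algebraic rearrangement rewrites this as the second stated bound.

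For the equality clause, substituting a $2$-uniform pair with common off-diagonal value $c'$ into the moment identity of the second paragraph forces $c' = \mu$, after which direct substitution into $r_{2}(F,G)$ confirms that the lower bound is attained. For the converse, namely $(F,G) \in \mathcal{R}_{2} \Rightarrow 2$-uniformity, I would argue that tightness of the bound forces the averaging inequality $\max_{i\neq j} s_{ij} \geq \mu$ to be an equality, and then the sum constraint $\sum_{i\neq j} s_{ij} = N(N-1)\mu$ forces each $s_{ij}$ to equal $\mu$. The main obstacle I anticipate is the rigorous equality analysis in the second case ($\mu < 0$), where the map $s \mapsto |\operatorname{tr}(K)/N + \sqrt{s}|^{2}$ is only piecewise monotone and its behavior changes at $s=0$; one has to exploit the branch convention carefully to exclude configurations in which some $s_{ij}$ are negative while others compensate, so that the $2$-uniform configuration is the unique extremal one.
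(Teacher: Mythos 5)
Your overall skeleton coincides with the paper's: restrict to $\mathcal{R}_1$ via Theorem~\ref{thm5point2}, compute $\sum_{i,j}\alpha_{ij}\alpha_{ji}=\operatorname{tr}(K^2)$, subtract the diagonal contribution $(\operatorname{tr}(K))^2/N$ forced by $1$-uniformity to obtain the off-diagonal average $\mu_{\mathrm{avg}}=\frac{N\operatorname{tr}(K^2)-(\operatorname{tr}(K))^2}{N^2(N-1)}$, run an averaging argument through \eqref{eqn3point4*}, and close the equality clause by showing that a $2$-uniform pair has every off-diagonal product equal to $\mu_{\mathrm{avg}}$. The $\mu_{\mathrm{avg}}\geq 0$ branch and the converse implication ($(F,G)\in\mathcal{R}_2$ forces all $s_{ij}$ equal via the sum constraint) are handled exactly as in the paper.

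The gap is in the $\mu_{\mathrm{avg}}<0$ branch. You bound $r_2$ from below using the \emph{maximal} off-diagonal product together with the pointwise estimate $|c+\sqrt{s}|^2\geq c^2+s$ (with $c=\operatorname{tr}(K)/N$), obtaining $r_2^2\geq c^2+\mu_{\mathrm{avg}}$. This inequality is true but strictly weaker than what is needed: a $2$-uniform pair has all $s_{ij}=\mu_{\mathrm{avg}}<0$ and therefore attains $r_2=|c+\sqrt{\mu_{\mathrm{avg}}}|=\sqrt{c^2-\mu_{\mathrm{avg}}}$, which is strictly \emph{larger} than $\sqrt{c^2+\mu_{\mathrm{avg}}}$. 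Hence your lower bound is not attained by the $2$-uniform pair, and your equality clause and the characterization of $\mathcal{R}_2$ collapse in this case. The paper instead uses the \emph{minimal} off-diagonal product: a negative average guarantees a pair with $s_{i_1j_1}\leq\mu_{\mathrm{avg}}<0$, and for that pair $|c+\sqrt{s_{i_1j_1}}|=\sqrt{c^2-s_{i_1j_1}}\geq\sqrt{c^2-\mu_{\mathrm{avg}}}$, which is exactly the value a $2$-uniform pair saturates. (You may have been misled by the displayed formula in the theorem statement, which equals $c^2+\mu_{\mathrm{avg}}$ and is inconsistent with the value $\sqrt{\tfrac{(\operatorname{tr}(K))^2-\operatorname{tr}(K^2)}{N(N-1)}}=\sqrt{c^2-\mu_{\mathrm{avg}}}$ derived inside the paper's own proof; only the latter is consistent with attainment by a $2$-uniform pair.) Replacing the max-based estimate by the min-based one in the negative case also dissolves the obstacle you flag at the end: in the converse, any $s_{k\ell}$ strictly below $\mu_{\mathrm{avg}}$ pushes $r_2$ strictly above the optimum, so all $s_{ij}\geq\mu_{\mathrm{avg}}$, and the sum constraint $\sum_{i\neq j}s_{ij}=N(N-1)\mu_{\mathrm{avg}}$ then forces equality throughout.
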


	\begin{proof}
		Suppose \( (F, G) \) is an \( (N, n) \) dual pair. Taking \( \alpha_{ij} = \langle g_i, f_j \rangle \), it is easy to see that
\[
\sum_{i,j = 1}^N \alpha_{ij} \alpha_{ji} 
= \sum_{i=1}^N \left\langle g_i , \sum_{j=1}^N \langle f_i , g_j \rangle f_j \right\rangle 
= \sum_{i=1}^N \langle g_i , K f_i \rangle 
= \operatorname{tr}(\Theta_G^* \Theta_{K F}) 
= \operatorname{tr}\left({K^*}^2\right) 
= \operatorname{tr}(K^2).
\]
 Further, if $(F,G)$ is $1-$uniform $K-$dual pair, then
		\begin{align}\label{eqation7}
			\displaystyle{\sum_{i \neq j}\alpha_{ij}\alpha_{ji}} = tr\left({K^*}^2  \right)-\sum\limits_{i=1}^N \alpha_{ii}^2 = tr\left({K}^2  \right) - \frac{(tr(K))^2}{N}.
		\end{align}	
		Let  $\mu :=  tr\left({K}^2  \right) - \dfrac{(tr(K))^2}{N}.$ In the following, we derive a lower bound for \( \tilde{r}_{2} \), which varies according to the sign of \( \mu \).

		\par Suppose $\mu \geq 0.$ 	For $(F,G)\in \mathcal{R}_{1}, $   as in (\ref{eqn3point4*}), we have	$r_{2} (F,G) = \max\limits_{i \neq j} \left| \frac{tr(K)}{N}+ \sqrt{ \alpha_{ij}\alpha_{ji}} \right|.$
		Therefore, there exists $i_0 \neq j_0$ such that $\alpha_{i_0 j_0} \alpha_{j_0 i_0} \geq \dfrac{\sum\limits_{i \neq j}\alpha_{ij} \alpha_{ji}}{N(N-1)} = \dfrac{\mu}{N(N-1)} \geq 0,$ by Theorem \ref{thm5point2} and \eqref{eqation7}. Therefore, we obtain
\[
r_{2}(F,G) \geq \left| \dfrac{\operatorname{tr}(K)}{N} + \sqrt{ \alpha_{i_0 j_0} \alpha_{j_0 i_0} } \right| \geq \dfrac{\operatorname{tr}(K)}{N} + \sqrt{ \dfrac{\mu}{N(N-1)} } = \dfrac{\operatorname{tr}(K)}{N} + \sqrt{ \dfrac{N\,\operatorname{tr}(K^2) - \left( \operatorname{tr}(K) \right)^2 }{N^2(N-1)} }.
\]
Since the right-hand side of the inequality does not depend on the choice of the dual pair \( (F,G) \), it gives a lower bound for \( \tilde{r}_{2} \). On the other hand, suppose $\mu <0.$ Then, by \eqref{eqation7}, for any $K-$dual pair $(F,G),\;$ $\min\limits_{i \neq j} \alpha_{ij}\alpha_{ji} \leq \dfrac{\mu}{N(N-1)}.$  Therefore, there exists $i_1 \neq j_1$ such that $\alpha_{i_1 j_1} \alpha_{j_1 i_1} \leq \dfrac{\sum\limits_{i \neq j}\alpha_{ij} \alpha_{ji}}{N(N-1)} = \dfrac{\mu}{N(N-1)} \leq 0,$ by Theorem \ref{thm5point2} and \eqref{eqation7}. So, 
$$r_{2} (F,G) \geq  \left| \dfrac{tr(K)}{N} + \sqrt{ \alpha_{i_1 j_1}\alpha_{j_1 i_1}} \right| =\sqrt{\left(\dfrac{tr(K)}{N}\right)^2 - \alpha_{i_1 j_1}\alpha_{j_1 i_1}}\geq \sqrt{\left(\dfrac{tr(K)}{N}\right)^2 - \dfrac{\mu}{N(N-1)} }  = \sqrt{\dfrac{(tr(K))^2 -  tr(K^2)}{N(N-1)}}.$$

		\par 	Assume that there exists a $2$-uniform $K$-dual pair $(F'', G'')$. By applying equation~\eqref{eqn3point4*}, we obtain
\[
r_2(F'', G'') = \left| \dfrac{\operatorname{tr}(K)}{N} + \sqrt{c_{F'', G''}} \right|,
\]
where \( c_{F'', G''} \) denotes the constant arising from the $2$-uniform property of the pair \( (F'', G'') \). This constant also satisfies the identity
\[
\sum_{i \neq j} \alpha''_{ij} \alpha''_{ji} = N(N-1) c_{F'', G''},
\]
which leads to
\[
c_{F'', G''} = \dfrac{\mu}{N(N-1)}.
\]
Based on the sign of \( \mu \), we obtain the following expression for \( \tilde{r}_2 \):
\[
\tilde{r}_2 =
\begin{cases}
\dfrac{\operatorname{tr}(K)}{N} + \sqrt{\dfrac{N\,\operatorname{tr}(K^2) - (\operatorname{tr}(K))^2}{N^2(N-1)}} & \text{if } \operatorname{tr}(K^2) \geq \dfrac{(\operatorname{tr}(K))^2}{N}, \\[2em]
\sqrt{\dfrac{(N-2)(\operatorname{tr}(K))^2 + N\,\operatorname{tr}(K^2)}{N^2(N-1)}} & \text{if } \operatorname{tr}(K^2) < \dfrac{(\operatorname{tr}(K))^2}{N}.
\end{cases}
\]
From this, it can be concluded that \eqref{equation7expression} holds.
\par From the arguments above, we infer that any  $2-$uniform $K-$dual pair $(F,G)$ will satisfy ${r}_{2} (F,G) = \tilde{r}_{2}$ and hence will belong to $\mathcal{R}_{2},$ by Theorem \ref{thm5point2}.	 Conversely, if $(F,G) \in \mathcal{R}_{2},$ then it is a $1-$uniform $K-$dual pair, as it is also in $\mathcal{R}_{1}.$ We shall show that $\alpha_{k\ell}\alpha_{\ell k} = \dfrac{\mu}{N(N-1)},\,\forall\;k \neq \ell.$ Let us first consider the case when  $\mu \geq 0.$ Suppose for some $k \neq \ell,\, \alpha_{k\ell}\alpha_{\ell k} >  \dfrac{\mu}{N(N-1)}.$ Then, using \eqref{eqn3point4*},
		$${r}_{2} (F,G) \geq  \left| \dfrac{tr(K)}{N}+ \sqrt{ \alpha_{k\ell}\alpha_{\ell k}}   \right| > \dfrac{tr(K)}{N} + \sqrt{ \frac{\mu}{N(N-1)}} = \tilde{r}_{2},$$
		which contradicts the fact that $(F,G) \in \mathcal{R}_{2}.$ Therefore, $ \alpha_{k\ell}\alpha_{\ell k} \leq \dfrac{\mu}{N(N-1)},$	for all $k \neq \ell. $ If $\alpha_{k_0 \ell_0}\alpha_{\ell_0 k_0} < \dfrac{\mu}{N(N-1)},$ for some $k_0 \neq \ell_0,$ then $\mu =\displaystyle{\sum_{k \neq \ell}\alpha_{k \ell}\alpha_{\ell k} < \frac{\mu}{N(N-1)} N(N-1)} = \mu,$ thereby proving our claim. A similar argument can be applied to handle the case when \( \mu < 0 \).
	\end{proof}
	
	\noindent A proposition analogous to Theorem \ref{thm3point7} states as follows.
	\begin{prop}
Let \( U \) be a unitary operator on \( \mathcal{H}_n \) satisfying \( KU = UK \). Then, for any \((N,n)\) \( K \)-dual pair \( (F, G) \), we have \( (F, G) \in \mathcal{R}_i \) if and only if \( (UF, UG) \in \mathcal{R}_i \), for \( i = 1, 2 \).
\end{prop}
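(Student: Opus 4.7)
The proof plan rests on a single structural observation: conjugation by $U$ converts the error operator associated with $(F,G)$ into the error operator associated with $(UF, UG)$, thereby preserving both operator norm and spectral radius for every erasure pattern simultaneously.

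First, I would verify that the map $(F,G) \mapsto (UF, UG)$ is a bijection on the set of $(N,n)$ $K$-dual pairs, with inverse $(F,G) \mapsto (U^*F, U^*G)$. The commutation hypothesis $UK = KU$, equivalently $UKU^* = K$, is exactly what makes $(UF, UG)$ a $K$-dual pair whenever $(F,G)$ is: the reconstruction identity becomes
$$\sum_{i=1}^N \langle f, Ug_i \rangle Uf_i \;=\; U\!\left(\sum_{i=1}^N \langle U^*f, g_i \rangle f_i\right) \;=\; U K U^* f \;=\; K f,$$
and a symmetric calculation handles the dual condition ensuring $UF$ is a $K^*$-dual of $UG$.

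Second, I would establish the factorization identities $\Theta_{UF}^K = \Theta_F^K U^*$ and $(\Theta_{UG}^K)^* = U(\Theta_G^K)^*$, which follow immediately from the definitions of the analysis and synthesis operators. Combining them yields the key identity
$$(\Theta_{UG}^K)^* D \,\Theta_{UF}^K \;=\; U\bigl((\Theta_G^K)^* D \,\Theta_F^K\bigr) U^*$$
for every diagonal matrix $D \in \mathcal{D}_m$. Thus the error operator for $(UF, UG)$ at every erasure pattern is unitarily equivalent to the corresponding one for $(F,G)$, and in particular $\rho\bigl((\Theta_{UG}^K)^* D \Theta_{UF}^K\bigr) = \rho\bigl((\Theta_G^K)^* D \Theta_F^K\bigr)$ for every such $D$. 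Taking the maximum over $\mathcal{D}_m$ gives $r_m(UF, UG) = r_m(F,G)$ for all $m$.

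Third, I would combine the pointwise invariance $r_m(UF,UG) = r_m(F,G)$ with the bijectivity from step one to conclude that $\tilde{r}_1$, being an infimum taken over the full set of $(N,n)$ $K$-dual pairs, is the same whether indexed by $(F,G)$ or by $(UF, UG)$. The equivalence $(F,G) \in \mathcal{R}_1 \iff (UF,UG) \in \mathcal{R}_1$ then follows by unwinding the definition. For $i=2$ one first uses the $i=1$ equivalence to see that the map restricts to a bijection on $\mathcal{R}_1$, so that $\tilde{r}_2$ is likewise invariant, after which the equality $r_2(F,G) = r_2(UF, UG)$ gives the claim.

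I do not anticipate any serious obstacle. The only point meriting care is the invariance of $\tilde{r}_m$: the pointwise identity $r_m(F,G) = r_m(UF,UG)$ is not by itself enough for an if-and-only-if statement, and one must additionally observe that $(F,G) \mapsto (UF,UG)$ is a bijection on the precise set over which the infimum is taken, which is exactly where the commutation assumption $UK=KU$ is used a second time.
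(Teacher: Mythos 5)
Your proposal is correct and follows exactly the route the paper intends: the paper omits the proof, declaring it analogous to Remark~\ref{thm3point7}, whose justification is precisely the invariance of the error measure under the conjugation $(\Theta_{UG}^K)^* D \,\Theta_{UF}^K = U\bigl((\Theta_G^K)^* D \,\Theta_F^K\bigr) U^*$. Your additional care in noting that $(F,G)\mapsto(UF,UG)$ must be a bijection on the exact set over which each infimum $\tilde{r}_m$ is taken (first on all $K$-dual pairs, then on $\mathcal{R}_1$) is a detail the paper leaves implicit, and it is handled correctly.
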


%

	\section{\textbf{Optimal $K-$duals associated with a given $K-$frame}}
    
	In the preceding section, we examined the notion of optimality by considering all possible $K-$dual pairs in the Hilbert space $\mathcal{H}_n$. In contrast, the present section focuses on a fixed $K-$frame and explores the optimality by varying over all its possible $K-$duals. As before, we investigate this optimality with respect to two different error measures, namely the operator norm and the spectral radius.

	\subsection{Optimal $K-$duals with respect to operator norm}
	
Let $K \in \mathcal{B}(\mathcal{H}_n)$. Given a frame $F = \{f_i\}_{i=1}^N$ for $\mathcal{H}_n$, we define
\[
\widetilde{\mathbb{O}}_{1}(F) := \inf \left\{ \mathbb{O}_{1}(F, G) : G \text{ is a $K-$dual of } F \right\},
\]
A dual frame \( G' \) of \( F \) is said to be a \textit{$1-$erasure optimal $K$-dual of $F$ under the operator norm} if
\[
\mathbb{O}_{1}(F, G') = \widetilde{\mathbb{O}}_{1}(F),
\]
where \( \mathbb{O}_{1}(F, G) \) is defined as in Subsection~3.1.

The following proposition gives an adequate condition ensuring that the canonical $K-$dual of $F$ attains this optimality. To that end, define
\[
L := \max \left\{ \|f_i\| \cdot \|K^\dag f_i\| : 1 \leq i \leq N \right\},
\]
and set
\[
I_1 := \left\{ i : \|f_i\| \cdot \|K^\dag f_i\| = L \right\}, \quad I_2 := \{1,2,\ldots,N\} \setminus I_1.
\]
Let $V_j := \mathrm{span}\{ f_i : i \in I_j \}$ for $j = 1,2$. The sufficient condition for optimality will be expressed in terms of these subspaces.

\begin{prop}\label{prop4point1}
Let $F = \{f_i\}_{i=1}^N$ be a Parseval $K-$frame for the Hilbert space $\mathcal{H}_n$, where $K \in \mathcal{B}(\mathcal{H}_n)$. Suppose the subspaces $V_1$ and $V_2$, be as before, satisfy $V_1 \cap V_2 = \{0\}.$ Then the canonical $K-$dual of $F$ achieves $1-$erasure optimality with respect to the operator norm.
\end{prop}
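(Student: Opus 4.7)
My plan is to proceed by contradiction. Assume some \(K-\)dual \(G=\{g_i\}_{i=1}^N\) of \(F\) achieves \(\mathbb{O}_{1}(F,G)<L\); then \(\|f_i\|\,\|g_i\|<L\) for every \(i\). Since formula \eqref{equation4} gives \(\mathbb{O}_{1}(F,K^\dag F)=\max_i\|f_i\|\,\|K^\dag f_i\|=L\), it suffices to derive a contradiction from this assumption. Every \(K-\)dual of the Parseval \(K-\)frame \(F\) has the form \(g_i = K^\dag f_i + u_i\), where the correction \(\{u_i\}\) is obtained by subtracting the two reconstruction identities for \(K\) and therefore satisfies \(\sum_{i=1}^N \langle f, u_i\rangle f_i = 0\) for every \(f\in\mathcal{H}_n\). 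In operator form this reads \(\Phi := \sum_{i=1}^N f_i\otimes u_i = 0\), where \((a\otimes b)(x):=\langle x,b\rangle a\).

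The argument then unfolds in three short steps. First, for each \(i\in I_1\) the strict pointwise bound \(\|g_i\|<\|K^\dag f_i\|\) expands into
\[
2\,\mathrm{Re}\langle K^\dag f_i,u_i\rangle+\|u_i\|^2 < 0.
\]
Second, I decouple the global constraint \(\Phi=0\) using the hypothesis. Rewrite
\[
\sum_{i\in I_1} f_i\otimes u_i \;=\; -\sum_{i\in I_2} f_i\otimes u_i;
\]
the left-hand operator has range contained in \(V_1\) and the right-hand operator has range contained in \(V_2\), so the assumption \(V_1\cap V_2=\{0\}\) forces both sides to vanish. With \(T_1\) and \(S_1\) denoting the synthesis operators of \(\{f_i\}_{i\in I_1}\) and \(\{u_i\}_{i\in I_1}\) respectively, this reads \(T_1 S_1^{*}=0\). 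Third, cyclicity of trace yields
\[
\sum_{i\in I_1}\langle K^\dag f_i,u_i\rangle \;=\; \mathrm{tr}(S_1^{*}K^\dag T_1) \;=\; \mathrm{tr}(K^\dag T_1 S_1^{*}) \;=\; 0,
\]
so summing the step-one inequality over the nonempty set \(I_1\) (nonempty because \(L\) is attained) gives \(0 > 2\,\mathrm{Re}\sum_{i\in I_1}\langle K^\dag f_i,u_i\rangle + \sum_{i\in I_1}\|u_i\|^2 = \sum_{i\in I_1}\|u_i\|^2 \geq 0\), the required contradiction.

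The principal obstacle is the decoupling step: recognizing that the transversality hypothesis \(V_1\cap V_2=\{0\}\) is precisely what converts the single global biorthogonality relation \(\Phi=0\) into an independent relation supported on \(I_1\) alone. Once that separation is in hand, the trace cancellation on \(I_1\) and the strict pointwise inequality combine essentially automatically to close the argument.
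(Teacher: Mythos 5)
Your proposal is correct and follows essentially the same route as the paper's proof: parametrize $G$ as $\{K^\dag f_i+u_i\}$, use $V_1\cap V_2=\{0\}$ to decouple the biorthogonality relation onto $I_1$, apply cyclicity of the trace to get $\sum_{i\in I_1}\langle K^\dag f_i,u_i\rangle=0$, and conclude via $\sum_{i\in I_1}\|u_i\|^2\leq 0$. The only cosmetic difference is that you assume a strict inequality $\mathbb{O}_1(F,G)<L$ and contradict it directly, whereas the paper assumes $\mathbb{O}_1(F,G)\leq L$ and deduces $u_i=0$ on $I_1$, hence equality.
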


	\begin{proof}
		We proceed by proof of contradiction. Suppose there exists a $K-$dual $G = \{g_i\}_{i=1}^N $ of $F$  such that  $\mathbb{O}_{1}(F,G) \leq \mathbb{O}_{1} (F, K^\dag F) .$ Then by \eqref{equation4}, for $i \in I_1,\;	 \|f_i \|\,\|g_i \| \leq \max\limits _{1\leq i \leq N} \|f_i \|\,\|g_i \| \leq L =  \|f_i \| \left\| K^\dag f_i \right\| .$ We can express the dual $G$ as $ \{ K^\dag f_i + u_i \}_{i=1}^N ,$  where $\sum\limits_{i=1}^N \langle f,u_i \rangle f_i =0,$ for all $f \in \mathcal{H}_n.$
		So, we may write $\|K^\dag f_i + u_i \|^2 \leq \|K^\dag f_i \|^2,$ for all $i \in I_1,$ which in turn leads to
		\begin{align}\label{eqn4point9}
			\sum_{i \in I_1} \| u_i \|^2 + 2 Re \bigg( \sum_{i \in I_1} \langle  K^\dag f_i ,u_i \rangle \bigg) \leq 0 .
		\end{align}
		Now, the condition $\sum\limits_{i=1}^N  \langle f,u_i \rangle f_i =0,\,f \in \mathcal{H}_n$ can be rewritten as $\sum\limits_{i \in I_1}  \langle f,u_i \rangle f_i + \sum\limits_{i \in I_2}  \langle f,u_i \rangle f_i = 0.$ As $V_1 \cap V_2 = \{0\},$  we get $\sum\limits_{i \in I_1} \langle  f , u_i \rangle f_i = 0, \text{for all} \;f \in \mathcal{H}_n .$ In other words, taking $U_1 = \{ u_i\}_{ i \in I_1}$ and  $F_1 = \{ f_i\}_{ i \in I_1},$ we have  $\Theta_{F_1}^*\Theta_{U_1} = 0.$ So, $\sum\limits_{i \in I_1} \langle  K^\dag f_i ,u_i \rangle =  tr\left(\Theta_{U_1} \Theta_{K^\dag F_1}^*  \right) = tr\left( \Theta_{K^\dag F_1}^*\Theta_{U_1}  \right) = tr \left(K^\dag \Theta_{ F_1}^*\Theta_{U_1}  \right)= 0.$  Thus, from \eqref{eqn4point9}, we have $ \sum\limits_{i \in I_1} \| u_i \|^2 \leq  0,$ which gives $u_i =0,\,\forall i \in I_1.$ Therefore, $\mathbb{O}_{1}(F,G) \geq \max\limits_{i \in I_1}\|f_i \|\,\|g_i \| = \max\limits_{i \in I_1}\|f_i \|\,\|k^\dag f_i \| = L = \mathbb{O}_{1}(F,K^\dag F).$  In other words, $\mathbb{O}_{1}(F,G) = \mathbb{O}_{1}(F,K^\dag F)$ and hence $K^\dag F$ is a $1-$erasure  optimal $K-$dual of $F.$
		
	\end{proof}

\begin{thm}\label{theorem4point3op}
Let $F = \{f_i\}_{i=1}^N$ be a Parseval $K-$frame for $\mathcal{H}_n$, where $K$ is a positive semi-definite operator on $\mathcal{H}_n$. Suppose that $V_1 \cap V_2 = \{0\}$ and that the subset $\{f_i\}_{i \in \Lambda_1}$ is linearly independent. Then, the canonical $K-$dual of $F$ is a $1-$erasure optimal $K-$dual of $F$ with respect to the operator norm. Moreover, if $N > n$, the collection of all $1-$erasure optimal $K-$duals of $F$ under the operator norm is uncountable.
\end{thm}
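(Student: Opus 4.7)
The first assertion, that $K^\dagger F$ is $1$-erasure optimal with respect to the operator norm, is an immediate consequence of Proposition~\ref{prop4point1}, since the hypothesis $V_1 \cap V_2 = \{0\}$ is precisely what is required there. The linear-independence hypothesis on $\{f_i\}_{i \in I_1}$ and $N>n$ are not needed for this part; they enter only in the uncountability statement.

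For the uncountability claim, my plan is to exhibit a one-parameter family of distinct optimal $K$-duals. Any $K$-dual of $F$ has the form $G = \{K^\dagger f_i + u_i\}_{i=1}^N$, where $\{u_i\}$ must satisfy the biorthogonality relation $\sum_{i=1}^N \langle f, u_i\rangle f_i = 0$ for every $f \in \mathcal{H}_n$. The proof of Proposition~\ref{prop4point1} in fact establishes that any optimal $G$ must have $u_i = 0$ for each $i \in I_1$. Thus the construction reduces to finding uncountably many collections $\{u_i\}_{i \in I_2}$ such that (i) $\sum_{i \in I_2} \langle f, u_i\rangle f_i = 0$ for all $f$, and (ii) $\|f_i\|\,\|K^\dagger f_i + u_i\| \leq L$ for every $i \in I_2$.

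The critical step is to observe that, under the hypotheses, $\{f_i\}_{i \in I_2}$ must be linearly dependent. Indeed, since $\{f_i\}_{i \in I_1}$ is linearly independent we have $\dim V_1 = |I_1|$; combined with $V_1 \cap V_2 = \{0\}$ inside $\mathcal{H}_n$ this gives $|I_1| + \dim V_2 \leq n$, and then $N > n$ yields $|I_2| = N - |I_1| > n - |I_1| \geq \dim V_2$. Hence there is a nontrivial linear relation $\sum_{i \in I_2} c_i f_i = 0$ with $(c_i)_{i \in I_2} \neq 0$. Fix such coefficients and any nonzero $v \in \mathcal{H}_n$, and define, for $\epsilon > 0$,
\[
u_i^{(\epsilon)} := \epsilon\,\overline{c_i}\,v \text{ for } i \in I_2, \qquad u_i^{(\epsilon)} := 0 \text{ for } i \in I_1.
\]
Then $\sum_{i \in I_2} \langle f, u_i^{(\epsilon)}\rangle f_i = \epsilon\,\langle f, v\rangle \sum_{i \in I_2} c_i f_i = 0$, so (i) holds automatically.

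For (ii) I would use continuity: for every $i \in I_2$ one has the strict inequality $\|f_i\|\,\|K^\dagger f_i\| < L$, so there exists $\epsilon_0 > 0$ such that $\|f_i\|\,\|K^\dagger f_i + u_i^{(\epsilon)}\| < L$ for all $i \in I_2$ and all $0 < \epsilon < \epsilon_0$. For such $\epsilon$, the dual $G_\epsilon := \{K^\dagger f_i + u_i^{(\epsilon)}\}_{i=1}^N$ is therefore $1$-erasure optimal. Fixing an index $i_0 \in I_2$ with $c_{i_0} \neq 0$ shows that $\epsilon \mapsto G_\epsilon$ is injective on $(0,\epsilon_0)$, producing uncountably many distinct optimal $K$-duals. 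The main (and only mild) obstacle is the dimension count forcing $\{f_i\}_{i \in I_2}$ to be linearly dependent; once that is in hand, everything else is a routine biorthogonality-plus-perturbation argument.
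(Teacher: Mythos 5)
Your proof is correct and follows essentially the same route as the paper's: both parts reduce the first claim to Proposition~\ref{prop4point1} and then obtain uncountably many optimal duals as a one-parameter perturbation family $\{K^\dagger f_i + t u_i\}_{i=1}^N$ supported on $I_2$, kept optimal for small $t$ by continuity together with the strict inequality $\|f_i\|\,\|K^\dagger f_i\| < L$ on $I_2$. Your dimension count forcing $\{f_i\}_{i\in I_2}$ to be linearly dependent explicitly constructs a nontrivial perturbation direction vanishing on $I_1$, a point the paper's proof only asserts, so your write-up is if anything slightly more complete.
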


\begin{proof}
	First part follows directly from Proposition\ref{prop4point1}.
	\par  Suppose $N>n.$ Then there exists a $K-$dual $G$ of $F$ other than the canonical dual. In other words, there exists a $K-$dual $G=\{g_i\}_{i=1}^N = \{K^\dag f_i +u_i\}_{i=1}^N$ of $F$ such that $u_i = 0,$ for all $i \in \Lambda_1$ and $U_{i_0} \neq 0$ for some $i_0 \in \Lambda_2.$ As $t \to \|f_i\|\,\|K^\dag f_i + tu_i\|$ is a  continuous  function on $\mathbb{R}$ and $\|f_i\|\,\|K^\dag f_i + u_i\|<L,$ for all $i \in \Lambda_2,$ there exists a $\delta_{i}^K>0$ such that for any $0 \leq |t| \leq \delta_{i}^K,\,$ $\|f_i\|\,\|K^\dag f_i + t u_i\| < L,$ for any $i \in \Lambda_2.$ Taking $\delta= \min \{\delta_{i}^K : i \in \Lambda_2\}$ we have, for all $i \in \Lambda_2,\,0 <  |t| < \delta,\,\|f_i\|\,\|K^\dag f_i + t u_i\| < L.$ Let $|t| \leq \delta.$ Clearly, $G_t = \{K^\dag f_i +u_i\}_{i=1}^N$ is a $K-$dual of $F.$ Further,
	\begin{align*}
		\mathbb{O}_{1} (F, G_t) &= \max_{1\leq i \leq N} \|f_i\|\,\|K^\dag f_i + t u_i\| \\&= L \\&=  \max_{1\leq i \leq N} \|f_i\|\,\|K^\dag f_i \| \\&= 	\mathbb{O}_{1} (F, K^\dag F) \\&=	\widetilde{\mathbb{O}}_{1} (F)
	\end{align*}
	Therefore, $G_t = \{K^\dag f_i +u_i\}_{i=1}^N$ is a $1-$erasure optimal $K-$dual of $F$  with respect to the operator norm, for $|t| \leq \delta.$
\end{proof}

\begin{rem}
    Note that if $N=n$ then the canonical dual $K^\dag F$ is the only $K-$dual of $F.$  Hence, the canonical dual is the unique $1-$erasure optimal $K-$dual of $F$ under operator norm.
\end{rem}

\begin{lem}\label{lem3point6}
	Let $\{f_1,\ldots,f_m\} \subset \mathcal{H}_n$ be linearly independent and $\alpha $ be a scalar. Then, there exists $h \in \mathcal{H}_n$ such that $\langle f_i,h \rangle = \alpha, \;\forall\,i=1,2,\ldots,m.$
\end{lem}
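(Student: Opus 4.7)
The plan is to realize the desired $h$ as a preimage of a constant vector under a natural analysis-type operator attached to $\{f_1,\ldots,f_m\}$, reducing the lemma to a routine surjectivity statement in finite dimensions.

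First I would introduce the linear map $T:\mathcal{H}_n \to \mathbb{C}^m$ defined by $T(h) = (\langle h, f_1\rangle, \ldots, \langle h, f_m\rangle)$. With the standard inner product on $\mathbb{C}^m$, a direct computation identifies its adjoint as the synthesis-style map $T^{*}:\mathbb{C}^m \to \mathcal{H}_n$, $(c_1,\ldots,c_m)\mapsto \sum_{i=1}^{m} c_i f_i$. By hypothesis $\{f_1,\ldots,f_m\}$ is linearly independent, so $T^{*}$ is injective. Since both spaces are finite dimensional, this immediately forces $T$ to be surjective, e.g.\ via $\operatorname{ran}(T) = \ker(T^{*})^{\perp} = \mathbb{C}^m$, or simply by comparing dimensions.

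Having established surjectivity, I would pick any preimage $h \in T^{-1}\bigl((\bar\alpha,\ldots,\bar\alpha)\bigr)$, i.e.\ any $h\in\mathcal{H}_n$ with $\langle h, f_i\rangle = \bar\alpha$ for every $i$. Taking complex conjugates then yields $\langle f_i, h\rangle = \alpha$ for all $i=1,\ldots,m$, which is precisely the conclusion of the lemma.

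There is no genuine obstacle in this plan; the lemma is essentially a rank/duality fact in finite dimensions. The only minor point to track carefully is the conjugate-linearity convention in the second slot of the inner product, which is why one targets $(\bar\alpha,\ldots,\bar\alpha)$ rather than $(\alpha,\ldots,\alpha)$ under $T$. An alternative route that bypasses adjoints entirely is to restrict to $W=\operatorname{span}\{f_1,\ldots,f_m\}$, construct a biorthogonal family $\{e_1,\ldots,e_m\}\subset W$ satisfying $\langle f_i,e_j\rangle=\delta_{ij}$, and take $h=\bar\alpha\sum_{j=1}^{m} e_j$, which would give the same conclusion.
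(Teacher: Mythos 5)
Your proof is correct, and it establishes the same underlying fact as the paper --- surjectivity of the evaluation map $h \mapsto (\langle f_i, h\rangle)_{i=1}^m$ as a consequence of linear independence --- but by a different mechanism. The paper works in coordinates: it expands each $f_i$ in an orthonormal basis, writes down the resulting $m \times n$ linear system with right-hand side $(\alpha,\ldots,\alpha)$, and argues that the coefficient matrix and the augmented matrix both have rank $m$, so the system is consistent. You instead stay coordinate-free, identifying the adjoint of the analysis-type map $T$ as the synthesis map $(c_i) \mapsto \sum_i c_i f_i$, deducing injectivity of $T^*$ from linear independence, and concluding $\operatorname{ran}(T) = \ker(T^*)^\perp = \mathbb{C}^m$. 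Your route buys two things: it avoids choosing a basis, and it handles the conjugate-linearity in the second slot explicitly (by targeting $(\bar\alpha,\ldots,\bar\alpha)$ under $T$), a point the paper's coordinate computation passes over silently, since in a complex Hilbert space $\langle f_i, h\rangle = \sum_j f_{ij}\overline{h_j}$ rather than $\sum_j f_{ij}h_j$. The paper's version is more elementary in the sense that it needs only the rank criterion for solvability of linear systems. Your alternative biorthogonal-family construction is also valid and is perhaps the most direct of the three.
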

\begin{proof}
	For $1 \leq i \leq m,$ $f_i$ can be written as $f_i = f_{i1}e_1 + \cdots + f_{in}e_n,$ where $\{e_1,\ldots,e_n\}$ is an orthonormal basis for $\mathcal{H}_n.$ Consider the following system of $m$ equations in $n$ unknowns, namely, ${h}_1,{h}_2,\ldots,{h}_n.$ 
	\begin{align*}\label{eqn7}
		&f_{11} {h}_1 + \cdots +  f_{1n} {h}_n = \alpha\\
		&f_{21} {h}_1 + \cdots + f_{2n} {h}_n= \alpha\\
		&\qquad\qquad\vdots\\
		&f_{m1} {h}_1 + \cdots + f_{mn} {h}_n = \alpha.
	\end{align*}
	As $\{f_1,\ldots,f_m\}$ is linearly independent, the coefficient matrix of the above system has rank $m$ and so does the augmented matrix. Hence, the above system of equations has a solution and $h = \sum \limits_ {j=1}^n h_j e_j$ satisfies  $\langle f_i,h \rangle = \alpha \;\forall\,i=1,2,\ldots,m.$ 
\end{proof}

The following theorem characterizes the conditions under which the canonical dual of a Parseval $K-$frame is not only optimal with respect to the operator norm for a single erasure but also unique. This result establishes a geometric criterion, specifically involving the intersection of certain subspaces and the linear independence of a subset of frame elements that guarantees the uniqueness of this optimal dual.
\begin{thm}
Let $F$ be a Parseval $K-$frame for $\mathcal{H}_n$. Then the following statements are equivalent:
\begin{enumerate}
    \item[\emph{(i)}] The canonical dual $\{K^\dag f_i\}_{i=1}^N$ is the unique $1-$erasure optimal $K-$dual of $F$ under the operator norm.
    \item[\emph{(ii)}] The subspaces $V_1$ and $V_2$ satisfy \( V_1 \cap V_2 = \{0\} \) and the set \( \{f_i\}_{i \in \Lambda_2} \) is linearly independent.
\end{enumerate}
\end{thm}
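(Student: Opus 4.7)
The plan is to prove each implication separately.

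For $(ii) \Rightarrow (i)$: Proposition~\ref{prop4point1} already yields that under $V_1 \cap V_2 = \{0\}$ the canonical dual $K^\dag F$ is $1$-erasure optimal. Suppose $G = \{K^\dag f_i + u_i\}_{i=1}^N$ is another optimal $K$-dual. Since $\|K^\dag f_i + u_i\| \le \|K^\dag f_i\|$ for every $i \in \Lambda_1$, the same trace computation as in the proof of Proposition~\ref{prop4point1} (which uses $V_1 \cap V_2 = \{0\}$ to obtain $\sum_{i \in \Lambda_1}\langle K^\dag f_i, u_i\rangle = 0$) forces $u_i = 0$ for all $i \in \Lambda_1$. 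The biorthogonality relation then collapses to $\sum_{i \in \Lambda_2}\langle f, u_i\rangle f_i = 0$ for every $f \in \mathcal{H}_n$, and linear independence of $\{f_i\}_{i \in \Lambda_2}$ immediately forces $u_i = 0$ on $\Lambda_2$ as well, so $G = K^\dag F$.

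For $(i) \Rightarrow (ii)$, I argue the contrapositive: in each of the two cases I construct a non-canonical optimal $K$-dual $G_t = \{K^\dag f_i + t u_i\}$ for all sufficiently small $t > 0$. If $\{f_i\}_{i \in \Lambda_2}$ is linearly dependent, fix a nontrivial relation $\sum_{j \in \Lambda_2} c_j f_j = 0$ and any $h \neq 0$, and set $u_i = 0$ on $\Lambda_1$ and $u_j = \overline{c_j}\,h$ on $\Lambda_2$; biorthogonality is immediate, $\|f_i\|\,\|g_i^t\| = L$ holds exactly on $\Lambda_1$, and continuity keeps this quantity strictly below $L$ on $\Lambda_2$. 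If instead $V_1 \cap V_2 \neq \{0\}$, fix $0 \neq w \in V_1 \cap V_2$ and---this is the key choice---expand $w = \sum_{i \in B}\tilde a_i f_i$ along a subset $B \subseteq \Lambda_1$ for which $\{f_i\}_{i \in B}$ is a basis of $V_1$, so that the support $S := \{i \in B : \tilde a_i \neq 0\}$ is nonempty and linearly independent. Writing also $w = \sum_{j \in \Lambda_2} b_j f_j$, set $u_i = \overline{\tilde a_i}\,h$ on $B$, $u_i = 0$ on $\Lambda_1 \setminus B$, and $u_j = -\overline{b_j}\,h$ on $\Lambda_2$; then $\sum_i \langle f, u_i\rangle f_i = \langle f, h\rangle(w - w) = 0$. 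Since $\{K^\dag f_i\}_{i \in S}$ is linearly independent ($K^\dag$ being injective on $\operatorname{span} F$), the coordinate-wise extension of Lemma~\ref{lem3point6} produces $h \in \mathcal{H}_n$ with $\langle K^\dag f_i, h\rangle = -\overline{\tilde a_i}$ for every $i \in S$. A direct expansion then yields $\|K^\dag f_i + t u_i\|^2 = \|K^\dag f_i\|^2 - 2t|\tilde a_i|^2 + t^2 |\tilde a_i|^2 \|h\|^2 < \|K^\dag f_i\|^2$ on $S$ for small $t > 0$; indices in $\Lambda_1 \setminus S$ have $u_i = 0$, and $\Lambda_2$ is again handled by continuity.

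The main obstacle is the construction in the case $V_1 \cap V_2 \neq \{0\}$. A generic perturbation of the form $u_i = \overline{\tilde a_i}\,h$ yields $\sum_{i \in \Lambda_1}\operatorname{Re}\langle K^\dag f_i, u_i\rangle < 0$ only \emph{on average}, while optimality of $G_t$ actually demands the \emph{pointwise} strict negativity of this first-order term at every $i \in \Lambda_1$ with $u_i \neq 0$. Confining the support of $(\tilde a_i)$ to a linearly independent subfamily of $F_1$ via the choice of basis $B$ of $V_1$ inside $\Lambda_1$ is precisely what enables the coordinate-wise version of Lemma~\ref{lem3point6} to prescribe the individual values $\langle K^\dag f_i, h\rangle$ and thus turn an average decrease into a pointwise one.
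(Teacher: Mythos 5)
Your proof is correct and follows essentially the same route as the paper: the same trace/biorthogonality argument for (ii)$\Rightarrow$(i), and the same two small-$t$ perturbation constructions $G_t=\{K^\dag f_i+tu_i\}$ (supported on a linearly independent subfamily of $\{f_i\}_{i\in\Lambda_1}$ in the second case) for the contrapositive of (i)$\Rightarrow$(ii). Your one refinement is welcome: in the case $V_1\cap V_2\neq\{0\}$ you prescribe $\langle K^\dag f_i,h\rangle$ directly via the coordinate-wise version of Lemma~\ref{lem3point6}, which is precisely the quantity appearing in the first-order term of $\|K^\dag f_i+tu_i\|^2$, whereas the paper's write-up chooses $h$ through $\langle f_j,\bar c_j h\rangle<0$ and then silently uses $\operatorname{Re}\langle K^\dag f_i,\tilde u_i\rangle<0$ in the expansion.
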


\begin{proof}
	$(i)\implies (ii)$ First we prove $\{f_i\}_{i \in \Lambda_2}$ is linearly independent. Suppose not. Then there exists $\{c_i\}_{i \in \Lambda_2},$ not all zero such that $\sum\limits_{i \in \Lambda_2}c_i f_i =0.$ Let $\widetilde{\Lambda_2} = \{i \in \Lambda_2 : c_i \neq 0\}.$ Take a non-zero element $u$ in $\mathcal{H}_n.$  Taking $U =\{u_i\}_{i=1}^N$ as
	\begin{align*}
		u_i = \begin{cases}
			u\bar{c_i}, \;\;& for \;i \in \widetilde{\Lambda_2},\\
			0,\;\; &\text{otherwise},
		\end{cases}
	\end{align*}

Obviously, $\sum\limits_{i=1}^N \langle f,f_i\rangle u_i = \sum\limits_{i \in \widetilde{\Lambda_2}} \langle f,f_i\rangle u_i + \sum\limits_{i \in \widetilde{\Lambda_2}^c} \langle f,f_i\rangle u_i =  \left\langle f,\sum\limits_{i \in \widetilde{\Lambda_2}} c_if_i\right\rangle u =0$ and hence $\{K^\dag f_i + U_i\}_{i=1}^N$ is a $K-$dual of $F.$ For all $i \in \Lambda_2,\,\|K^\dag f_i\|\,\|f_i\| <L.$ Then there exist a $\delta >0$ such that  $i \in \Lambda_2,\,\|K^\dag f_i + tu_i\|\,\|f_i\| <L, $ for all $t \in (-\delta, \delta),$ as analogous to Theorem \ref{theorem4point3op}. Then for the dual $G_t = \{K^\dag f_i + tu_i\}_{i=1}^N,\,$ $\mathbb{O}_{1} (F, G_t) = L$ and hence $G_t$ is a $1-$erasure optimal $K-$dual of $F$ under operator norm for any $t \in  (-\delta, \delta),$ which gives a contradiction. \\
Now we show that $V_1 \cap V_2 = \{0\}.$ If not, then there exist a $0 \neq f \in V_1 \cap V_2.$
Then, $f$ can be written as $f= \sum\limits_{j \in \Lambda'_1 \subset \Lambda_1} c_{j} f_{j} = \sum\limits_{k \in \Gamma_2} d_k f_k, $ where $c_j \neq 0$ and $\{f_{j}\}_{j \in \Lambda_1'}$  is also linearly independent. As $\{ c_j f_j\}_{j \in \Gamma'_1}$ is also linearly  independent, by Lemma \ref{lem3point6}, there exists  $h\in \mathcal{H}_n$ such that $\langle  f_j, \bar{c}_j h \rangle <0,$ for all $ j \in \Lambda'_1.$ Now, let  $\tilde{U} =\{\tilde{u}_i\}_{i=1}^N,$ where
\[	\tilde{u}_i=\begin{cases}
	\bar{c}_i h, & \text{if $i \in \Lambda'_1$}\\
	0, & \text{if $i \in \Lambda_1 \setminus \Lambda'_1$}\\
	-\bar{d}_i h, & \text{if $i \in \Lambda_2$}.
\end{cases}
\] 
Then for any $t\in \mathbb{R},\, \sum\limits_{1\leq i \leq N} \langle f,f_i \rangle t \Tilde{u}_i  =\sum\limits_{i \in \Lambda'_1} \langle f,c_if_i \rangle th - \sum\limits_{i \in \Lambda_2} \langle f,d_if_i \rangle th = \left\langle f, \sum\limits_{i \in \Lambda'_1}c_i f_i - \sum\limits_{i \in \Lambda_2} d_i f_i \right\rangle th =0.$ Therefore, $ \{K^\dag f_i + t\Tilde{u}_i\}_{i=1}^N$ is a noncanonical $K-$dual of $F,$ for any $t \in \mathbb{R}.$ For all $i \in \Lambda_2,\,$ $ \|f_i\|\left\|K^\dag f_i  \right\|  < L.$ Then there exists $\tilde{t}_i >0$ such that $ \|f_i\|\left\|K^\dag f_i + \tilde{t}_i \Tilde{u}_i \right\|  < L,$ for each $i \in \lambda_2.$ Taking  $ \tilde{t}_0 = \min \{t_i : i \in \Lambda_2\},$ we have 
$$   \|f_i\|\left\|\frac{1}{A}f_i + \tilde{t}_0 \Tilde{u}_i \right\|  < L, \text{ for all } i \in \Lambda_2. $$
Further, for $t \in \mathbb{R},\,i \in \Lambda_1 \setminus \Lambda'_1 , \,$ we have $  \|f_i\| \left\|K^\dag f_i +t\Tilde{u}_i \right\|  = L$ and for   $i \in \Lambda'_1 ,$ 
\begin{align*}
\|f_i\|^2\left\|K^\dag f_i +t\Tilde{u}_i \right\|^2  &=  \|f_i\|^2 \left( \|K^\dag f_i\|^2 + t^2\|\Tilde{u}_i\|^2 + 2t Re\langle K^\dag f_i,\Tilde{u}_i\rangle \right) \\&= L^2 +  t \|f_i\|^2 \left(  t\|\Tilde{u}_i\|^2 + 2 Re\langle K^\dag f_i,\Tilde{u}_i\rangle\right) .
\end{align*}

\normalsize
The negative value of $\langle  f_i, \Tilde{u}_i \rangle $ allows us to choose $t'_i >0$ small enough such that $\left(  t_i \|\Tilde{u}_i\|^2 + 2 Re\langle K^\dag f_i,\Tilde{u}_i\rangle\right)< 0$ and hence $\|f_i\|^2\left\|K^\dag f_i +t_i \Tilde{u}_i \right\|^2 <L^2, $ for all $i \in \Lambda'_1 .$  Furthermore, taking $\tilde{t}_1 = \min \{t_i : i \in \Lambda'_1\},$ we get   $$\|f_i\|\,\left\|K^\dag f_i +\tilde{t}_1 \Tilde{u}_i \right\|  < L, \text{ for all } i \in \Gamma'_1.$$
If we choose  $\Tilde{t} = min \{ \tilde{t}_0,t_1\},$ we obtain a dual $\Tilde{G}= \left\{ K^\dag f_i + \Tilde{t}\Tilde{u}_i\\\\\right\}_{i=1}^N$ of $F$ for which $r^{(1)}(F,\Tilde{G}) = L,$ thereby contradicting the hypothesis.\\

$(ii) \implies (i)$   Assume that $V_1 \cap V_2 = \{0\}$ and  $\{f_i\}_{i \in \Lambda_2}$ is linearly independent. By Theorem [...],  $K^\dag F $ is a $1-$erasure optimal $K-$dual of $F$ under operator norm. Suppose $G= \left\{ K^\dag f_i +u_i \right\}_{i=1}^N $  is a $1-$erasure optimal $K-$dual of $F$ under operator norm  wherein $\displaystyle{ \sum_{i=1}^N \langle f,u_i \rangle f_i = 0} ,\; \text{for all }f \in \mathcal{H}_n.$ Further, using the condition $V_1 \cap V_2 = \{0\},$ we obtain 
\begin{align}\label{eqn4.10}
	\sum\limits_{i \in  \Lambda_1 } \langle f,u_i \rangle f_i =0 =\sum\limits_{i \in  \Lambda_2 } \langle f,u_i \rangle f_i, \;f \in \mathcal{H}_n.
\end{align}
 This leads to $\Theta_{F_1}^* \Theta_{U_1} = 0, \; \textit{where} \; F_1 = \{f_i\}_{i \in  \Lambda_1}\; \textit{and} \;\; U_1 = \{u_i\}_{ i \in  \Lambda_1}.$ Therefore, $\Theta_{K^\dag F_1}^* \Theta_{U_1} = K^\dag \Theta_{F_1}^* \Theta_{U_1} =0 $ and so	$	tr\left(\Theta_{K^\dag F_1}^* \Theta_{U_1}\right) = tr\left( \Theta_{U_1} \Theta_{K^\dag F_1}^* \right)  =\sum\limits_{i \in  \Lambda_1 } \langle K^\dag f_i , u_i \rangle = 0.$ Consequently,
\begin{align}\label{eqn4.11}
	Re \left(\sum\limits_{i \in  J_1 } \langle K^\dag f_i , u_i \rangle \right) = 0.
\end{align}

 For $i \in \Lambda_1,\;$ $\max\limits_{i \in \Lambda_1} \|f_i\|\left\|K^\dag f_i + u_i\right\| \leq \max\limits_{1 \leq i \leq N} \|f_i\|\left\|K^\dag f_i + u_i\right\| = L = \max\limits_{i \in \Lambda_1} \|f_i\|\left\|K^\dag f_i \right\|. $ This leads to, $\|f_i\|\left\|K^\dag f_i + u_i\right\| \leq \|f_i\|\left\|K^\dag f_i \right\|,\,\forall i \in \Lambda_1.  $ This gives $\left\|K^\dag f_i + u_i\right\|^2 \leq \left\|K^\dag f_i \right\|^2,\,\forall i \in \Lambda_1. $ Consequently, $\|u_i\|^2 + 2 Re\langle K^\dag f_i, u_i\rangle \leq 0,$  for all $ i \in \Lambda_1.$ Therefore, $\sum\limits_{i \in \Lambda_1}\|u_i\|^2 + 2 Re \left(\sum\limits_{i \in \Lambda_1}\langle K^\dag f_i, u_i\rangle \right)\leq 0$ and hence using \eqref{eqn4.11}, we have 
$\sum\limits_{i \in \Lambda_1}\|u_i\|^2 \leq 0.$ This suggest that $u_i =0$  for all $ i \in \Lambda_1.$ Also using \eqref{eqn4.10} and  the fact  $\{f_i\}_{i \in \Lambda_2}$ is linearly independent we have $u_i =0$  for all $ i \in \Lambda_2.$ Thus, $u_i =0, 1 \leq i \leq N.$ 
\end{proof}

\begin{cor}
Let $F = \{f_i\}_{i=1}^N$ be a Parseval $K$-frame for $\mathcal{H}_n$. If  $\|f_i\|\,\|K^\dag f_i\|$ is constant for all $1 \leq i \leq N$, then the canonical $K-$dual $K^\dag F$ of $F$ is the unique $1-$erasure optimal $K-$dual of $F$ under operator norm.
\end{cor}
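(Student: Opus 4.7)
The plan is to reduce this corollary directly to the equivalence theorem that immediately precedes it. The hypothesis $\|f_i\|\,\|K^\dag f_i\| = L$ for all $1\le i\le N$ means that the index set $I_1 = \{\,i : \|f_i\|\,\|K^\dag f_i\| = L\,\}$ coincides with $\{1,2,\ldots,N\}$, and consequently $I_2 = \{1,\ldots,N\} \setminus I_1 = \emptyset$. This is really the only observation doing work in the proof; everything else is checking that the hypotheses of the preceding theorem degenerate nicely.

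With $I_2 = \emptyset$, I would note that $V_2 = \operatorname{span}\{f_i : i \in I_2\} = \operatorname{span}(\emptyset) = \{0\}$, so the subspace intersection condition $V_1 \cap V_2 = \{0\}$ holds trivially. Similarly, the set $\{f_i\}_{i \in I_2}$ is the empty set, which is vacuously linearly independent. Thus both structural hypotheses of condition (ii) in the preceding equivalence theorem are satisfied.

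Invoking the implication (ii) $\Rightarrow$ (i) of that theorem then yields the conclusion: the canonical $K$-dual $\{K^\dag f_i\}_{i=1}^N$ is the unique $1$-erasure optimal $K$-dual of $F$ under the operator norm. There is essentially no obstacle to this argument; the substantive work has already been carried out in the preceding theorem, and the corollary simply records the clean uniform case where every index is ``heavy'' and no choice of perturbation $\{u_i\}$ is ever permitted.
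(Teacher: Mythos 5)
Your proposal is correct and is precisely the intended derivation: the paper states this corollary without proof immediately after the equivalence theorem, and the reduction via $I_1=\{1,\ldots,N\}$, $I_2=\emptyset$, $V_2=\{0\}$ (so that both conditions in (ii) hold vacuously) is exactly what is meant. The degenerate case causes no trouble in the theorem's proof of (ii) $\Rightarrow$ (i), since the trace argument there already forces $u_i=0$ for all $i\in I_1$, which is now the whole index set.
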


The following proposition provides a condition under which the canonical $K-$dual fails to be $1-$ erasure optimal with respect to the operator norm.

\begin{prop}
Let $F = \{f_i\}_{i=1}^N$ be a Parseval $K$-frame for $\mathcal{H}_n$. Assume that the subset $\{f_i\}_{i \in \Lambda_1}$ is linearly independent and there exists a sequence $\{c_i\}_{i=1}^N$ with $c_i \neq 0$ for all $i \in \Lambda_1$ such that $\sum_{i=1}^N c_i f_i = 0.$ Then, the canonical $K-$dual $K^\dag F$ of $F$ is not a $1-$erasure optimal $K-$dual of $F$ under the operator norm.
\end{prop}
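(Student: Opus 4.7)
The plan is to construct an explicit $K$-dual $G$ of $F$ with $\mathbb{O}_{1}(F, G) < L$, directly contradicting optimality of $K^\dag F$. Since every $K$-dual has the form $G = \{K^\dag f_i + u_i\}_{i=1}^N$ with $\sum_{i=1}^N \langle f, u_i\rangle f_i = 0$ for all $f \in \mathcal{H}_n$, I exploit the given relation $\sum_i c_i f_i = 0$ through the ansatz $u_i = t\,\bar c_i\, h$ for a single vector $h \in \mathcal{H}_n$ and a small parameter $t > 0$. This choice automatically satisfies the biorthogonality condition, since $\sum_i \langle f, u_i\rangle f_i = t\,\langle f, h\rangle \sum_i c_i f_i = 0$.

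The task then splits over $\Lambda_1$ and $\Lambda_2$. For $i \in \Lambda_2$ the strict inequality $\|f_i\|\,\|K^\dag f_i\| < L$ persists for small $t$ by continuity. For $i \in \Lambda_1$, expanding
\[
\|K^\dag f_i + t\,\bar c_i\, h\|^2 = \|K^\dag f_i\|^2 + 2t\,\operatorname{Re}\langle c_i K^\dag f_i, h\rangle + t^2 |c_i|^2 \|h\|^2
\]
shows that a strict decrease in $\|K^\dag f_i + t\,\bar c_i\, h\|$ holds for all sufficiently small $t > 0$ as soon as $\operatorname{Re}\langle c_i K^\dag f_i, h\rangle < 0$. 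I therefore apply Lemma~\ref{lem3point6} to the family $\{c_i K^\dag f_i\}_{i \in \Lambda_1}$ with $\alpha = -1$, producing $h$ with $\langle c_i K^\dag f_i, h\rangle = -1$ for every $i \in \Lambda_1$; this is available provided $\{c_i K^\dag f_i\}_{i \in \Lambda_1}$ is linearly independent.

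The main obstacle is verifying this linear independence, because the hypothesis supplies it only for $\{f_i\}_{i \in \Lambda_1}$ and $K^\dag$ may in principle collapse independent vectors. The essential structural input is that for a Parseval $K$-frame every $f_i$ lies in $\operatorname{range}(K)$: choosing $f \in \ker K^*$ in the defining identity $\sum_i |\langle f, f_i\rangle|^2 = \|K^* f\|^2$ forces $\langle f, f_i\rangle = 0$ for all $i$, hence $f_i \in (\ker K^*)^\perp = \operatorname{range}(K)$. Since $K^\dag$ is injective on $\operatorname{range}(K)$, linear independence of $\{f_i\}_{i \in \Lambda_1}$ transfers to $\{K^\dag f_i\}_{i \in \Lambda_1}$ and, using $c_i \neq 0$ on $\Lambda_1$, to $\{c_i K^\dag f_i\}_{i \in \Lambda_1}$ as well. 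Once $h$ has been secured, fixing any $t^* > 0$ small enough to enforce the continuity bound on $\Lambda_2$ and $t^* |c_i|^2 \|h\|^2 < 2$ for every $i \in \Lambda_1$ simultaneously yields $G = \{K^\dag f_i + t^*\,\bar c_i\, h\}_{i=1}^N$ with $\mathbb{O}_{1}(F, G) < L = \mathbb{O}_{1}(F, K^\dag F)$, so that $K^\dag F$ is not a $1$-erasure optimal $K$-dual under the operator norm.
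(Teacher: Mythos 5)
Your proposal is correct and follows essentially the same route as the paper's proof: the perturbation $u_i = \bar c_i h$ built from the dependence relation, Lemma~\ref{lem3point6} applied to $\{c_i K^\dag f_i\}_{i\in\Lambda_1}$ to make each $\operatorname{Re}\langle c_i K^\dag f_i, h\rangle$ negative, and a continuity argument in $t$ on both index sets. The one genuine improvement is that you justify the transfer of linear independence from $\{f_i\}_{i\in\Lambda_1}$ to $\{K^\dag f_i\}_{i\in\Lambda_1}$ via $f_i\in\operatorname{range}(K)$ and the injectivity of $K^\dag$ there, a step the paper asserts without proof.
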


\begin{proof}
	Since $\{f_i\}_{i \in \Lambda_1}$ is linearly independent, then $\{c_i K^\dag f_i\}_{i \in \Lambda_1}$ is also linearly independent and hence there exists a $h \in \mathcal{H}_n$ such that for all $i \in \Lambda_1,$ $\langle K^\dag f_i, \bar{c}_i h \rangle = \langle c_i K^\dag f_i, h \rangle < 0,$ by Lemma\ref{lem3point6}. Let $U=\{u_i\}_{i=1}^N,$ where $u_i = \bar{c}_i h;1 \leq i \leq N. $ It is easy to see that $\sum\limits_{i=1}^N \langle f, f_i \rangle u_i = \left \langle f, \sum\limits_{i=1}^N c_i f_i \right \rangle h = 0.$ Therefore, $\{K^\dag f_i + tu_i\}_{i=1}^N$ is a $K-$dual of $F$ for all $t.$ Now, for all $i \in \Lambda_1,$
	\begin{align*}
		\|f_i\|^2\left\|K^\dag f_i +tu_i \right\|^2  &=  \|f_i\|^2 \left( \|K^\dag f_i\|^2 + t^2\|u_i\|^2 + 2t Re\langle K^\dag f_i,u_i\rangle \right) \\&= L^2 +  t \|f_i\|^2 \left(  t\|u_i\|^2 + 2 Re\langle K^\dag f_i,u_i\rangle\right).
	\end{align*}
 As $t \to \|f_i\|\,\|K^\dag f_i + tu_i\|$ is a  continuous  function on $\mathbb{R}$ then there exist $t_i >0 $ such that  $ L^2 +  t \|f_i\|^2 \left(  t\|u_i\|^2 + 2 Re\langle K^\dag f_i,u_i\rangle\right) < L^2$ for all $t \in (-t_i, t_i)$ and for each $i \in \Lambda_1.$ As for each $j \in \Lambda_2,\;$ $\|f_j\|^2\,\|K^\dag f_j \|^2 <L^2,$ then there exist $t'_j>0$ small enough such that, for any $t \in (-t'_j, t'_j),\;$  $\|f_j\|^2 \left( \|K^\dag f_j\|^2 + t^2\|u_j\|^2 + 2t Re\langle K^\dag f_j,u_j\rangle \right) < L,$ for each $j \in \Lambda_2.$ Taking $\widetilde{t} = \min\limits_{i \in \Lambda_1, j \in \Lambda_2} \left\{t_i, t'_j\right\},$ we have, for $t \in (-\widetilde{t}, \widetilde{t} ),\;$ $\|f_i\|^2\left\|K^\dag f_i +tu_i \right\|^2  =  \|f_i\|^2 \left( \|K^\dag f_i\|^2 + t^2\|u_i\|^2 + 2t Re\langle K^\dag f_i,u_i\rangle \right) < L,$ for any $1 \leq i \leq N.$ Hence, for the dual $G_t = \{K^\dag f_i + tu_i\}_{i=1}^N ,$\; $\mathbb{O}_{1}(F,G_t) <L = \mathbb{O}_{1}(F,G),\;\forall t \in  (-\widetilde{t}, \widetilde{t} ).$

\end{proof}

	\noindent We observe that the property of $1$-erasure optimality under the operator norm is preserved under unitary transformations.

\begin{rem}\label{thm4point3}
Let \( F \) be a \( K -\)frame for \( \mathcal{H}_n \), and let \( U \) be a unitary operator on \( \mathcal{H}_n \) such that \( UK = KU \). Then  \( G \) is a $1-$erasure optimal \( K-\)dual of \( F \) with respect to the operator norm if and only if \( UG \) is a $1-$erasure optimal \( K-\)dual of \( UF \) with respect to the operator norm.
\end{rem}

	\subsection{Optimal $K-$dual frames under spectral radius}
    \vspace{.2 cm}
	Let \( F = \{f_i\}_{i=1}^N \) be a \( K -\)frame for \( \mathcal{H}_n \). Let us define
\begin{align*}
	\tilde{r}_1(F) := \inf \left\{ r_1(F, G') : G' \text{ is a } K-\text{dual of } F \right\}.
\end{align*}
A dual frame \( \tilde{G} \) of \( F \) is called a \textit{$1-$erasure spectrally optimal \( K \)-dual} of \( F \) if $r_1(F, \tilde{G}) = \tilde{r}_1(F).$ For each integer \( m \) such that \( 1 < m \leq N \), we define 
\[
\tilde{r}_{m} (F) := \inf \bigg\{ r_{m} (F,G') : G' \;\text{is an } (m-1)-\text{erasure} \text{ spectrally optimal $K-$dual of}\;\; F \bigg\},
\]
where \( \mathcal{R}_{m-1}(F) \) denotes the collection of all \((m-1)\)-erasure spectrally optimal \( K \)-duals of \( F \). A dual frame \( \tilde{G} \) of \( F \) is called an \textit{\( m-\)erasure spectrally optimal \( K-\)dual} of \( F \) if it is already an \((m{-}1)\)-erasure spectrally optimal \( K-\)dual and satisfies the condition \( r_m(F, \tilde{G}) = \tilde{r}_m(F) \). The collection of all such \( K-\)duals is denoted by \( \mathcal{R}_m(F) \).

  \begin{example}
Consider the $K-$frame $F = \{f_1, f_2, f_3, f_4\}$ for $\mathbb{R}^3$, where
\[
f_1 = \begin{bmatrix} 1 \\ 0 \\ 0 \end{bmatrix}, \quad
f_2 = \begin{bmatrix} 1 \\ 0 \\ 0 \end{bmatrix}, \quad
f_3 = \begin{bmatrix} \sqrt{2} \\ 0 \\ 0 \end{bmatrix}, \quad
f_4 = \begin{bmatrix} 0 \\ 1 \\ 0 \end{bmatrix},
\]
and let
\[
K = \begin{bmatrix}
2 & 0 & 0 \\
0 & 1 & 0 \\
0 & 0 & 0
\end{bmatrix}.
\]
It is easy to verify that $F$ is a Parseval $K$-frame for $\mathbb{R}^3$. The Moore–Penrose pseudoinverse of $K$ is
\[
K^\dag = \begin{bmatrix}
\frac{1}{2} & 0 & 0 \\
0 & 1 & 0 \\
0 & 0 & 0
\end{bmatrix}.
\]
Hence, the canonical $K$-dual of $F$ is given by
\[
K^\dag F = \left\{
\begin{bmatrix} \frac{1}{2} \\ 0 \\ 0 \end{bmatrix},
\begin{bmatrix} \frac{1}{2} \\ 0 \\ 0 \end{bmatrix},
\begin{bmatrix} \frac{1}{\sqrt{2}} \\ 0 \\ 0 \end{bmatrix},
\begin{bmatrix} 0 \\ 1 \\ 0 \end{bmatrix}
\right\}.
\]

We compute:
\[
\|f_1\| \cdot \|K^\dag f_1\| = \|f_2\| \cdot \|K^\dag f_2\| = \frac{1}{2}, \quad 
\|f_3\| \cdot \|K^\dag f_3\| = \|f_4\| \cdot \|K^\dag f_4\| = 1.
\]
Thus, $\mathbb{O}_{1}(F, K^\dag F) = 1$.

Any $K-$dual $G = \{g_i\}_{i=1}^4$ of $F$ must be of the form:
\[
\begin{aligned}
g_1 &= \begin{bmatrix} \frac{1}{2} + \alpha_1 \\ \alpha_2 \\ \alpha_3 \end{bmatrix}, \quad
g_2 = \begin{bmatrix} \frac{1}{2} + \beta_1 \\ \beta_2 \\ \beta_3 \end{bmatrix}, 
g_3 &= \begin{bmatrix} \frac{1}{\sqrt{2}} - \frac{\alpha_1 + \beta_1}{\sqrt{2}} \\ -\frac{\alpha_2 + \beta_2}{\sqrt{2}} \\ -\frac{\alpha_3 + \beta_3}{\sqrt{2}} \end{bmatrix}, \quad
g_4 = \begin{bmatrix} 0 \\ 1 \\ 0 \end{bmatrix}, \quad \alpha_i, \beta_i \in \mathbb{R}.
\end{aligned}
\]

We now compute:
\begin{align*}
\|f_1\| \cdot \|g_1\| &= \sqrt{\left(\frac{1}{2} + \alpha_1\right)^2 + \alpha_2^2 + \alpha_3^2}, \\
\|f_2\| \cdot \|g_2\| &= \sqrt{\left(\frac{1}{2} + \beta_1\right)^2 + \beta_2^2 + \beta_3^2}, \\
\|f_3\| \cdot \|g_3\| &= \sqrt{(1 - \alpha_1 - \beta_1)^2 + (\alpha_2 + \beta_2)^2 + (\alpha_3 + \beta_3)^2}, \\
\|f_4\| \cdot \|g_4\| &= 1.
\end{align*}

Hence, for any $K$-dual $G$ of $F$, we have $\mathbb{O}_{1}(F, G) \geq 1$, and thus the canonical dual $K^\dag F$ is a $1$-erasure optimal $K$-dual of $F$ under operator norm.

Now, take $\alpha_1 = \beta_1 = 0.05$, and $\alpha_2 = \alpha_3 = \beta_2 = \beta_3 = 0$. Then the corresponding dual is:
\[
G' = \left\{
\begin{bmatrix} 0.55 \\ 0 \\ 0 \end{bmatrix}, 
\begin{bmatrix} 0.55 \\ 0 \\ 0 \end{bmatrix}, 
\begin{bmatrix} 0.6364 \\ 0 \\ 0 \end{bmatrix}, 
\begin{bmatrix} 0 \\ 1 \\ 0 \end{bmatrix}
\right\},
\]
for which $\mathbb{O}_{1}(F, G') = 1$. Hence, the canonical dual is \emph{not} the unique $1$-erasure optimal $K$-dual under the operator norm.

We now compute:
\[
\langle f_1, K^\dag f_1 \rangle = \langle f_2, K^\dag f_2 \rangle = \frac{1}{2}, \quad 
\langle f_3, K^\dag f_3 \rangle = \langle f_4, K^\dag f_4 \rangle = 1.
\]
Thus, $r_1(F, K^\dag F) = 1$.

For any $K$-dual $G$ of $F$, we have:
\begin{align*}
\langle f_1, g_1 \rangle &= \frac{1}{2} + \alpha_1, \\
\langle f_2, g_2 \rangle &= \frac{1}{2} + \beta_1, \\
\langle f_3, g_3 \rangle &= 1 - \alpha_1 - \beta_1, \\
\langle f_4, g_4 \rangle &= 1.
\end{align*}
Thus, $r_1(F, G) \geq 1$. Therefore, the canonical dual $K^\dag F$ is a $1$-erasure spectrally optimal $K$-dual of $F$. Moreover, for the dual $G'$ above, we also have $r_1(F, G') = 1$. Therefore, the canonical dual fails to be the unique $1-$erasure spectrally optimal $K-$dual of $F$.

Finally, note that:
\[
\frac{\operatorname{tr}(K)}{N} = \frac{3}{4}.
\]
Thus there is no dual $G$ for which $(F, G)$ is a $(4,2)$ $1-$erasure optimal $K-$dual pair under the operator norm or spectral radius.
\end{example}

\vskip 1em
Next, we examine the existence of a $1-$erasure spectrally optimal $K-$dual corresponding to a given \textit{linearly connected frame} \cite{sali}. Two vectors \( f_i \) and \( f_j \) in a frame \( F = \{f_i\}_{i=1}^N \) are said to be \textit{linearly \( F \)-connected} if there exist elements \( f_{\ell_1}, \ldots, f_{\ell_m} \in F \) such that the set \( \{f_j, f_{\ell_1}, \ldots, f_{\ell_m}\} \) is linearly independent and
\[
f_i = c f_j + \sum_{k=1}^m c_k f_{\ell_k},
\]
where \( c \) and \( c_k \) are all nonzero scalars. The frame \( F \) is called a \textit{linearly connected frame} if every pair of vectors in \( F \) is linearly \( F-\)connected.

	\begin{thm}\label{theorem4point3}
		Let $K$ be a positive semi-definite operator in $\mathcal{H}_n.$ Then, for a linearly connected Parseval $K-$frame $F,$  there always exists a $1-$erasure spectrally optimal $K-$dual frame of $F$ and  $\tilde{r}_{1} (F) = \dfrac{tr(K)}{N}.$ 
	\end{thm}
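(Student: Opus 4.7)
The plan is to prove both assertions simultaneously: a matching lower bound via the trace identity, and an explicit construction of an optimal $K$-dual that exploits the linearly connected hypothesis.

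First I would establish the lower bound. For any $K$-dual $G = \{g_i\}_{i=1}^N$ of $F$, Remark~\ref{rem3point2} gives $\sum_{i=1}^N \langle f_i, g_i\rangle = \operatorname{tr}(K)$. Since $K$ is positive semi-definite, $\operatorname{tr}(K)$ is a nonnegative real, and
\[
\operatorname{tr}(K) = \left|\sum_{i=1}^N \langle f_i, g_i\rangle\right| \leq \sum_{i=1}^N |\langle f_i, g_i\rangle| \leq N \cdot r_1(F,G),
\]
so $\tilde{r}_1(F) \geq \operatorname{tr}(K)/N$.

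For the matching upper bound, I would aim to construct a $K$-dual $G^* = \{g_i^*\}$ with $\langle f_i, g_i^*\rangle = \operatorname{tr}(K)/N$ for every $i$; this gives $r_1(F, G^*) = \operatorname{tr}(K)/N$ by \eqref{equation3point1} and concludes the proof. Parametrize any $K$-dual of $F$ as $\{K^\dag f_i + u_i\}_{i=1}^N$, where $U = \{u_i\}$ lies in the linear space $V := \{U : \sum_i \langle f, u_i\rangle f_i = 0 \text{ for all } f\}$. For a Parseval $K$-frame with self-adjoint $K$, one has $S_F^K = K^2$, whence $\sum_i \langle f_i, K^\dag f_i\rangle = \operatorname{tr}(K^\dag K^2) = \operatorname{tr}(K)$. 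Setting $\beta_i := \operatorname{tr}(K)/N - \langle f_i, K^\dag f_i\rangle$, we have $\sum_i \beta_i = 0$, and the problem reduces to showing that the linear map $\Psi : V \to \{v \in \mathbb{C}^N : \sum v_i = 0\}$, $\Psi(U)_i := \langle f_i, u_i\rangle$, is surjective onto this hyperplane.

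The linearly connected hypothesis is precisely what delivers this surjectivity. Given any pair $i \neq j$, write $f_i = c f_j + \sum_{k=1}^m c_k f_{\ell_k}$ with all scalars nonzero and $\{f_j, f_{\ell_1}, \ldots, f_{\ell_m}\}$ linearly independent. For $h \in \mathcal{H}_n$, define $U^{(h)}$ by $u_i = h$, $u_j = -\bar c\, h$, $u_{\ell_k} = -\bar c_k\, h$, and $u_r = 0$ otherwise; substituting the expansion of $f_i$ shows $U^{(h)} \in V$. I would then pick $h$ to be the orthogonal projection of $f_j$ onto $\{f_{\ell_k}\}^\perp$, which is nonzero because $f_j \notin \operatorname{span}\{f_{\ell_k}\}$ by linear independence. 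For this $h$, $\Psi(U^{(h)})$ is supported on $\{i, j\}$ with $\Psi(U^{(h)})_i = c\langle f_j, h\rangle = -\Psi(U^{(h)})_j$, giving a nonzero scalar multiple of $e_i - e_j$. Varying the pair $(i,j)$, the image of $\Psi$ contains every $e_i - e_j$; since these span the hyperplane $\{\sum v_i = 0\}$, $\Psi$ is surjective. Choosing $U^* \in V$ with $\Psi(U^*) = (\beta_i)$ produces the desired dual $G^* = \{K^\dag f_i + u_i^*\}$.

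The main obstacle I anticipate is the verification that the building-block perturbation $U^{(h)}$ genuinely lies in $V$: this hinges on substituting $f_i = c f_j + \sum c_k f_{\ell_k}$ into $\sum_r \langle f, u_r\rangle f_r$ and cancelling term by term using the linear independence of $\{f_j, f_{\ell_k}\}$. The existence of the auxiliary vector $h$ with the required orthogonality and non-orthogonality properties is a short linear-algebra check but needs to be stated explicitly. Neither point is deep, but both must be handled carefully for the construction to go through.
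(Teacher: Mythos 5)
Your proposal is correct, and it shares with the paper the same two essential ingredients: the trace-identity lower bound $r_1(F,G)\ge \operatorname{tr}(K)/N$, and the key perturbation built from a linear-connectedness relation $f_i=cf_j+\sum_k c_k f_{\ell_k}$ together with an auxiliary vector orthogonal to all $f_{\ell_k}$ but not to $f_j$. Where you diverge is in how these building blocks are assembled. The paper runs a greedy iteration: starting from an arbitrary $K$-dual $G'$ it tracks the set $I_{G'}=\{i:\langle g'_i,f_i\rangle=\operatorname{tr}(K)/N\}$, shows $|I_{G'}|=N-1$ is impossible, and uses one perturbation per step to enlarge $I_{G'}$ until it equals $\{1,\dots,N\}$. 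You instead make a one-shot linear-algebra argument: the admissible perturbations form a linear space $V$, the map $\Psi(U)_i=\langle f_i,u_i\rangle$ is linear, your single-pair perturbations show its image contains every $e_i-e_j$ and hence the whole trace-zero hyperplane, and the target vector $\beta_i=\operatorname{tr}(K)/N-\langle f_i,K^\dag f_i\rangle$ lies in that hyperplane because $S_F^K=K^2$ for a Parseval $K$-frame with $K$ positive semi-definite, so $\sum_i\langle f_i,K^\dag f_i\rangle=\operatorname{tr}(K^\dag K^2)=\operatorname{tr}(K)$. Your version buys a cleaner, non-iterative proof that dispenses with the ``$|I_{G'}|=N-1$ cannot occur'' observation, at the cost of the extra (but easy) verifications that $\Psi(V)$ is a subspace containing the spanning set $\{e_i-e_j\}$ and that the canonical dual's diagonal sums to $\operatorname{tr}(K)$; both of these check out, as does the membership $U^{(h)}\in V$, which follows by substituting the expansion of $f_i$ exactly as you describe. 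One small point worth making explicit in a write-up: linear independence of $\{f_j,f_{\ell_1},\dots,f_{\ell_m}\}$ together with $c\neq 0$ forces $i\notin\{j,\ell_1,\dots,\ell_m\}$, so the definition of $U^{(h)}$ by cases is consistent.
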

	
	\begin{proof}
		Let $G'$ be any $K-$dual of $F.$ Then, $r_{1}(F,G')\geq \dfrac{tr(K)}{N},$ as proved in Theorem \ref{thm5point2}. Let us define
		\begin{align*}
			I_{G'} = \left\{i:\langle g'_i,f_i\rangle = \frac{tr(K)}{N}\right\}\;\text{and}\;
			I_{G'}^c= \{1,2,\ldots,N\}\setminus I_{G'} .
		\end{align*}
		If the number of elements $|I_{G'}| $ in $I_{G'}$ is $N,$ then $r_{1}(F,G')= \dfrac{tr(K)}{N}.$ In other words, $\tilde{r}_{1}(F)= \dfrac{tr(K)}{N}$ and  $G'$ is a $1-$erasure spectrally optimal $K-$dual of $F.$ Interestingly, $|I_{G'}| = N-1$  is not possible for, if $(I_{G'})^c = \{j\},$ then $\sum\limits_{i=1}^N \dfrac{tr(K)}{N} = tr(K) = \sum\limits_{1 \leq i \leq N}\langle g'_i,f_i\rangle =  \sum\limits_{\substack{1 \leq i \leq N \\ i\neq j}}\langle g'_i,f_i\rangle + \langle g'_j,f_j\rangle = \sum\limits_{\substack{1 \leq i \leq N \\ i\neq j}} \dfrac{tr(K)}{N} + \langle g'_j,f_j\rangle,$ which implies that $j \in I_{G'}.$
		
		Now, we shall assume that $|I_{G'}| \leq N-2$ and prove that there exits a dual $G''$ of $F$ such that $|I_{G''}| > |I_{G'}|.$ Suppose $i_1,i_2 \in (I_{G'})^c.$ As $f_{i_1}$ and $f_{i_2}$ are linearly $F-$connected, there exist $f_{\ell_1},f_{\ell_2},\ldots,f_{\ell_m}$ in $F$ such that
		\begin{align}\label{eqn4point14}
			f_{i_1} = cf_{i_2} + \sum\limits_{1 \leq k \leq m}c_{\ell_k}f_{\ell_k},\;\;c,c_k \neq 0
		\end{align}
		and $\{f_{i_2}, f_{\ell_1},f_{\ell_2},\ldots,f_{\ell_m}\}$ is linearly independent. As, $f_{i_2} \notin span\{f_{\ell_1},f_{\ell_2},\ldots,f_{\ell_m}\},$ there exists $v\in \mathcal{H}_n$ such that $\langle v, f_{\ell_k} \rangle =0, $ for all $1 \leq k \leq m$ and $\langle v,f_{i_2} \rangle = \dfrac{1}{\bar{c}}\left(\dfrac{tr(K)}{N} - \langle g'_{i_2},f_{i_2} \rangle  \right) \neq 0.$ Taking $V =\{v_i\}_{i=1}^N$ as
		\begin{align*}
			v_i = \begin{cases}
				-v, \;\;& for \;i=i_1,\\
				\bar{c}v,\;\; & for \;i= i_2, \\
				\bar{c_{i}}v, \;\;& for \; i=\ell_k, 1 \leq k \leq m, \\
				0,\;\; &\text{otherwise},
			\end{cases}
		\end{align*}
		we get $G'' = \{g'_i + v_i\}_{i=1}^N$ as a dual of $F$ for,
		\begin{align*}
			\sum\limits_{1\leq i \leq N} \langle f, v_i \rangle f_i =  \langle f, -v \rangle f_{i_1} +  \langle f, \bar{c}v \rangle f_{i_2} +\sum\limits_{1\leq k \leq m} \langle f, \overline{c_{\ell_k}}v \rangle f_{\ell_k}   = \langle f, v \rangle \left(\sum\limits_{1\leq k \leq m} c_{\ell_k} f_{\ell_k} + cf_{i_2} - f_{i_1} \right) =0,\; f \in \mathcal{H}_n,
		\end{align*}
		by \eqref{eqn4point14}. Now, $\langle g''_{i_2},f_{i_2} \rangle = \langle g'_{i_2}+\bar{c}v,f_{i_2} \rangle = \dfrac{tr(K)}{N}$ and for $i= \ell_1,\ell_2, \ldots, \ell_m, \;  \langle g''_i,f_i \rangle = \langle g'_i + \bar{c_i}v,f_i \rangle = \langle g'_i,f_i \rangle + \bar{c_i}\langle v,f_i \rangle = \langle g'_i,f_i \rangle.$ Further, for $ i \in \{1,2,\ldots,N\}\setminus \{i_1,i_2,\ell_1,\ldots,\ell_m\},\;\langle g''_{i},f_{i} \rangle = \langle g'_{i},f_{i} \rangle.$ This suggests that $I_{G'}\subset I_{G''}$ and $i_2 \in I_{G''}.$ In other words, $|I_{G''}| > |I_{G'}|.$ By iteratively applying this procedure, we eventually obtain after at most \( N-1 \) steps, a dual \( \tilde{G} \) for which \( |I_{\tilde{G}}| = N \). This implies that \( r_1^q(F, \tilde{G}) = 1 \). Consequently, even in this scenario, we conclude that \( \tilde{r}_1(F) = \dfrac{\operatorname{tr}(K)}{N} \), and the dual \( \tilde{G} \) is indeed a $1-$erasure spectrally optimal \( K-\)dual of \( F \).

		\end{proof}

Next we want to extend the preceding theorem to an arbitrary $K-$frame \( F \) through its partitioning into linearly connected subsets. As established in \cite{sali}, any frame \( F = \{f_i\}_{i=1}^N \) for a Hilbert space \( \mathcal{H}_n \) admits a partition \( \{\Lambda_k\}_{k=1}^J \) of the index set \( \{1,2,\ldots,N\} \), unique up to permutations, such that each subset \( \{f_i\}_{i \in \Lambda_k} \), for \( k = 1,2,\ldots,J \), forms a linearly connected frame. Moreover, this decomposition induces an orthogonal direct sum of \( \mathcal{H}_n \) given by
\[
\mathcal{H}_n = \bigoplus_{i=1}^J H_i, \quad \text{where } H_i = \operatorname{span} \{f_j : j \in \Lambda_i\}.
\]

Given this decomposition of \( \mathcal{H}_n \) corresponding to a $K-$frame \( F \), we define
\[
\delta_{i}^K := \frac{\operatorname{tr}(K|_{H_i})}{|\Lambda_i|}, \quad \text{for } i = 1,2,\ldots,J,
\]
where \( \operatorname{tr}(K|_{H_i}) \) denotes the trace of the restriction of \( K \) to the subspace \( H_i \), and \( |\Lambda_i| \) is the cardinality of the corresponding index set. The sequence \( \{\delta_{i}^K\}_{i=1}^J \) is referred to as the \textit{$K-$redundancy distribution} of the frame \( F \).

	\begin{lem}\label{lemma4point9}
		Let $F = \{f_i\}_{i=1}^N $ be a $K-$frame for $\mathcal{H}_n,$ where $K$ be a positive semi-definite operator. Let $G$ be a $K-$dual of $F.$ Set $\Lambda_G = \left\{i : \langle f_i,g_i \rangle = \dfrac{tr(K)}{N}\right\}.$ Suppose $\left|\Lambda_{G}^c\right| >1$ and $i_1,i_2 \in \Lambda_{G}^c$ such that $f_{i_1}$ and $f_{i_2}$ is linearly connected. Then there exists a dual $G'$ such that $\left|\Lambda_{G'}\right| > \left|\Lambda_{G}\right|.$
	\end{lem}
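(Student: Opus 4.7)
The plan is to adapt the single-step perturbation construction already used inside the proof of Theorem~\ref{theorem4point3}, since this lemma is essentially that subroutine phrased in its own right. The input data are a $K$-dual $G$ and two indices $i_1, i_2 \in \Lambda_G^c$ with $f_{i_1}$ linearly $F$-connected to $f_{i_2}$, and the task is to perturb $G$ into a new $K$-dual $G'$ satisfying $\Lambda_G \subsetneq \Lambda_{G'}$, by pulling $i_2$ into the agreement set without disturbing any existing agreement.

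First, linear $F$-connectivity of $f_{i_1}$ to $f_{i_2}$ supplies nonzero scalars $c, c_{\ell_1}, \ldots, c_{\ell_m}$ and indices $\ell_1, \ldots, \ell_m$ with
\[
f_{i_1} = c\, f_{i_2} + \sum_{k=1}^m c_{\ell_k}\, f_{\ell_k}, \qquad \{f_{i_2}, f_{\ell_1}, \ldots, f_{\ell_m}\} \text{ linearly independent.}
\]
Because $f_{i_2} \notin \operatorname{span}\{f_{\ell_1}, \ldots, f_{\ell_m}\}$, I can select a vector $v \in \mathcal{H}_n$ with $\langle v, f_{\ell_k}\rangle = 0$ for every $k$ and
\[
\langle v, f_{i_2}\rangle = \frac{1}{\bar c}\left(\frac{\operatorname{tr}(K)}{N} - \langle g_{i_2}, f_{i_2}\rangle\right),
\]
which is nonzero precisely because $i_2 \in \Lambda_G^c$.

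I would then define $G' = \{g_i + v_i\}_{i=1}^N$, where the corrections are supported on $\{i_1, i_2, \ell_1, \ldots, \ell_m\}$ via $v_{i_1} = -v$, $v_{i_2} = \bar c\, v$, $v_{\ell_k} = \overline{c_{\ell_k}}\, v$, and $v_i = 0$ otherwise. The duality identity $\sum_i \langle f, v_i\rangle f_i = 0$ then falls out by factoring $\langle f, v\rangle$ and invoking the linear dependence for $f_{i_1}$, so $G'$ is again a $K$-dual of $F$. Evaluating $\langle f_i, g'_i\rangle$ index by index: for $i \notin \{i_1, i_2, \ell_1, \ldots, \ell_m\}$ nothing changes; for $i = \ell_k$ the correction vanishes by orthogonality of $v$ to $f_{\ell_k}$; and for $i = i_2$ the prescribed value of $\langle v, f_{i_2}\rangle$ forces $\langle f_{i_2}, g'_{i_2}\rangle = \operatorname{tr}(K)/N$. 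Consequently $\Lambda_G \subseteq \Lambda_{G'}$ and $i_2 \in \Lambda_{G'} \setminus \Lambda_G$, delivering the required strict inequality $|\Lambda_{G'}| > |\Lambda_G|$.

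The one point that needs care is the conjugation bookkeeping in the duality check. The use of the conjugated coefficients $\bar c$ and $\overline{c_{\ell_k}}$ on the perturbation side is what makes $\sum_i \langle f, v_i\rangle f_i$ collapse to $\langle f, v\rangle\bigl(-f_{i_1} + c\, f_{i_2} + \sum_k c_{\ell_k} f_{\ell_k}\bigr) = 0$; interchanging a linear and an antilinear slot anywhere in the computation would destroy this cancellation. Everything else reduces to routine inner-product bookkeeping, together with the observation that $\operatorname{tr}(K)/N$ is real (since $K \geq 0$), so the condition defining $\Lambda_G$ is symmetric in the two arguments.
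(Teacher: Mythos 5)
Your proposal is correct and follows essentially the same route as the paper's own proof: the same perturbation $v_{i_1}=-v$, $v_{i_2}=\bar c\,v$, $v_{\ell_k}=\overline{c_{\ell_k}}\,v$ supported on the linear-connectivity relation, the same choice of $v$ orthogonal to the $f_{\ell_k}$ with $\langle v, f_{i_2}\rangle$ prescribed to force $\langle g'_{i_2}, f_{i_2}\rangle = \operatorname{tr}(K)/N$, and the same index-by-index verification that no existing agreement is disturbed. Your added remarks on the conjugation bookkeeping and on $\operatorname{tr}(K)/N$ being real are accurate clarifications of points the paper leaves implicit.
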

	\begin{proof}
		As $f_{i_1}$ and $f_{i_2}$ are linearly $F-$connected, there exist $f_{\ell_1},f_{\ell_2},\ldots,f_{\ell_m}$ in $F$ such that
		\begin{align}\label{eqn4point13}
			f_{i_1} = cf_{i_2} + \sum\limits_{1 \leq k \leq m}c_{\ell_k}f_{\ell_k},\;\;c,c_k \neq 0
		\end{align}
	and $\{f_{i_2}, f_{\ell_1},f_{\ell_2},\ldots,f_{\ell_m}\}$ is linearly independent. As, $f_{i_2} \notin span\{f_{\ell_1},f_{\ell_2},\ldots,f_{\ell_m}\},$ there exists $u\in \mathcal{H}_n$ such that $\langle u, f_{\ell_k} \rangle =0, $ for all $1 \leq k \leq m$ and $\langle u,f_{i_2} \rangle = \dfrac{1}{\bar{c_2}}\left(\dfrac{tr(K)}{N} - \langle g_{i_2},f_{i_2} \rangle  \right) \neq 0.$ Taking $U =\{u_i\}_{i=1}^N$ as
	\begin{align*}
		u_i = \begin{cases}
			-u, \;\;& for \;i=i_1,\\
			\bar{c_2}u,\;\; & for \;i= i_2, \\
			\bar{c_{i}},u \;\;& for \; i=\ell_k, 1 \leq k \leq m, \\
			0,\;\; &\text{otherwise},
		\end{cases}
	\end{align*}
     we get $G' = \{g_i + u_i\}_{i=1}^N$ as a dual of $F$ for,
\begin{align*}
	\sum\limits_{1\leq i \leq N} \langle f, u_i \rangle f_i =  \langle f, -u \rangle f_{i_1} +  \langle f, \bar{c_2}u \rangle f_{i_2} +\sum\limits_{1\leq k \leq m} \langle f, \overline{c_{\ell_k}}u \rangle f_{\ell_k}   = \langle f, u \rangle \left(\sum\limits_{1\leq k \leq m} c_{\ell_k} f_{\ell_k} + cf_{i_2} - f_{i_1} \right) =0,\; f \in \mathcal{H}_n,
\end{align*}
by \eqref{eqn4point13}. Now, $\langle g'_{i_2},f_{i_2} \rangle = \langle g_{i_2}+\bar{c_2}u,f_{i_2} \rangle = \dfrac{tr(K)}{N}$ and for $i= \ell_1,\ell_2, \ldots, \ell_m, \;  \langle g'_i,f_i \rangle = \langle g'_i + \bar{c_i}u,f_i \rangle = \langle g'_i,f_i \rangle + \bar{c_i}\langle u,f_i \rangle = \langle g'_i,f_i \rangle.$ Further, for $ i \in \{1,2,\ldots,N\}\setminus \{i_1,i_2,\ell_1,\ldots,\ell_m\},\;\langle g'_{i},f_{i} \rangle = \langle g_{i},f_{i} \rangle.$ This suggests that $\Lambda_{G}\subset \Lambda_{G'}$ and $i_2 \in \Lambda_{G'}.$ In other words, $|\Lambda_{G'}| > |\Lambda_{G}|.$
	\end{proof}

\begin{lem}\label{lemma4point10}
		Let $F = \{f_i\}_{i=1}^N $ be a $K-$frame for $\mathcal{H}_n,$ where $K$ be a positive semi-definite operator. Let $\mathcal{H}_n = H_1 \bigoplus H_2 \bigoplus \cdots \bigoplus H_i,$ where  ${H_i}'s$ are from linearly connected decomposition of $F.$ Let each $H_i,\,1\leq i \leq J$ is $K-$invariant. Then there exists a $K-$dual $G_j$ of $F_j$ such that $\langle g_i, f_i \rangle = \delta_{j}^K,$ for all $i \in \Lambda_j$ and for each $1 \leq j \leq J.$
		Moreover, there exist a $K-$dual $G'$ of $F$ such that $\langle g'_i , f_i \rangle = \delta_{j}^K,$ for all $i \in \Lambda_j.$ 
\end{lem}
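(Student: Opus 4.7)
The plan is to build the global $K$-dual $G'$ by applying Theorem \ref{theorem4point3} piecewise on each linearly connected block $F_j=\{f_i\}_{i\in\Lambda_j}$, after first verifying that the block is itself a linearly connected $K|_{H_j}$-frame for $H_j$. The $K$-invariance of each $H_j$ is what makes these local reductions meaningful.

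First I would check that $F_j$ is a $K|_{H_j}$-frame for $H_j$. Because the decomposition $\mathcal{H}_n=\bigoplus_{i=1}^J H_i$ is orthogonal and $f_i\in H_j$ exactly when $i\in\Lambda_j$, for every $h\in H_j$ we have $\langle h,f_i\rangle=0$ for $i\notin\Lambda_j$, so the frame inequalities for $F$ restricted to $h\in H_j$ reduce to frame inequalities for $F_j$ with operator $K|_{H_j}$; positivity of $K$ passes to $K|_{H_j}$ by the invariance assumption. Since the linearly connected decomposition guarantees that $F_j$ is a linearly connected frame for $H_j$, Theorem \ref{theorem4point3} applies: there exists a $K|_{H_j}$-dual $G_j=\{g_i\}_{i\in\Lambda_j}\subset H_j$ of $F_j$ with
\[
\langle g_i,f_i\rangle=\frac{\operatorname{tr}(K|_{H_j})}{|\Lambda_j|}=\delta_{j}^{K},\qquad \forall\, i\in\Lambda_j,
\]
which proves the first assertion.

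For the second assertion I would assemble the blocks. Define $G'=\{g'_i\}_{i=1}^N$ by setting $g'_i=g_i$ whenever $i\in\Lambda_j$, where $g_i$ comes from the dual $G_j$ above. I would then verify the $K$-duality identity \eqref{eq:k-dual}: decompose any $f\in\mathcal{H}_n$ as $f=\sum_{j=1}^J h_j$ with $h_j\in H_j$, use orthogonality of the $H_j$'s and the fact that $g'_i\in H_j$ for $i\in\Lambda_j$ to obtain $\langle f,g'_i\rangle=\langle h_j,g_i\rangle$, and then use the $K|_{H_j}$-duality of $G_j$ together with $K$-invariance of $H_j$ (which yields $Kf=\sum_j K|_{H_j}h_j$) to compute
\begin{align*}
\sum_{i=1}^N\langle f,g'_i\rangle f_i
&=\sum_{j=1}^J\sum_{i\in\Lambda_j}\langle h_j,g_i\rangle f_i
=\sum_{j=1}^J K|_{H_j}h_j=Kf.
\end{align*}
Finally, for $i\in\Lambda_j$ one has $\langle g'_i,f_i\rangle=\langle g_i,f_i\rangle=\delta_{j}^{K}$, which is the desired conclusion.

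The only genuinely delicate step is verifying that the pieces $G_j$ can be concatenated into a bona fide $K$-dual of $F$; this is where $K$-invariance of each $H_j$ and orthogonality of the direct sum are both essential. Everything else is a direct translation of Theorem \ref{theorem4point3} to the restricted setting, so I anticipate no further technical difficulty.
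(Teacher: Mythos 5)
Your proposal is correct and follows essentially the same route as the paper: restrict to each block using orthogonality and $K$-invariance to see that $F_j$ is a linearly connected $K|_{H_j}$-frame, invoke Theorem \ref{theorem4point3} to get the block duals $G_j$, and concatenate them (the paper writes this as $\bigcup_j (P_j)^* G^j$, which is the same as your $g'_i = g_i$ for $i \in \Lambda_j$). Your final computation correctly lands on $\sum_j K|_{H_j} h_j = Kf$, whereas the paper's displayed verification has a small slip writing the sum as $f$; your version is the right one.
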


\begin{proof}
	First we will prove that each $F_j = \{f_i\}_{i \in \Lambda_j}  $ is a linearly connected $K|_{H_j}$ frame for $H_j.$ For any $f \in H_j,$
	\begin{align*}
		\sum\limits_{i \in \Lambda_j}|\langle f,f_i \rangle|^2 &= \sum\limits_{i \in \Lambda_1}|\langle f,f_i \rangle|^2 + \sum\limits_{i \in \Lambda_2}|\langle f,f_i \rangle|^2 + \cdots + \sum\limits_{i \in \Lambda_J}|\langle f,f_i \rangle|^2 \\&= \sum\limits_{i =1}^N|\langle f,f_i \rangle|^2 \\&\geq A\|K^*f\|^2 \\&= A\|K^*|_{H_j}f\|^2,
	\end{align*}
for some constant $A>0,$ as $K^*$ is invariant on $H_j.$ Also, $	\sum\limits_{i \in \Lambda_j}|\langle f,f_i \rangle|^2 \leq \|f\|^2 \sum\limits_{i \in \Lambda_j}\|f_i\|^2 \leq B\|f\|^2,$ where $B=\sum\limits_{i=1}^N\|f_i\|^2>0.$ Therefore, $\{f_i\}_{i \in \Lambda_j}$ satisfy the $K|_{H_j}-$frame inequality for $H_j$ for all $1 \leq j \leq J.$ Hence, by Theorem\ref{theorem4point3}, there exist a $K|_{H_j}-$dual $G^j= \{g^j_{i}\}_{i\in \Lambda_j}$ such that $\langle g_i, f_i \rangle = \dfrac{tr(K|_{H_j})}{|\Lambda_j|}= \delta_{j}^K,$ for all $1 \leq j \leq J.$\\

Consider the collection $G= \{g_i\}_{i=1}^N =\bigcup\limits_{j=1}^J (P_{j})^* G^j,$ where $P_j$ is the projection of $\mathcal{H}_n$ onto $H_j.$ We shall show that $G$ is a dual of $F.$  Let $f \in \mathcal{H}_n$ and $x$ has the unique representation $f=f_1 + f_2 +\cdots + f_J,$ where $f_j = P_j(x),\; j=1,2,\ldots,J.$ Then,

\begin{align*}
	\sum_{i=1}^N \langle f,g_i \rangle f_i &=   \sum_{i \in \Lambda_1} \langle f,g_i \rangle f_i +  \sum_{i \in \Lambda_2} \langle f,g_i \rangle f_i + \cdots + \sum_{i \in \Lambda_J} \langle f,g_i \rangle f_i \\&=  \sum_{i \in \Lambda_1} \langle f,{(P_1)}^*g^1_i \rangle f_i +  \sum_{i \in \Lambda_2} \langle f,{(P_2)}^*g^2_i \rangle f_i + \cdots + \sum_{i \in \Lambda_J} \langle f,{(P_J)}^*g^J_i \rangle f_i  \\&=
	\sum_{i \in \Lambda_1} \langle f_1,g^1_i \rangle f_i +  \sum_{i \in \Lambda_2} \langle f_2,g^2_i \rangle f_i + \cdots + \sum_{i \in \Lambda_J} \langle f_J,g^J_i \rangle f_i \\&= f_1+f_2+\cdots +f_J = f,
\end{align*}
by utilizing the dual relationship between the sequences $\{f_i\}_{i \in \Lambda_j}$ and $\{g^j_i\}_{i \in \Lambda_j},$  $1 \leq j \leq J$. Furthermore, for $i \in \Lambda_j,\;1 \leq j \leq J,$ $\langle g_i,f_i \rangle =  \langle (P_j)^* g^j_i, f_i \rangle =  \langle g^j_i,f_i \rangle =  \dfrac{tr(K|_{H_j})}{|\Lambda_j|}= \delta_{j}^K.$ \\

\end{proof}

	\begin{thm}
Let $F = \{f_i\}_{i=1}^N$ be a $K$-frame for $\mathcal{H}_n$, where $K$ is a positive semi-definite operator and $\{\Lambda_j\}$, $\{H_j\}$ denote the index sets and corresponding subspaces arising from a linearly connected decomposition of $F$. Assume that each $H_j$ is invariant under $K$. If $\{\delta_{j}^K\}_{j=1}^J$ denotes the $K-$redundancy distribution of $F$, then $\tilde{r}_{1}(F) = \max\limits_{1 \leq j \leq J} \delta_{j}^K = \max\limits_{1 \leq j \leq J} \dfrac{\operatorname{tr}(K|_{H_j})}{|\Lambda_j|}.$
\end{thm}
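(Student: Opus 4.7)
The plan is to prove the claimed identity by establishing the two matching bounds $\tilde{r}_{1}(F) \leq \max_{j} \delta_{j}^{K}$ and $\tilde{r}_{1}(F) \geq \max_{j} \delta_{j}^{K}$, exploiting the orthogonal decomposition $\mathcal{H}_n = \bigoplus_{j=1}^{J} H_j$ to reduce each inequality to a trace identity on the individual $K$-invariant summands.

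For the upper bound, I would directly invoke Lemma~\ref{lemma4point10}, which supplies a $K$-dual $G' = \{g'_i\}_{i=1}^{N}$ of $F$ with $\langle g'_i, f_i \rangle = \delta_j^{K}$ for every $i \in \Lambda_j$ and every $1 \leq j \leq J$. Because $K$ is positive semi-definite and each $H_j$ is $K$-invariant, the restriction $K|_{H_j}$ is positive semi-definite, so each $\delta_j^{K} \geq 0$. Then \eqref{equation3point1} yields
\[
r_{1}(F, G') \;=\; \max_{1 \leq i \leq N} |\langle f_i, g'_i\rangle| \;=\; \max_{1 \leq j \leq J} \delta_j^{K},
\]
which forces $\tilde{r}_{1}(F) \leq \max_{j} \delta_j^{K}$.

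For the lower bound, I would take an arbitrary $K$-dual $G = \{g_i\}_{i=1}^{N}$ of $F$ and decouple the dual relation $Kf = \sum_{i} \langle f, g_i \rangle f_i$ across the orthogonal summands. For $f \in H_j$, since $K(H_j) \subset H_j$ and $f_i \in H_k \perp H_j$ whenever $i \in \Lambda_k$ and $k \neq j$, projecting the identity onto each $H_k$ forces $\sum_{i \in \Lambda_k} \langle f, g_i \rangle f_i = 0$ for $k \neq j$ and $K|_{H_j} f = \sum_{i \in \Lambda_j} \langle f, g_i \rangle f_i$. Because $\langle f, g_i \rangle = \langle f, P_{H_j} g_i \rangle$ for every $f \in H_j$, the sequence $G^{j} := \{P_{H_j} g_i\}_{i \in \Lambda_j}$ is a genuine $K|_{H_j}$-dual of $F_j := \{f_i\}_{i \in \Lambda_j}$ inside $H_j$. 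Applying the trace identity from Remark~\ref{rem3point2} at the level of $H_j$ then gives
\[
\sum_{i \in \Lambda_j} \langle f_i, g_i \rangle \;=\; \sum_{i \in \Lambda_j} \langle f_i, P_{H_j} g_i \rangle \;=\; \operatorname{tr}(K|_{H_j}) \;=\; |\Lambda_j|\, \delta_j^{K},
\]
so the triangle inequality produces
\[
|\Lambda_j| \cdot \max_{i \in \Lambda_j} |\langle f_i, g_i\rangle| \;\geq\; \sum_{i \in \Lambda_j} |\langle f_i, g_i\rangle| \;\geq\; \left|\sum_{i \in \Lambda_j} \langle f_i, g_i\rangle\right| \;=\; |\Lambda_j|\, \delta_j^{K}.
\]
Maximizing over $j$ yields $r_{1}(F, G) \geq \max_{j} \delta_j^{K}$, and combining the two bounds delivers the equality.

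The principal technical step is the decoupling: rigorously justifying that the projected sequence $\{P_{H_j} g_i\}_{i \in \Lambda_j}$ functions as a bona fide $K|_{H_j}$-dual of $F_j$ inside $H_j$. Both the orthogonality of the $H_k$'s in the linearly connected decomposition and the $K$-invariance of each $H_j$ are indispensable here; without either one, the block-wise identity $K|_{H_j} f = \sum_{i \in \Lambda_j} \langle f, g_i\rangle f_i$ and the ensuing trace accounting $\sum_{i \in \Lambda_j} \langle f_i, g_i\rangle = \operatorname{tr}(K|_{H_j})$ break down. Once this reduction is secured, the remainder is a routine application of the trace identity and triangle inequality already used repeatedly in Section~3.
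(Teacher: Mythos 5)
Your proposal is correct and follows essentially the same route as the paper: the upper bound via the dual supplied by Lemma~\ref{lemma4point10}, and the lower bound by projecting an arbitrary $K$-dual onto each $K$-invariant summand $H_j$ to obtain a $K|_{H_j}$-dual of $F_j$ and then applying the trace identity with the triangle inequality. The only cosmetic difference is that the paper invokes the lower bound of Theorem~\ref{thm5point2} on each block, whereas you re-derive that bound directly; the substance is identical.
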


	\begin{proof}
		   For any $K-$dual $G=\{g_i\}_{i=1}^N$ of $F,\,$ $\left\{P_j g_i\right\}_{i \in \Lambda_j}$ is a $K|_{H_j}-$ dual of $F_j=\{f_i\}_{i \in \Lambda_j},$ for all $1 \leq j \leq J.$ For any $h \in H_j,$
		  \begin{align*}
		  	K|_{H_j} (h) &= K(h) \\&= \sum\limits_{i=1}^N \langle h, g_i \rangle f_i \\&= \sum\limits_{i=1}^N \langle P_j h, g_i \rangle f_i \\& = \sum\limits_{i \in \Lambda_1} \langle  h, P_1 g_i \rangle f_i + \sum\limits_{i \in \Lambda_2} \langle  h, P_2 g_i \rangle f_i + \cdots +\sum\limits_{i \in \Lambda_J} \langle  h, P_J g_i \rangle f_i \\&= \sum\limits_{i \in \Lambda_j} \langle  h, P_j g_i \rangle f_i  .
		  \end{align*}
	  It is easy to see that $\langle P_j g_i,f_i \rangle = \langle  g_i, P_j f_i \rangle  =\langle  g_i,f_i \rangle ,$ for all $ i \in \Lambda_j$ and $1\leq j \leq J.$ Thus,
	  $ r_1 \left(F_j, \left\{P_j g_i\right\}_{i \in \Lambda_j} \right) = \max\limits_{i \in \Lambda_j} \left| \langle f_i, p_j g_i \rangle   \right| \geq  \dfrac{tr(K|_{H_j})}{|\Lambda_j|}= \delta_{j}^K$ and hence, $\max\limits_{i \in \Lambda_j} \left|  \langle g_i, f_i \rangle \right| \geq \delta_{j}^K,\,$ for all $1 \leq j \leq J.$ Consequently, $r_{1} (F,G) = \max\limits_{1 \leq i \leq N} \left|  \langle g_i, f_i \rangle \right| = \max\limits_{1 \leq j \leq J} \max\limits_{i \in \Lambda_j} \left|  \langle g_i, f_i \rangle \right|\geq \max\limits_{1 \leq j \leq J}\delta_{j}^K.$  By Lemma\ref{lemma4point10}, $F$ has a $K-$dual $G'$ such that $\langle g'_i, f_i \rangle = \delta_{j}^K,$ for all $i \in \Lambda_j$ and for each $1 \leq j \leq J.$  Thus  $r_{1} (F,G') =\max\limits_{1 \leq j \leq J} \delta_{j}^K$ and hence, $\tilde{r}_{1} (F) = \max\limits_{1 \leq j \leq J} \delta_{j}^K.$

	\end{proof}

\noindent	The following theorem establishes a sufficient condition ensuring that the canonical dual serves as a $1-$erasure spectrally optimal dual for a given Parseval \( K-\)frame \( F = \{f_i\}_{i=1}^N \). Define $M = \max \left\{ \left\| K^{\dagger\,1/2} f_i \right\|^2 : 1 \leq i \leq N \right\},\\ J_1 = \left\{ i : \left\| K^{\dagger\,1/2} f_i \right\|^2 = M \right\}, \; J_2 = \{1,2,\dots,N\} \setminus J_1$ and $W_j = \operatorname{span} \left\{ f_i : i \in J_j \right\}.$

	\vskip 1em
	\begin{thm}\label{prop3point4}
		Let $F = \{f_i\}_{i=1}^N $ be a Parseval $K-$frame for $\mathcal{H}_n,$ where $K$ be a positive semi-definite operator on $\mathcal{H}_n.$ If $W_1 \cap W_2 = \{0\}$, then the canonical $K-$dual is indeed a $1-$erasure spectrally optimal $K-$dual of $F$.

	\end{thm}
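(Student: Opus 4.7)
The plan is to mirror the argument of Proposition~\ref{prop4point1}, replacing the operator-norm bound $\|f_i\|\,\|g_i\|$ with the spectral-radius quantity $|\langle f_i, g_i\rangle|$. Since $K\geq 0$ implies $K^\dag\geq 0$, we have $\langle f_i, K^\dag f_i\rangle = \|K^{\dag\,1/2} f_i\|^2$, and hence $r_1(F, K^\dag F) = \max_i \langle f_i, K^\dag f_i\rangle = M$. So it suffices to show that every $K$-dual $G$ of $F$ satisfies $r_1(F, G)\geq M$. Suppose for contradiction that there is a $K$-dual $G = \{K^\dag f_i + u_i\}_{i=1}^N$ (with $\sum_{i=1}^N \langle f, u_i\rangle f_i = 0$ for all $f\in\mathcal{H}_n$) with $r_1(F, G) < M$.

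For each $i\in J_1$ we have $\langle f_i, K^\dag f_i\rangle = M$ (a positive real), so writing $\langle f_i, u_i\rangle = a_i + i b_i$ with $a_i, b_i \in \mathbb{R}$, the strict inequality $|\langle f_i, K^\dag f_i + u_i\rangle| < M$ becomes $(M + a_i)^2 + b_i^2 < M^2$, which gives $2M a_i + a_i^2 + b_i^2 < 0$ and hence $\mathrm{Re}\langle f_i, u_i\rangle = a_i < 0$ for every $i\in J_1$. Since $J_1$ is non-empty, summing yields $\sum_{i\in J_1}\mathrm{Re}\langle f_i, u_i\rangle < 0$.

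Now invoke the hypothesis $W_1\cap W_2 = \{0\}$ to split the biorthogonality: rewrite $\sum_{i=1}^N \langle f, u_i\rangle f_i = 0$ as $\sum_{i\in J_1}\langle f, u_i\rangle f_i + \sum_{i\in J_2}\langle f, u_i\rangle f_i = 0$; the first sum lies in $W_1$, the second in $W_2$, so both vanish identically in $f$. In operator notation this reads $\Theta_{F_1}^*\Theta_{U_1} = 0$, where $F_1 = \{f_i\}_{i\in J_1}$ and $U_1 = \{u_i\}_{i\in J_1}$. Using the trace identity (exactly as in the proof of Proposition~\ref{prop4point1}), $\sum_{i\in J_1}\langle f_i, u_i\rangle = \mathrm{tr}(\Theta_{U_1}\Theta_{F_1}^*) = \mathrm{tr}(\Theta_{F_1}^*\Theta_{U_1}) = 0$. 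Taking real parts contradicts the strict inequality from the previous paragraph, completing the proof.

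The main obstacle is simply making the subspace decomposition rigorous (the $W_1\cap W_2=\{0\}$ step), after which the rest is a short quadratic expansion plus a trace computation; the structural difference from the operator-norm analogue is only that the bound $(M+a_i)^2+b_i^2 < M^2$ directly forces $a_i<0$, whereas in Proposition~\ref{prop4point1} the bound on $\|K^\dag f_i + u_i\|^2$ played that role.
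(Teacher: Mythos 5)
Your proof is correct and follows essentially the same route as the paper's: both use $W_1\cap W_2=\{0\}$ to isolate $\sum_{i\in J_1}\langle f,u_i\rangle f_i=0$, the trace identity to get $\sum_{i\in J_1}\langle f_i,u_i\rangle=0$, and the observation that $|M+\langle f_i,u_i\rangle|<M$ forces $\mathrm{Re}\,\langle f_i,u_i\rangle<0$. The only difference is organizational — you argue by contradiction while the paper argues directly via a case analysis on the signs of $\mathrm{Re}\,\langle f_i,u_i\rangle$ — which does not change the substance.
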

	
	\begin{proof}
		Let  $G = \{g_i\}_{i=1}^N = \{K^\dag f_i + u_i \}_{i=1}^N$ be a dual of $F,$   wherein $\displaystyle{ \sum_{i=1}^N \langle f,u_i \rangle f_i = 0} ,\; \text{for all }f \in \mathcal{H}_n.$ Further, using the condition $W_1 \cap W_2 = \{0\},$ we obtain $\sum\limits_{i \in  J_1 } \langle f,u_i \rangle f_i =0, \;f \in \mathcal{H}_n.$ This leads to $\Theta_{F_1}^* \Theta_{U_1} = 0, \; \textit{where} \; F_1 = \{f_i\}_{i \in  J_1}\; \textit{and} \;\; U_1 = \{u_i\}_{ i \in  J_1},$ and so	$	tr\left(\Theta_{F_1}^* \Theta_{U_1}\right) = tr\left( \Theta_{U_1} \Theta_{F_1}^* \right)  =\sum\limits_{i \in  J_1 } \langle f_i , u_i \rangle = 0.$ Consequently,
		\begin{align}\label{equation4point14*}
			Re \left(\sum\limits_{i \in  J_1 } \langle f_i , u_i \rangle \right) = 0.
		\end{align}
		It is easy to see that
		\begin{eqnarray*}
			\max\limits_{1 \leq i \leq N} \;|\langle f_i , g_i \rangle |\; \geq \;  \max_{i \in J_1} \; |\langle f_i, K^\dag f_i  \rangle + \langle f_i,u_i \rangle| =  \max_{i \in J_1} \left|  M +  \langle f_i,u_i\rangle\right|.
		\end{eqnarray*}
		If  $Re  \langle f_i , u_i \rangle =0,\,\forall i \in J_1,$ then $\max\limits_{1 \leq i \leq N} \; |\langle f_i , g_i \rangle | \geq M.$ Now, assume that there is an element \( j \in J_1 \) satisfying $Re  \langle f_j , u_j \rangle > 0,$ then $\max\limits_{1 \leq i \leq N} |\langle f_i , g_i \rangle | >M.$ We observe that if $Re \langle f_j,u_j \rangle <0$ for some $j \in J_1,$ then there exists a $j' \in J_1$ for which $Re \langle f_{j'}, u_{j'}\rangle >0,$ by \eqref{equation4point14*}. Therefore, $r_{1} (F,G) \geq M = r_{1} (F, K^\dag F) .$ Consequently, the canonical dual emerges as a $1-$erasure spectrally optimal $K-$dual corresponding to $F$.

	\end{proof}
   The theorem below states a sufficient condition that ensures the canonical dual of a Parseval $K-$frame is $1-$erasure spectrally optimal $K-$dual. It also establishes the non-uniqueness of such optimal duals by showing that their number is actually uncountable.
	
		\begin{thm}\label{thm3point4}
		Let $F = \{f_i\}_{i=1}^N $ be a Parseval $K-$frame for $\mathcal{H}_n$ with $N>n$ and $K$ be a positive semi-definite operator on $\mathcal{H}_n.$ Let $M, J_1, J_2, W_1, W_2$ be as in Theorem \ref{prop3point4}. If $W_1 \cap W_2 = \{0\}$ and $\{f_i\}_{i \in J_1}$ is linearly independent then the canonical dual is a $1-$erasure spectrally optimal $K-$dual of $F.$ Moreover, the number of  $1-$erasure spectrally optimal $K-$dual of $F$ is uncountable.
	\end{thm}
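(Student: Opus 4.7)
The first assertion follows immediately from Theorem~\ref{prop3point4}, since the hypothesis $W_1 \cap W_2 = \{0\}$ is already sufficient there. So the substantive work is to produce uncountably many $1-$erasure spectrally optimal $K-$duals of $F$. My plan is to construct a one-parameter family $\{G_t\}_{|t|<\delta}$ of non-canonical $K-$duals of $F$, each of which agrees with $K^\dag F$ on $J_1$ but perturbs it on $J_2$, and to verify that every member of this family still attains the minimal error $M = r_1(F, K^\dag F)$. The construction parallels the one used in Theorem~\ref{theorem4point3op} for the operator-norm case.

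First I would exhibit a nontrivial sequence $\{u_i\}_{i=1}^N \subset \mathcal{H}_n$ satisfying $u_i = 0$ for $i \in J_1$ and $\sum_{i=1}^N \langle f, u_i\rangle f_i = 0$ for every $f \in \mathcal{H}_n$. For this it suffices to find scalars $\{c_i\}_{i=1}^N$, not all zero and supported in $J_2$, with $\sum_i c_i f_i = 0$; then setting $u_i := \overline{c_i}\, v$ for any nonzero $v \in \mathcal{H}_n$ gives $\sum_i \langle f, u_i\rangle f_i = \langle f, v\rangle \sum_i c_i f_i = 0$, so $G_t := \{K^\dag f_i + t u_i\}_{i=1}^N$ is a $K-$dual of $F$ for every $t \in \mathbb{R}$. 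Such a $\{c_i\}$ exists by a dimension count: since $\{f_i\}_{i \in J_1}$ is linearly independent and $W_1 \cap W_2 = \{0\}$, if $\{f_i\}_{i \in J_2}$ were also linearly independent then the whole family $\{f_i\}_{i=1}^N$ would be linearly independent, contradicting $N > n$. Hence $\{f_i\}_{i \in J_2}$ is linearly dependent, yielding the required scalars.

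Next I would verify spectral optimality of each $G_t$ for small $|t|$. For $i \in J_1$ we have $u_i = 0$, so since $K$ (and hence $K^\dag$) is positive semi-definite, $\langle f_i, K^\dag f_i + t u_i\rangle = \|K^{\dag 1/2} f_i\|^2 = M$. For $i \in J_2$, the strict inequality $\|K^{\dag 1/2} f_i\|^2 < M$ together with continuity of $t \mapsto |\langle f_i, K^\dag f_i + t u_i\rangle|$ yields a $\delta_i > 0$ with $|\langle f_i, K^\dag f_i + t u_i\rangle| < M$ whenever $|t| < \delta_i$. Setting $\delta := \min_{i \in J_2} \delta_i > 0$ and invoking \eqref{equation3point1} then gives $r_1(F, G_t) = M = \tilde{r}_1(F)$ for every $|t| < \delta$. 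Since $u_{i_0} \neq 0$ for some $i_0 \in J_2$, the assignment $t \mapsto G_t$ is injective on $(-\delta,\delta)$, producing an uncountable family of $1-$erasure spectrally optimal $K-$duals.

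The main obstacle I anticipate is the existence step for $\{c_i\}$ supported in $J_2$: a generic linear relation among $\{f_i\}_{i=1}^N$ need not vanish on $J_1$, so one must exploit both the direct sum condition $W_1 \cap W_2 = \{0\}$ and the linear independence of $\{f_i\}_{i \in J_1}$ to force all the dependence into $J_2$. Once this perturbation is secured, the continuity estimate and the explicit formula \eqref{equation3point1} for $r_1$ make the remaining verification essentially routine.
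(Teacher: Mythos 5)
Your proposal is correct and follows essentially the same route as the paper: reduce the first claim to Theorem~\ref{prop3point4}, perturb the canonical dual only on $J_2$ by a one-parameter family $G_t=\{K^\dag f_i+tu_i\}$, and use continuity to keep $r_1(F,G_t)=M$ for small $|t|$. In fact you supply a detail the paper leaves implicit, namely the dimension-count argument (linear independence on $J_1$ plus $W_1\cap W_2=\{0\}$ plus $N>n$ forces a linear relation supported in $J_2$) that guarantees the required non-canonical dual exists.
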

	\begin{proof}
		By Theorem\ref{prop3point4}, the canonical dual $K^\dag F$ is a $1-$erasure spectrally optimal $K-$dual of $F.$ As $N>n,$ we can find a $K-$dual  $G=  \{g_i\}_{i=1}^N = \{K^\dag f_i + u_i \}_{i=1}^N $ of $F$ with $u_i = 0$ for all $i \in J_1$ and $u_i \neq 0$ for some $i \in J_2.$ It is easy to see that for any $t \in \mathbb{R},$ $G_t =  \{K^\dag f_i + tu_i \}_{i=1}^N $ is a $K-$dual of $F$ as $\sum\limits_{i=1}^N \langle f, tu_i\rangle f_i = \sum\limits_{i \in J_1} \langle f, tu_i\rangle f_i + \sum\limits_{i \in J_2} \langle f, tu_i\rangle f_i =  t\sum\limits_{i \in J_2} \langle f, u_i\rangle f_i =0. $ For $i \in J_1,\,$ $\left| \langle K^\dag f_i + u_i , f_i \rangle \right| = M.$ For each $i \in J_2,\,$  $\left| \langle K^\dag f_i , f_i \rangle \right| < M.$ As $t \to \left| \langle K^\dag f_i +tu_i , f_i \rangle \right|$ is a continuous function on $\mathbb{R},$ then  there exists $\alpha_i > 0$ such that $ \left| \langle K^\dag f_i +tu_i , f_i \rangle \right| < M,$ when $t \in (-\alpha_i, \alpha_i),$   for each $i \in J_2.$ Taking $\alpha = \min\limits_{i \in J_2} \alpha_i,$ we can say  for each $i \in J_2,\,$ $\left| \langle K^\dag f_i + tu_i , f_i \rangle \right| < M,\,\forall t \in (-\alpha, \alpha).$ Therefore, for any $t \in  (-\alpha, \alpha),\,$ $r_{1}(F,G_t) = M = r_{1}(F,K^\dag F)$ and hence $G_t$ is a $1-$erasure  spectrally optimal $K-$dual of $F.$
	\end{proof}

	\begin{cor}
Let $F = \{f_i\}_{i=1}^N$ be a Parseval $K-$frame for $\mathcal{H}_n$. Suppose  that $\|f_i\|\,\|K^\dag f_i\|$ is a constant for all $1 \leq i \leq N$. Then the canonical $K-$dual $K^\dag F$ of $F$ is the unique $1-$erasure optimal $K-$dual of $F$ with respect to the operator norm.
\end{cor}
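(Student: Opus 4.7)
The plan is to derive this corollary as an immediate specialization of the equivalence theorem proved earlier in this subsection, which characterizes when the canonical $K$-dual is the unique $1$-erasure optimal $K$-dual under the operator norm. Recall that this characterization asserts uniqueness precisely when $V_1 \cap V_2 = \{0\}$ and the family $\{f_i\}_{i \in I_2}$ is linearly independent, where $I_1 = \{i : \|f_i\|\,\|K^\dag f_i\| = L\}$ with $L = \max_{1 \leq i \leq N} \|f_i\|\,\|K^\dag f_i\|$, $I_2$ is the complementary index set, and $V_j = \operatorname{span}\{f_i : i \in I_j\}$.

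First, I would observe that the constancy hypothesis forces the common value of $\|f_i\|\,\|K^\dag f_i\|$ to coincide with $L$, simply because the maximum of a constant sequence equals that constant. Consequently, $I_1 = \{1, 2, \ldots, N\}$ and $I_2 = \emptyset$. This in turn collapses the two subspaces to $V_1 = \operatorname{span}\{f_i : 1 \leq i \leq N\}$ and $V_2 = \operatorname{span}\emptyset = \{0\}$.

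Next, I would verify both hypotheses of the equivalence. The intersection $V_1 \cap V_2 = V_1 \cap \{0\} = \{0\}$ is trivial, and the empty family $\{f_i\}_{i \in I_2}$ is vacuously linearly independent. Both conditions being satisfied, the equivalence yields at once that $K^\dag F$ is the unique $1$-erasure optimal $K$-dual of $F$ with respect to the operator norm.

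There is essentially no substantive obstacle here, as the statement is a one-line specialization of the earlier characterization: once one notes that constancy of $\|f_i\|\,\|K^\dag f_i\|$ forces $I_2 = \emptyset$, the hypotheses collapse to trivialities. The only minor subtlety worth mentioning explicitly in a formal write-up is the standard convention that the empty family of vectors is linearly independent with span $\{0\}$, which is precisely what makes the equivalence applicable in this degenerate configuration.
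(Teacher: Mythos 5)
Your proposal is correct and is precisely the intended derivation: the paper states this as an immediate corollary of the equivalence theorem (uniqueness iff $V_1 \cap V_2 = \{0\}$ and $\{f_i\}_{i \in I_2}$ linearly independent) and gives no separate proof, and your observation that constancy of $\|f_i\|\,\|K^\dag f_i\|$ forces $I_2 = \emptyset$, making both hypotheses hold vacuously, is exactly the right specialization. Your explicit remark about the conventions for the empty family (vacuous linear independence, span equal to $\{0\}$) is a welcome clarification that the paper leaves implicit.
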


	The following proposition presents a necessary condition for the spectral optimality of the canonical $K-$dual. It shows that under certain linear dependence among frame elements, the canonical $K-$dual fails to be $1-$erasure spectrally optimal.
	
	\begin{prop}
Let $F = \{f_i\}_{i=1}^N$ be a Parseval $K-$frame for $\mathcal{H}_n$, where $K \in \mathcal{B}(\mathcal{H}_n)$ is a positive semi-definite operator. Assume that the subset $\{f_i\}_{i \in J_1}$ is linearly independent, and there exists a sequence $\{d_i\}_{i=1}^N$ with $d_i \neq 0$ for all $i \in \Lambda_1$ such that $\sum_{i=1}^N d_i f_i = 0.$ Then the canonical $K-$dual $K^\dag F$ of $F$ is not a $1-$erasure spectrally optimal $K-$dual of $F$.
\end{prop}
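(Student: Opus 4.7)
The strategy mirrors that of the analogous proposition in the operator-norm subsection: perturb the canonical $K$-dual along a direction belonging to the kernel of the synthesis operator $(\Theta_F^K)^*$, and show that for a small parameter this perturbation strictly decreases the spectral-radius error. Recall that since $K$ is positive semi-definite, $r_1(F, K^\dag F) = \max_i |\langle f_i, K^\dag f_i\rangle| = \max_i \|K^{\dag\,1/2}f_i\|^2 = M$, attained precisely on the index set $J_1$.

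First, I would promote the relation $\sum_{i=1}^N d_i f_i = 0$ into a family of admissible perturbations. Because $\{f_i\}_{i\in J_1}$ is linearly independent and $d_i \neq 0$ for every $i\in J_1$, the vectors $\{d_i f_i\}_{i\in J_1}$ are linearly independent as well; by Lemma \ref{lem3point6} (applied with $\alpha=-1$) there exists $h \in \mathcal{H}_n$ with $\langle d_i f_i, h\rangle = -1$ for every $i \in J_1$. Define $u_i = \bar{d}_i h$ for $1\leq i \leq N$ and set $G_t = \{K^\dag f_i + t u_i\}_{i=1}^N$ for $t\in\mathbb{R}$. Since
\[
\sum_{i=1}^N \langle f, t u_i\rangle f_i = t\langle f,h\rangle \sum_{i=1}^N d_i f_i = 0, \qquad f\in\mathcal{H}_n,
\]
each $G_t$ is a valid $K$-dual of $F$.

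Next I would estimate the error index by index. For $i\in J_1$,
\[
\langle f_i, K^\dag f_i + t u_i\rangle = M + t\,d_i \langle f_i, h\rangle = M + t\langle d_i f_i, h\rangle = M - t,
\]
so that $|\langle f_i, K^\dag f_i + t u_i\rangle| = |M-t| < M$ for every $t \in (0, 2M)$. For $i \in J_2$, one has $|\langle f_i, K^\dag f_i\rangle| < M$, and by continuity of $t \mapsto |\langle f_i, K^\dag f_i + t u_i\rangle|$ there exists $\alpha_i > 0$ such that this quantity remains strictly below $M$ for $|t| < \alpha_i$. Setting $\tilde{t} = \min\{\min_{i\in J_2}\alpha_i,\, M\}/2 > 0$, we obtain $r_1(F, G_{\tilde{t}}) < M = r_1(F, K^\dag F)$, proving that $K^\dag F$ is not $1$-erasure spectrally optimal.

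The only delicate point is the complex case: the identity $d_i\langle f_i,h\rangle = \langle d_i f_i, h\rangle$ relies on the convention of linearity in the first argument, which is the one used throughout the paper, so Lemma \ref{lem3point6} applies directly. If the convention were conjugate-linear in the first slot, one would apply the lemma to $\{\bar{d}_i f_i\}_{i\in J_1}$ instead; either way, the construction goes through without obstruction. Note also that the hypothesis $d_i \neq 0$ for $i \in J_1$ is exactly what is needed to guarantee that the perturbation moves every maximizing index, while the condition $\sum d_i f_i = 0$ keeps $G_t$ in the affine set of $K$-duals of $F$.
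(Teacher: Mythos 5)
Your proof is correct and follows essentially the same route as the paper: apply Lemma \ref{lem3point6} to $\{d_i f_i\}_{i\in J_1}$ to obtain $h$ making $\langle f_i, \bar d_i h\rangle$ negative on $J_1$, perturb the canonical dual by $t\bar d_i h$ (which stays in the affine family of $K$-duals since $\sum_i d_i f_i=0$), and use continuity on $J_2$ to push the maximum strictly below $M$ for small $t>0$. Your normalization $\langle d_i f_i,h\rangle=-1$ and the explicit restriction to $t\in(0,2M)$ on the $J_1$ indices is in fact slightly more careful than the paper's phrasing, which nominally claims a symmetric interval $(-t_i,t_i)$ there.
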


	\begin{proof}
		Since $\{f_i\}_{i \in \Lambda_1}$ is linearly independent, then $\{d_i f_i\}_{i \in \Lambda_1}$ is also linearly independent and hence there exists a $h' \in \mathcal{H}_n$ such that for all $i \in J_1,$ $\langle f_i, \bar{d}_i h \rangle = \langle d_i K^\dag f_i, h \rangle < 0,$ by Lemma\ref{lem3point6}. Let $V=\{v_i\}_{i=1}^N,$ where $v_i = \bar{d}_i h;1 \leq i \leq N. $ It is easy to see that $\sum\limits_{i=1}^N \langle f, f_i \rangle v_i = \left \langle f, \sum\limits_{i=1}^N d_i f_i \right \rangle h = 0.$ Therefore, $\{K^\dag f_i + tv_i\}_{i=1}^N$ is a $K-$dual of $F$ for all $t.$ Now, for all $i \in \Lambda_1,$
		\begin{align*}
			\langle K^\dag f_i +tv_i, f_i \rangle  = \left\|\left(K^\dag \right)^{\frac{1}{2}} f_i \right\|^2 +  t\langle v_i, f_i \rangle= \left|	\langle K^\dag f_i, f_i \rangle \right| +t \langle u_i, f_i \rangle = M +  t \langle u_i, f_i \rangle
		\end{align*} 
		As $t \to \left| M+ t \langle u_i, f_i \rangle \right|$ is a  continuous  function on $\mathbb{R}$ then there exist $t_i >0 $ such that  $\left| M+ t \langle u_i, f_i \rangle \right| < M$ for all $t \in (-t_i, t_i)$ and for each $i \in J_1.$ As for each $j \in J_2,\;$ $\langle K^\dag f_i , f_i \rangle < M,$ then there exist $t'_j>0$ small enough such that, for any $t \in (-t'_j, t'_j),\;$  $\left| \langle K^\dag f_j +tv_j, f_j \rangle \right| = \left| \langle K^\dag f_j , f_j \rangle + t \langle v_j, f_j \rangle \right|  <M,$ for each $j \in \Lambda_2.$ Taking $\widetilde{t} = \min\limits_{i \in \Lambda_1, j \in J_2} \left\{t_i, t'_j\right\},$ we have, for $t \in (-\tilde{t}, \tilde{t} ),\;$ $\left| \langle K^\dag f_i +tv_i, f_i \rangle \right| < M,$ for any $1 \leq i \leq N.$ Hence, for the dual $G_t = \{K^\dag f_i + tv_i\}_{i=1}^N ,$\; $r_{1}(F,G_t) <M = r_{1}(F,G),\;\forall t \in  (-\tilde{t}, \tilde{t} ).$
		
	\end{proof}

	Next, under suitable conditions, we derive a simplified expression for $r_{2}(F,G)$ in terms of $r_{1}(F,G)$, as stated in the following two theorems.

\begin{prop} \label{thm3point2}
		Let $F= \{f_i\}_{i=1}^N$ be a $K-$frame for $\mathcal{H}_n.$  Let $G= \{g_i\}_{i=1}^N$ be a dual of $F$ satisfying
		\begin{enumerate}
			\item [{\em (i)}] $\langle g_i , f_i \rangle \geq 0,  \; \textit{for all}\; 1\leq i \leq N$
			\item [{\em (ii)}] $ \langle g_i , f_j \rangle \langle g_j , f_i \rangle  = c \geq 0 , \; \text{for all}\;\; i \neq j.$
		\end{enumerate}
		Then,
		
		\begin{eqnarray*}
			r_{2} (F,G) = \begin{cases}\frac{1}{2} \left( r_{1} (F,G) + \max\limits_{i \in \Delta^c } \langle g_i , f_i \rangle + \sqrt{\left( r_{1} (F,G)  - \max\limits_{i \in \Delta^c } \langle g_i , f_i \rangle \right)^2 + 4c} \right),\;\; & \text{if}\; c>0 \;\text{and}\; |\Delta|=1,\\~\\
				r_{1} (F,G) + \sqrt{c}, \;\;& \text{if}\; c>0 \;\text{and}\; |\Delta|>1,\\~\\
				r_{1} (F,G),  \;\;& \text{if}\; c=0,
				
			\end{cases}
		\end{eqnarray*}
		where $\Delta =  \bigg\{ i: \langle g_i , f_i \rangle  = r_{1} (F,G)\bigg\}. $

	\end{prop}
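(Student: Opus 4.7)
The plan is to build directly on the explicit spectral description derived earlier in equation \eqref{eqn3point3}, which identifies the nonzero eigenvalues of $(\Theta_G^K)^*D\Theta_F^K$ for a two-erasure mask $D$ supported on $\{i,j\}$ as $\lambda_\pm^{(i,j)} = \tfrac{1}{2}\bigl[\alpha_{ii}+\alpha_{jj}\pm\sqrt{(\alpha_{ii}-\alpha_{jj})^2+4\alpha_{ij}\alpha_{ji}}\bigr]$, where $\alpha_{ij}=\langle g_i,f_j\rangle$. First, I would use hypothesis (ii), $\alpha_{ij}\alpha_{ji}=c\geq 0$, to observe that the discriminant is nonnegative, so the two roots are real. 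Combined with hypothesis (i), $\alpha_{ii}\geq 0$ for every $i$, this yields $\lambda_+^{(i,j)}\geq 0$ and $\lambda_+^{(i,j)}+\lambda_-^{(i,j)}=\alpha_{ii}+\alpha_{jj}\geq 0$, which forces $|\lambda_-^{(i,j)}|\leq \lambda_+^{(i,j)}$. Therefore $r_2(F,G)=\max_{i\neq j}\phi(\alpha_{ii},\alpha_{jj})$ where $\phi(a,b):=\tfrac{1}{2}\bigl(a+b+\sqrt{(a-b)^2+4c}\bigr)$.

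Next I would establish that $\phi$ is nondecreasing in each variable on $[0,\infty)^2$. A direct computation gives $\partial_a\phi=\tfrac{1}{2}\bigl(1+(a-b)/\sqrt{(a-b)^2+4c}\bigr)\geq 0$, and symmetrically in $b$. This monotonicity is the key reduction: maximizing $\phi(\alpha_{ii},\alpha_{jj})$ over pairs $i\neq j$ amounts to pushing both $\alpha_{ii}$ and $\alpha_{jj}$ as large as possible subject to the constraint $i\neq j$.

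With this in hand, the three branches fall out by case analysis. If $c=0$, then $\phi(a,b)=\max(a,b)$ on $[0,\infty)^2$, so $r_2(F,G)=\max_{i\neq j}\max(\alpha_{ii},\alpha_{jj})=\max_i\alpha_{ii}=r_1(F,G)$. If $c>0$ and $|\Delta|\geq 2$, pick distinct $i,j\in\Delta$ so that $\alpha_{ii}=\alpha_{jj}=r_1(F,G)$; then $\phi(r_1,r_1)=r_1+\sqrt{c}$, and monotonicity shows no other pair can exceed this. If $c>0$ and $|\Delta|=1$, say $\Delta=\{i_0\}$, set $M':=\max_{j\in\Delta^c}\langle g_j,f_j\rangle$; by monotonicity, pairs of the form $(i_0,j)$ are maximized at $\phi(r_1,M')$, while pairs entirely inside $\Delta^c$ are bounded by $\phi(M',M')\leq\phi(r_1,M')$. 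Substituting $M'$ into $\phi$ gives the first branch of the stated formula verbatim.

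The hard part is not computational but conceptual: one must resist the temptation to evaluate $\phi(r_1,r_1)$ in the case $|\Delta|=1$, since the diagonal pair $(i_0,i_0)$ is disallowed, and this is exactly what produces the asymmetric closed-form in the first branch. Verifying the monotonicity of $\phi$ carefully (including the boundary case $a<b$ with $c>0$, where the partial derivative is strictly positive rather than merely nonnegative) and confirming that $|\lambda_+^{(i,j)}|\geq|\lambda_-^{(i,j)}|$ are the only technical points; everything else is bookkeeping around the structural identity \eqref{eqn3point3}.
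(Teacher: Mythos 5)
Your proof is correct and follows essentially the same route as the paper's: both reduce $r_{2}(F,G)$ to maximizing the larger eigenvalue $\phi(\alpha_{ii},\alpha_{jj})=\tfrac{1}{2}\bigl(\alpha_{ii}+\alpha_{jj}+\sqrt{(\alpha_{ii}-\alpha_{jj})^2+4c}\bigr)$ over pairs $i\neq j$, exploit its monotonicity in each argument (the paper's increasing function $\theta_i$ is exactly your $\phi(\cdot,\alpha_{ii})$), and then split into the three cases. Your explicit check that $|\lambda_-^{(i,j)}|\leq\lambda_+^{(i,j)}$ via $\lambda_++\lambda_-=\alpha_{ii}+\alpha_{jj}\geq 0$ is a detail the paper leaves implicit inside the absolute value in $\theta_i$, but the argument is otherwise the same.
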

	
	\begin{proof}
		Suppose that $c > 0.$ We begin by considering the case when $\Delta = \{k\}.$ Let $\Delta_1 := \left\{i : \max\limits_{j \in \Delta^c} \, \langle g_j , f_j \rangle  = \langle g_i , f_i \rangle  \right\}$ and \, $ \ell \in \Delta_1.$ A straightforward computation shows that for any $i$ the function  $\theta_i(x) = \frac{1}{2} \left| x +\alpha_{ii} + \sqrt{(x -\alpha_{ii})^{2}+4c} \right |,$ where $\alpha_{ij} = \langle g_i,f_j \rangle,$ is an increasing function on $[0,\infty).$ This would result in $\max\limits_{\substack{1 \leq j \leq N \\ j \neq i}} \theta_i(\alpha_{jj}) = \theta_i( \alpha_{kk}),$ when $i \neq k$ and $\max\limits_{\substack{1 \leq j \leq N \\ j \neq k}} \theta_k( \alpha_{jj})= \theta_k( \alpha_{\ell \ell}). $ Therefore,
		\begin{align*}
			r_{2}(F,G) = \max\limits_{1 \leq i \leq N} \max\limits_{\substack{1 \leq j \leq N \\ j \neq i}} \theta_i( \alpha_{jj})
			= \max \bigg\{ \theta_k( \alpha_{\ell \ell}), \theta_i( \alpha_{kk}) : 1 \leq i \leq N, i \neq k \bigg\}.
		\end{align*}
		As $\theta_i( \alpha_{jj}) = \theta_j( \alpha_{ii}),$ for $i \neq j,$ we have
		\begin{align}\label{equation4point15}
			r_{2} (F,G) &= \max\limits_{i \neq k } \theta_k( \alpha_{ii}) = \theta_k( \alpha_{\ell \ell}) = \frac{1}{2}\left|   \alpha_{\ell \ell}  +  \alpha_{kk} +  \sqrt{( \alpha_{\ell \ell} - \alpha_{kk})^{2}+4c}   \right| \\&= \frac{1}{2} \left( r_{1} (F,G) + \max\limits_{i \in \Delta^c } \langle g_i , f_i \rangle + \sqrt{\left( r_{1} (F,G)  - \max\limits_{i \in \Delta^c } \langle g_i , f_i \rangle \right)^2 + 4c} \right). \nonumber
		\end{align}
		
		On the other hand, suppose $|\Delta| \geq 2.$ Let $k_1,k_2 \in \Delta.$ Then, proceeding along the lines of the earlier argument, we get $k_1$ and $k_2$ in the place of $k$ and $\ell$ in \eqref{equation4point15}. Hence, $ r_{2} (F,G) = r_{1} (F,G) + \sqrt{c}.$\\
		Let us now assume that \( c = 0 \). Utilizing part (i) and the representation of \( r_{2}(F, G) \) as provided in \eqref{eqn3point3}, it follows that

\[
r_{2}(F, G) = \frac{1}{2} \max\limits_{i \neq j} \left\{ \alpha_{ii} + \alpha_{jj} + \sqrt{(\alpha_{ii} - \alpha_{jj})^2} \right\} = \max\limits_{1 \leq i \leq N} \alpha_{ii} = r_{1}(F, G),
\]
thereby completing the proof of the theorem.

	\end{proof}

	\begin{thm} \label{thm3point2*}
		Let $F= \{f_i\}_{i=1}^N$ be a $K-$frame for $\mathcal{H}_n$. Let $G= \{g_i\}_{i=1}^N$ be a $K-$dual of $F$ satisfying
		\begin{enumerate}
			\item [{\em (i)}] $\langle g_i , f_i \rangle \geq 0,  \; \textit{for all}\; 1\leq i \leq N.$
			\item [{\em (ii)}] $ \langle g_i , f_j \rangle \langle g_j , f_i \rangle  = c < 0 , \; \text{for all}\;\; i \neq j.$
		\end{enumerate}
		If  $\Delta =  \bigg\{ i: \langle g_i , f_i \rangle  = r_{1} (F,G)\bigg\} $ contains more than one element, then \; $r_{2} (F,G) = \sqrt{\left(r_{1} (F,G)\right)^2-c}.$
	\end{thm}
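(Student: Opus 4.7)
The plan is to work directly from the eigenvalue formula \eqref{eqn3point3}, which expresses $r_2(F,G)$ as the maximum, over $i\neq j$, of the moduli of the two non-zero eigenvalues $\lambda_\pm^{(ij)} = \frac{\alpha_{ii}+\alpha_{jj}\pm\sqrt{(\alpha_{ii}-\alpha_{jj})^2+4\alpha_{ij}\alpha_{ji}}}{2}$ of the associated error matrix, where $\alpha_{ii}=\langle g_i,f_i\rangle\geq 0$ by (i) and $\alpha_{ij}\alpha_{ji}=c<0$ by (ii). Writing $R:=r_1(F,G)$, so that $0\leq \alpha_{ii}\leq R$ for every $i$, I would split the analysis according to the sign of the discriminant $\Delta_{ij}:=(\alpha_{ii}-\alpha_{jj})^2+4c$ and establish in each case the bound $|\lambda_\pm^{(ij)}|\leq\sqrt{R^2-c}$.

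In the first subcase $\Delta_{ij}\geq 0$, the roots $\lambda_\pm^{(ij)}$ are real with sum $\alpha_{ii}+\alpha_{jj}\geq 0$ and product $\alpha_{ii}\alpha_{jj}-c$ that is strictly positive (because $\alpha_{ii}\alpha_{jj}\geq 0$ and $-c>0$). Hence both roots share the same sign, and, having non-negative sum, both are in fact non-negative. Consequently $\max(|\lambda_+^{(ij)}|,|\lambda_-^{(ij)}|)=\lambda_+^{(ij)}\leq \frac{\alpha_{ii}+\alpha_{jj}+|\alpha_{ii}-\alpha_{jj}|}{2}=\max(\alpha_{ii},\alpha_{jj})\leq R\leq \sqrt{R^2-c}$, the last inequality holding since $-c>0$.

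In the second subcase $\Delta_{ij}<0$, the eigenvalues form a complex conjugate pair and a direct computation gives $|\lambda_\pm^{(ij)}|^2 = \frac{(\alpha_{ii}+\alpha_{jj})^2-\bigl((\alpha_{ii}-\alpha_{jj})^2+4c\bigr)}{4}=\alpha_{ii}\alpha_{jj}-c\leq R^2-c$. Combining the two subcases yields $r_2(F,G)\leq \sqrt{R^2-c}$. For the matching lower bound I would invoke the hypothesis $|\Delta|\geq 2$: choosing two distinct indices $k_1,k_2\in\Delta$ forces $\alpha_{k_1k_1}=\alpha_{k_2k_2}=R$, so $\Delta_{k_1k_2}=4c<0$ places this pair into the second subcase and gives modulus exactly $\sqrt{R^2-c}$. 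Therefore $r_2(F,G)=\sqrt{R^2-c}=\sqrt{r_1(F,G)^2-c}$.

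The main obstacle, though a mild one, is the first subcase: it is essential to observe that the strict negativity of $c$ forces the product of the two real roots to be strictly positive, ruling out the scenario in which the algebraically smaller root has larger absolute value than the larger one. Without this observation, the estimate $\lambda_+^{(ij)}\leq R$ would not be enough, since a priori $|\lambda_-^{(ij)}|$ might have dominated the spectral radius. Every other step is routine algebra once the eigenvalue formula is in hand.
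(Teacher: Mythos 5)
Your proof is correct. It starts from the same eigenvalue formula \eqref{eqn3point3} and, like the paper, obtains the exact value $\sqrt{R^2-c}$ from a pair $k_1,k_2\in\Delta$ (where the discriminant is $4c<0$ and the conjugate roots have modulus squared $R^2-c$). Where you differ is in the upper bound over all remaining pairs. The paper splits into three situations according to whether the indices lie in $\Delta$ (the pair $k_1,\ell_1$ with $\ell_1\notin\Delta$, then the pair $\ell_1,\ell_2$ with both outside $\Delta$), and in the last situation runs a proof by contradiction that requires two rounds of squaring an inequality and a sign analysis of the resulting factors. You instead give a single uniform estimate valid for every pair: when the discriminant $(\alpha_{ii}-\alpha_{jj})^2+4c$ is nonnegative, the two real roots have positive product $\alpha_{ii}\alpha_{jj}-c$ and nonnegative sum, hence are both positive, so the larger one bounds the pair's spectral radius and satisfies $\lambda_+\leq\max(\alpha_{ii},\alpha_{jj})\leq R<\sqrt{R^2-c}$; when the discriminant is negative, the identity $|\lambda_\pm|^2=\alpha_{ii}\alpha_{jj}-c\leq R^2-c$ finishes it. This buys a substantially shorter and more transparent argument, and it makes explicit that the hypothesis $|\Delta|>1$ is needed only for attainment of the bound, not for the inequality $r_2(F,G)\leq\sqrt{R^2-c}$ itself. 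Your flagged ``main obstacle'' --- that strict negativity of $c$ forces the real roots to have the same sign, so $|\lambda_-|$ cannot exceed $\lambda_+$ --- is exactly the right point to isolate; it is the observation that lets you collapse the paper's case analysis.
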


	\begin{proof}
		By using condition (ii) in \eqref{eqn3point3}, we obtain
		\begin{align*}
			r_{2} (F,G) &= \frac{1}{2} \max_{i\neq j} \left|  \alpha_{ii} + \alpha_{jj} \pm \sqrt{( \alpha_{ii} -  \alpha_{jj})^2 + 4c}\right| .
		\end{align*}
		One can verify that  $r_{2} (F,G)  =  \frac{1}{2} \max\limits_{i\neq j} \left|  \alpha_{ii} + \alpha_{jj} + \sqrt{( \alpha_{ii} -  \alpha_{jj})^2 + 4c}\right|, $ whether $\sqrt{( \alpha_{ii} -  \alpha_{jj})^2 + 4c }$ is real or complex. Define $\gamma_{ij} := \frac{1}{2} \left|  \alpha_{ii} + \alpha_{jj} + \sqrt{( \alpha_{ii} -  \alpha_{jj})^2 + 4c}\right|.$ It is an easy computation that
		\begin{align*}
			\gamma_{ij} = \begin{cases}
				\frac{1}{2} \left( \alpha_{ii} + \alpha_{jj} + \sqrt{( \alpha_{ii} -  \alpha_{jj})^2 + 4c}\right),\;\; & \text{if}\; ( \alpha_{ii} -  \alpha_{jj})^2 + 4c \geq 0,\\~\\
				\sqrt{ \alpha_{ii}\alpha_{jj}-c}, \;\;& \text{if}\; ( \alpha_{ii} -  \alpha_{jj})^2 + 4c < 0.
			\end{cases}
		\end{align*}
		Let $k_1,k_2 \in \Delta.$ Then clearly, $\gamma_{k_1 k_2} = \sqrt{\left(r_{1} (F,G)\right)^2-c}.$ Let $\ell_1 \notin \Delta.$ When $\left(  r_{1} (F,G) -  \alpha_{\ell_1 \ell_1}\right)^2 +4c \geq 0,$ we obtain
		\begin{align*}
			\gamma_{k_1 \ell_1 } &= \frac{1}{2}  \left( r_{1} (F,G)  +  \alpha_{\ell_1 \ell_1} + \sqrt{\left(  r_{1} (F,G) -  \alpha_{\ell_1 \ell_1}\right)^2 +4c}\; \right) \\&< \frac{1}{2}  \left( r_{1} (F,G)   + \alpha_{\ell_1 \ell_1} + \sqrt{\left(  r_{1} ( F,G) -   \alpha_{\ell_1 \ell_1}\right)^2 }\right) \\&=  r_{1} (F,G),
		\end{align*}
		as $ \alpha_{\ell_1 \ell_1} < r_{1} (F,G).$ Moreover, $r_{1} (F,G) < \sqrt{\left(r_{1} (F,G)\right)^2-c}. $ So, $\gamma_{k_1 \ell_1 } < \sqrt{\left(r_{1} (F,G)\right)^2-c}.$ On the other hand, when $\left(  r_{1} (F,G) -   \alpha_{\ell_1 \ell_1}\right)^2 +4c < 0,$ then clearly
		$$\gamma_{k_1 \ell_1 } =  \sqrt{r_{1} (F,G)  \alpha_{\ell_1 \ell_1} -c} < \sqrt{\left(r_{1} (F,G)\right)^2-c}.$$
		Now, let us take $\ell_2 \notin \Delta.$ All that remains to prove is that $\gamma_{ \ell_1 \ell_2 } \leq \sqrt{\left(r_{1} (F,G)\right)^2-c}. $  First, we consider the case when $\left(\alpha_{\ell_1 \ell_1} -  \alpha_{\ell_2 \ell_2} \right)^2 + 4c \geq 0.$  Suppose, on the contrary, we have $\sqrt{\left(r_{1} (F,G)\right)^2-c} < \gamma_{ \ell_1 \ell_2 }= \frac{1}{2} \left( \alpha_{\ell_1 \ell_1}   +  \alpha_{\ell_2 \ell_2} + \sqrt{\left( \alpha_{\ell_1 \ell_1} -   \alpha_{\ell_2 \ell_2}\right)^2 +4c}\right) .$ Without loss of generality, we may assume that $ \alpha_{\ell_2 \ell_2} \leq  \alpha_{\ell_1 \ell_1}.$ Moreover, as  $ \alpha_{\ell_1 \ell_1} <r_{1} (F,G),$ we obtain $0 \leq \left( \alpha_{\ell_1 \ell_1} - \alpha_{\ell_2 \ell_2}\right)^2 + 4c < \left(r_{1} (F,G) - \alpha_{\ell_2 \ell_2} \right)^2 + 4c. $ Therefore, we get  $\sqrt{\left(r_{1} (F,G)\right)^2-c} < \frac{1}{2} \left( r_{1} (F,G) + \alpha_{\ell_2 \ell_2} + \sqrt{\left( r_{1} (F,G)  -  \alpha_{\ell_2 \ell_2} \right)^2 + 4c} \right).$ Then,
		\begin{align}\label{eqn4point16}
			\sqrt{4\left(r_{1} (F,G)\right)^2-4c} - \sqrt{\left( r_{1} (F,G) -  \alpha_{\ell_2 \ell_2}\right)^2 +4c} < r_{1} (F,G) +   \alpha_{\ell_2 \ell_2} .
		\end{align}
		Now, $\left( r_{1} (F,G) -  \alpha_{\ell_2 \ell_2}\right)^2  < 4\left(r_{1} (F,G)\right)^2, $  which implies that $\sqrt{4\left(r_{1} (F,G)\right)^2-4c}\; - \\ \sqrt{\left( r_{1} (F,G) -   \alpha_{\ell_2 \ell_2}\right)^2 +4c} > 0.$ Therefore, squaring \eqref{eqn4point16} and simplifying, we obtain
		$$ \left(r_{1} (F,G)\right)^2 - r_{1} (F,G)   \alpha_{\ell_2 \ell_2} < \sqrt{\left(r_{1} (F,G)\right)^2-c} \; \sqrt{\left( r_{1} (F,G) -   \alpha_{\ell_2 \ell_2}\right)^2 +4c}. $$
		The left hand side is positive and hence on squaring again, we get
		\begin{align*}
			\left(r_{1} (F,G)\right)^2 &\left( r_{1} (F,G) -  \alpha_{\ell_2 \ell_2}    \right) ^2  \\&	<\left(r_{1} (F,G)\right)^2 \left( r_{1} (F,G) -  \alpha_{\ell_2 \ell_2}    \right) ^2  + 4c \left(r_{1} (F,G)\right)^2 -c\left(  r_{1} (F,G) -  \alpha_{\ell_2 \ell_2}     \right)^2 -4c^2,
		\end{align*}
		which may be simplified as
		$$ 0< c\left(\left(2{r_{1}} (F,G)\right)^2 - \left( r_{1} (F,G) -  \alpha_{\ell_2 \ell_2}    \right) ^2 -4c  \right)  = c\bigg(   \left(3r_{1} (F,G) -\alpha_{\ell_2 \ell_2}    \right)    \left(r_{1} (F,G) + \alpha_{\ell_2 \ell_2}  \right) -4c \bigg).$$
		We observe that each of the terms \( \left(3r_{1}(F,G) - \alpha_{\ell_2 \ell_2} \right) \), \( \left(r_{1}(F,G) + \alpha_{\ell_2 \ell_2} \right) \), and the term \( -4c \) on the right-hand side is strictly positive. Consequently, the above inequality is not valid. Therefore,  $\gamma_{ \ell_1 \ell_2 } \leq \sqrt{\left(r_{1} (F,G)\right)^2-c}.$\\
		Finally, if  $\left(  \alpha_{\ell_1 \ell_1} - \alpha_{\ell_2 \ell_2} \right)^2 + 4c < 0,$ then $ \gamma_{\ell_1 \ell_2} =\sqrt{ \alpha_{\ell_1 \ell_1} \alpha_{\ell_2 \ell_2} -c}< \sqrt{\left(r_{1} (F,G)\right)^2-c}.$ Thus, we conclude that $	r_{2} (F,G) =\max\limits_{i \neq j} \gamma_{ij} =  \sqrt{\left(r_{1} (F,G)\right)^2-c}.$

	\end{proof}

	We now present a sufficient condition under which a given $K-$dual frame achieves spectral optimality with respect to both one and two erasures.

\begin{thm}
Let \( F \) be a \( K-\)frame for a real Hilbert space \( \mathcal{H}_n \). If \( G \) is a \( 2-\)uniform \( K-\)dual of \( F \), then \( G \) is spectrally optimal for two erasures.
\end{thm}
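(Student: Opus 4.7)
The plan is to verify the two conditions that define membership in $\mathcal{R}_2(F)$: first that $G \in \mathcal{R}_1(F)$, i.e., $G$ is already $1$-erasure spectrally optimal among all $K$-duals of $F$; and second that $r_2(F,G) \leq r_2(F,G')$ for every $G' \in \mathcal{R}_1(F)$. The strategy is to invoke the pair-based results of Section~3 and transfer them to the fixed-frame setting by observing that restricting the first component of a $K$-dual pair to $F$ only shrinks the competitor set.

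For the first step, since $G$ is $2$-uniform it is in particular $1$-uniform, so by Remark~\ref{rem3point2} we have $\langle f_i, g_i \rangle = \operatorname{tr}(K)/N$ for every $i$, whence $r_1(F,G) = \operatorname{tr}(K)/N$ by \eqref{equation3point1}. Theorem~\ref{thm5point2} supplies the matching lower bound $r_1(F,G') \geq \operatorname{tr}(K)/N$ for every $K$-dual $G'$ of $F$, so $\tilde{r}_1(F) = \operatorname{tr}(K)/N = r_1(F,G)$ and hence $G \in \mathcal{R}_1(F)$.

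For the second step, I would take an arbitrary $G' \in \mathcal{R}_1(F)$. Because $r_1(F,G') = \operatorname{tr}(K)/N$, Theorem~\ref{thm5point2} forces $(F,G')$ to be a $1$-uniform $K$-dual pair. The derivation of the lower bound in Theorem~\ref{theorem3point5twoerasure} depends only on $1$-uniformity together with the trace identity $\sum_{i \neq j} \alpha_{ij}\alpha_{ji} = \operatorname{tr}(K^2) - (\operatorname{tr}(K))^2/N$, both of which hold for $(F,G')$. Applying that bound therefore gives $r_2(F,G') \geq$ the right-hand side of \eqref{equation7expression}, regardless of the sign of $\mu := \operatorname{tr}(K^2) - (\operatorname{tr}(K))^2/N$. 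Since $G$ is $2$-uniform, the second half of Theorem~\ref{theorem3point5twoerasure} shows that $r_2(F,G)$ equals exactly this expression. Combining the two bounds yields $r_2(F,G) \leq r_2(F,G')$ for every $G' \in \mathcal{R}_1(F)$, and therefore $r_2(F,G) = \tilde{r}_2(F)$ and $G \in \mathcal{R}_2(F)$.

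The only subtle point is keeping track of the distinction between the pair-based optimality of Section~3 and the fixed-frame optimality of Section~4; since the lower bound in Theorem~\ref{theorem3point5twoerasure} is expressed purely in terms of $\operatorname{tr}(K)$, $\operatorname{tr}(K^2)$, and $N$, it is insensitive to whether the first frame in the pair is held fixed, and no further computation should be required. I do not anticipate any technical obstacle beyond correctly citing these ingredients.
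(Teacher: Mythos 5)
Your proposal is correct and follows essentially the same route as the paper: establish $G \in \mathcal{R}_1(F)$ via Remark~\ref{rem3point2} and Theorem~\ref{thm5point2}, note that every $G' \in \mathcal{R}_1(F)$ yields a $1$-uniform pair $(F,G')$ so that the averaging lower bound from the proof of Theorem~\ref{theorem3point5twoerasure} applies to it, and observe that the $2$-uniform dual $G$ attains that bound, which depends only on $\operatorname{tr}(K)$, $\operatorname{tr}(K^2)$ and $N$. The one point you gloss over is that the present theorem does not assume $K$ is positive semi-definite, and the paper's proof runs a separate argument for $\operatorname{tr}(K)<0$ (where $\tilde r_1(F)=-\operatorname{tr}(K)/N$); your citations of Theorem~\ref{thm5point2} and Theorem~\ref{theorem3point5twoerasure}, both stated for positive semi-definite $K$, silently restrict you to $\operatorname{tr}(K)\ge 0$, though the adjustment is routine. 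The paper also recomputes $r_2(F,G)$ through Proposition~\ref{thm3point2} and Theorem~\ref{thm3point2*} rather than quoting the equality half of Theorem~\ref{theorem3point5twoerasure}, but that is a presentational rather than a mathematical difference.
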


	\begin{proof}
			First consider the case when $tr(K)\geq 0.$ For a $2-$uniform $K-$dual frame $G$ of $F,$ we have, $ \langle f_i,g_i \rangle = \dfrac{tr(K)}{N},\; \forall i$  and so $r_{1}(F,G) =\dfrac{tr(K)}{N}.$  As $r_{1}(F,G') \geq \dfrac{tr(K)}{N},$ for any $K-$dual $G'$ of $F,$ as established in Theorem~\ref{thm5point2}, we have that $\tilde{r}_{1} (F) = \dfrac{tr(K)}{N}$ and further, $G$ is a $1-$erasure spectrally optimal $K-$dual of $F.$ The fact that  $ \langle f_i,g_i \rangle = \dfrac{tr(K)}{N},\; \forall\, i$ also suggests that $\langle f_i,g_i \rangle \geq 0\; \forall\;i$ and $|\Delta| = N,$ where $\Delta$ is as in Theorem \ref{thm3point2}. Therefore, by Theorems \ref{thm3point2} and \ref{thm3point2*},
		$$ r_{2}(F,G) = \begin{cases}
			\dfrac{tr(K)}{N} + \sqrt{c},\;\;  & \text{if $c \geq 0,$}\\~\\
			\sqrt{\dfrac{tr(K)}{N} -c},\;\;\;\; & \text{if $c < 0$},
		\end{cases}$$
		where $c=   \langle g_i,f_j \rangle \langle g_j,f_i \rangle,\;i \neq j.$ We may write
		$$cN(N-1) = \sum\limits_{i \neq j}  \langle g_i,f_j \rangle \langle g_j,f_i \rangle = tr(K) - \dfrac{\left(tr(K)\right)^2}{N}, $$
		using \eqref{eqation7}, from which we obtain
		\begin{align}\label{equation4point16}
			c = \dfrac{tr(K) - \dfrac{\left(tr(K)\right)^2}{N}}{N(N-1)}.
		\end{align}
Explicitly, $$r_{2} (F,G) = \begin{cases}
	\dfrac{tr(K)}{N} + \sqrt{\dfrac{tr(K) - \dfrac{\left(tr(K)\right)^2}{N}}{N(N-1)}},\;\;  & \text{if $tr(K) \geq \dfrac{\left(tr(K)\right)^2}{N},$}\\~\\
	\sqrt{\left(\dfrac{tr(K)}{N}\right)^2 -\dfrac{tr(K) - \dfrac{\left(tr(K)\right)^2}{N}}{N(N-1)}},\;\;\;\; & \text{if $tr(K) < \dfrac{\left(tr(K)\right)^2}{N}$}.
\end{cases}$$	
	
		Next, we shall show that $r_{2} (F,G') \geq \begin{cases}
			\dfrac{tr(K)}{N} + \sqrt{c},\;\;  & \text{if $c \geq 0,$}\\~\\
			\sqrt{\left(\dfrac{tr(K)}{N}\right)^2 -c},\;\;\;\; & \text{if $c<0$},
		\end{cases} $
		\;\; for any $G' \in \mathcal{R}_{1}(F),$ so that  $\tilde{r}_{2} (F) =  r_{2} (F,G).$ For $G' \in \mathcal{R}_{1}(F),\; r_{1} (F,G') = \tilde{r}_{1} (F) = \dfrac{tr(K)}{N}.$ In other words, $\max\limits_{1 \leq i \leq N}  \left| \langle f_i, g'_i \rangle  \right| = \dfrac{tr(K)}{N},$ which implies $  \langle f_i, g'_i \rangle \leq \dfrac{tr(K)}{N},\;\forall\,i,$ as $\mathcal{H}_n$ is a real Hilbert space. We note that $ tr(K) = \sum\limits_{1 \leq i \leq N} \langle f_i, g'_i \rangle \leq \sum\limits_{1 \leq i \leq N} \dfrac{tr(K)}{N} = tr(K),$ which forces  $ \langle f_i, g'_i \rangle = \dfrac{tr(K)}{N}, \, \forall\,i.$ Equivalently, $G'$ is a $1-$uniform $K-$dual of $F.$ Therefore,
		$$r_{2} (F,G') = \max_{i \neq j} \left| \dfrac{tr(K)}{N}+ \sqrt{ \langle g'_i, f_j \rangle \langle g'_j, f_i \rangle}     \right|,  $$
		analogous to \eqref{eqn3point4*}.
		
		Let us now analyze the scenario in which \( c \geq 0 \). There exist $i_0 \neq j_0$ such that $ \langle g'_{i_0}, f_{j_0} \rangle \langle g'_{j_0}, f_{i_0} \rangle \geq \dfrac{\sum\limits_{i \neq j} \langle g'_i, f_j \rangle  \langle g'_j, f_i \rangle}{N(N-1)} = \dfrac{tr(K) - \frac{\left(tr(K)\right)^2}{N}}{N(N-1)},$ as demonstrated in the proof of Theorem~\ref{theorem3point5twoerasure}. By employing the \(1-\)uniformity property of $G',$ \eqref{eqation7} and   \eqref{equation4point16}, we get  $ \langle g'_{i_0}, f_{j_0} \rangle \langle g'_{j_0}, f_{i_0} \rangle \geq c,$ which in turn leads to
		$$r_{2} (F,G')  \geq \left|\frac{tr(K)}{N} + \sqrt{ \langle g'_{i_0}, f_{j_0} \rangle \langle g'_{j_0}, f_{i_0} \rangle}   \right| \geq \frac{tr(K)}{N} + \sqrt{c}. $$
		On the other hand, when $c<0$ then $tr(K) < \dfrac{\left(tr(K)\right)^2}{N}.$ Therefore, $ \sum\limits_{i \neq j}  \langle g'_i,f_j \rangle \langle g'_j,f_i \rangle = tr(K) - \dfrac{\left(tr(K)\right)^2}{N}<0$ gives $\min\limits_{i \neq j} \langle g'_i,f_j \rangle \langle g'_j,f_i \rangle \leq \dfrac{tr(K) - \dfrac{\left(tr(K)\right)^2}{N}}{N(N-1)}.$ There exist $i_1 \neq j_1$ such that $ \langle g'_{i_1}, f_{j_1} \rangle \langle g'_{j_1}, f_{i_1} \rangle \leq \dfrac{\sum\limits_{i \neq j} \langle g'_i, f_j \rangle  \langle g'_j, f_i \rangle}{N(N-1)} = \dfrac{tr(K) - \dfrac{\left(tr(K)\right)^2}{N}}{N(N-1)} = c <0.$ This leads to  
		$$r_{2} (F,G')  \geq \left|\frac{tr(K)}{N} + \sqrt{ \langle g'_{i_1}, f_{j_1} \rangle \langle g'_{j_1}, f_{i_1} \rangle}   \right| = \sqrt{\left( \frac{tr(K)}{N}\right)^2 - \langle g'_{i_1}, f_{j_1} \rangle \langle g'_{j_1}, f_{i_1} \rangle} \geq  \sqrt{\left( \frac{tr(K)}{N}\right)^2 - c}. $$

	Hence, \( G \) is a $2-$erasure spectrally optimal $K-$dual of \( F \). Employing the expression for \( c \) from \eqref{equation4point16}, we may express

		$$ \tilde{r}_{2} (F) = r_{2} (F,G) =  \begin{cases}
			\dfrac{tr(K)}{N} + \sqrt{\dfrac{tr(K) - \dfrac{\left(tr(K)\right)^2}{N}}{N(N-1)}},\;\;  & \text{if $tr(K) \geq \dfrac{\left(tr(K)\right)^2}{N},$}\\~\\
			\sqrt{\left(\dfrac{tr(K)}{N}\right)^2 -\dfrac{tr(K) - \dfrac{\left(tr(K)\right)^2}{N}}{N(N-1)}},\;\;\;\; & \text{if $tr(K) < \dfrac{\left(tr(K)\right)^2}{N}$}.
		\end{cases} .$$
	
	\par Now consider the case when $tr(K)< 0.$ For a $2-$uniform $K-$dual frame $G$ of $F,$ we have, $ \langle f_i,g_i \rangle = \dfrac{tr(K)}{N},\; \forall i$  and so $r_{1}(F,G) =\max\limits_{1\leq i \leq N} \left|\langle f_i, g''_i \rangle \right| = -\dfrac{tr(K)}{N}.$ For any $K-$dual $G''$ of $F,\,$  $r_{1}(F,G'') \geq -\dfrac{tr(K)}{N}.$ If not, then $\left|\langle f_i, g''_i \rangle \right| < -\dfrac{tr(K)}{N},\;1\leq i \leq N.$ Therefore, $-tr(K) = \left|\sum\limits_{1\leq i \leq N} \langle f_i, g''_i \rangle \right| \leq \sum\limits_{1\leq i \leq N} \left|\langle f_i, g''_i \rangle \right| < -tr(K),$ which is not possible. Thus, $G$ is a $1-$erasure spectrally optimal $K-$dual of $F.$ As $G$ is a $2-$uniform $K-$dual of $F,$ we then have $\langle f_i, g_j \rangle \langle f_j, g_i \rangle =\dfrac{tr(K) - \frac{\left(tr(K)\right)^2}{N}}{N(N-1)}< 0,$ for all $i \neq j.$ By \eqref{eqn3point4*}, 
	$$r_{2}(F,G) = \sqrt{\left(\dfrac{tr(K)}{N} \right)^2 - \dfrac{tr(K) - \frac{\left(tr(K)\right)^2}{N}}{N(N-1)}}.$$
	Now we shall show that for any $K-$dual $G''' \in \mathcal{R}_1 (F),\;$   $r_{2}(F,G''') \geq \sqrt{\left(\dfrac{tr(K)}{N} \right)^2 - \dfrac{tr(K) - \frac{\left(tr(K)\right)^2}{N}}{N(N-1)}}.$ As $G'''$ is a $1-$uniform $K-$dual of $F$
we have $\sum\limits_{i\neq j} \langle f_i, g'''_j \rangle \langle f_j, g'''_i \rangle = tr(K) - \dfrac{\left(tr(K)\right)^2}{N}. $	Thus, $\min\limits_{i \neq j} \langle f_i, g'''_j \rangle \langle f_j, g'''_i \rangle \leq \dfrac{tr(K) - \frac{\left(tr(K)\right)^2}{N}}{N(N-1)}< 0.$ Therefore, there exists $i_2 \neq j_2$ such that $\langle f_{i_2}, g'''_{j_2} \rangle \langle f_{j_2}, g'''_{i_2} \rangle \leq \dfrac{tr(K) - \frac{\left(tr(K)\right)^2}{N}}{N(N-1)}< 0.$ This in turn leads to  
$$r_{2} (F,G''')  \geq \left|\frac{tr(K)}{N} + \sqrt{ \langle g'''_{i_2}, f_{j_2} \rangle \langle g'''_{j_2}, f_{i_2} \rangle}   \right| = \sqrt{\left( \frac{tr(K)}{N}\right)^2 - \langle g'''_{i_2}, f_{j_2} \rangle \langle g'''_{j_2}, f_{i_2} \rangle} \;\geq \sqrt{\left(\dfrac{tr(K)}{N} \right)^2 - \dfrac{tr(K) - \frac{\left(tr(K)\right)^2}{N}}{N(N-1)}} . $$
		Hence, $\tilde{r}_{2}(F) = r_{2}(F,G) = \sqrt{\left(\dfrac{tr(K)}{N} \right)^2 - \dfrac{tr(K) - \frac{\left(tr(K)\right)^2}{N}}{N(N-1)}}$
	\end{proof}
	
	\noindent The following remark illustrates the effect of a unitary operator on spectral optimality. The proof approach for the above remark is similar to the one used in Theorem~\ref{thm4point3}.

	\begin{rem}\label{prop3point15}
		Let $F$ be a $K-$frame for $\mathcal{H}_n$  and  $U$ be a unitary operator on $\mathcal{H}_n$ such that $UK = KU.$ Then, $G \in \mathcal{R}_{i}(F)$ if and only if $UG \in \mathcal{R}_{i}(UF),\, i=1,2.$
	\end{rem}

	\noindent
\begin{example}
Consider the $K$-frame $F = \{f_1, f_2, f_3, f_4\}$ for $\mathbb{R}^3$, where
\[
f_1 = \begin{bmatrix} \sqrt{2} \\ 0 \\ 0 \end{bmatrix}, \quad
f_2 = \begin{bmatrix} \sqrt{2} \\ 0 \\ 0 \end{bmatrix}, \quad
f_3 = \begin{bmatrix} 0 \\ \frac{1}{\sqrt{2}} \\ \frac{1}{\sqrt{2}} \end{bmatrix}, \quad
f_4 = \begin{bmatrix} 0 \\ \frac{1}{\sqrt{2}} \\ -\frac{1}{\sqrt{2}} \end{bmatrix},
\]
and let
\[
K = \begin{bmatrix}
2 & 0 & 0 \\
0 & 1 & 0 \\
0 & 0 & 1
\end{bmatrix}.
\]
It is easy to verify that $F$ is a Parseval $K-$frame for $\mathbb{R}^3$. The pseudo inverse of $K$ is
\[
K^\dag = \begin{bmatrix}
\frac{1}{2} & 0 & 0 \\
0 & 1 & 0 \\
0 & 0 & 1
\end{bmatrix}.
\]

Hence, the canonical $K$-dual of $F$ is given by:
\[
K^\dag F = \left\{
\begin{bmatrix} \frac{1}{\sqrt{2}} \\ 0 \\ 0 \end{bmatrix},
\begin{bmatrix} \frac{1}{\sqrt{2}} \\ 0 \\ 0 \end{bmatrix},
\begin{bmatrix} 0 \\ \frac{1}{\sqrt{2}} \\ \frac{1}{\sqrt{2}} \end{bmatrix},
\begin{bmatrix} 0 \\ \frac{1}{\sqrt{2}} \\ -\frac{1}{\sqrt{2}} \end{bmatrix}
\right\}.
\]

We compute $\|f_i\| \cdot \|K^\dag f_i\| = 1$ for all $1 \leq i \leq 4$. Thus,
\[
\mathbb{O}_{1}(F, K^\dag F) = 1 = \frac{\operatorname{tr}(K)}{N},
\]
and by Proposition~\ref{thm4point1}, the pair $(F, K^\dag F)$ is a $1$-erasure optimal $(4,3)\;K-$dual pair with respect to the operator norm. Moreover, any $K-$dual $G = \{g_i\}_{i=1}^4$ of $F$ must be of the form:
\[
g_1 = \begin{bmatrix} \frac{1}{\sqrt{2}} + \alpha_1 \\ \beta_1 \\ \gamma_1 \end{bmatrix}, \quad
g_2 = \begin{bmatrix} \frac{1}{\sqrt{2}} - \alpha_1 \\ -\beta_1 \\ \gamma_1 \end{bmatrix}, \quad
g_3 = \begin{bmatrix} 0 \\ \frac{1}{\sqrt{2}} \\ \frac{1}{\sqrt{2}} \end{bmatrix}, \quad
g_4 = \begin{bmatrix} 0 \\ \frac{1}{\sqrt{2}} \\ -\frac{1}{\sqrt{2}} \end{bmatrix}, \quad
\alpha_1, \beta_1, \gamma_1 \in \mathbb{R}.
\]

If $G$ is a $1-$erasure optimal $K-$dual of $F$ under operator norm, then $\|f_i\| \cdot \|g_i\| = 1$ for all $i$. Now,
\begin{align*}
\|f_1\| \cdot \|g_1\| &= \sqrt{2} \cdot \sqrt{\left( \frac{1}{\sqrt{2}} + \alpha_1 \right)^2 + \beta_1^2 + \gamma_1^2}, \\
\|f_2\| \cdot \|g_2\| &= \sqrt{2} \cdot \sqrt{\left( \frac{1}{\sqrt{2}} - \alpha_1 \right)^2 + \beta_1^2 + \gamma_1^2}, \\
\|f_3\| \cdot \|g_3\| &= 1, \\
\|f_4\| \cdot \|g_4\| &= 1.
\end{align*}

Equating the norms implies $\alpha_1 = \beta_1 = \gamma_1 = 0$. Therefore, the canonical dual is the unique $1-$erasure optimal $K-$dual of $F$ under operator norm.

Additionally, we observe that $\langle f_i, K^\dag f_i \rangle = 1$ for all $i$, so that $r_1(F, K^\dag F) = 1$. By Theorem~\ref{thm5point2}, $(F, K^\dag F)$ is also a $1-$erasure spectrally optimal $K$-dual pair.

Now, suppose $G$ is another $1-$erasure spectrally optimal $K-$dual of $F$. Then,
\begin{align*}
\langle f_1, g_1 \rangle &= 1 + \sqrt{2}\alpha_1, \\
\langle f_2, g_2 \rangle &= 1 - \sqrt{2}\alpha_1, \\
\langle f_3, g_3 \rangle &= 1, \\
\langle f_4, g_4 \rangle &= 1.
\end{align*}
Thus, for $\langle f_i, g_i \rangle = 1$ to hold for all $i$, we must have $\alpha_1 = 0$. Hence, the family of duals
\[
\tilde{G} = \left\{
\begin{bmatrix} \frac{1}{\sqrt{2}} \\ \beta_1 \\ \gamma_1 \end{bmatrix},\;
\begin{bmatrix} \frac{1}{\sqrt{2}} \\ -\beta_1 \\ \gamma_1 \end{bmatrix},\;
\begin{bmatrix} 0 \\ \frac{1}{\sqrt{2}} \\ \frac{1}{\sqrt{2}} \end{bmatrix},\;
\begin{bmatrix} 0 \\ \frac{1}{\sqrt{2}} \\ -\frac{1}{\sqrt{2}} \end{bmatrix}
\right\}, \quad \beta_1, \gamma_1 \in \mathbb{R},
\]
forms a collection of $1-$erasure spectrally optimal $K$-duals of $F$. Therefore, the canonical dual $K^\dag F$ is not the unique $1-$erasure spectrally optimal $K-$dual of $F$.
\end{example}

\section*{Conclusion}

In this paper, we explored the problem of optimal reconstruction in the presence of erasures within the framework of $K$-frames. We addressed two main questions: identifying optimal $(N,n)$ $K$-dual pairs and determining optimal $K$-dual frames for a fixed Parseval $K$-frame under both operator norm and spectral radius criteria. We introduced and characterized $1$-uniform and $2$-uniform $K$-dual pairs, demonstrating their role in achieving optimality for one and two erasures, respectively. Tight lower bounds for reconstruction errors were established, and explicit conditions under which these bounds are attained were provided. For fixed Parseval $K$-frames, we derived both sufficient and necessary conditions under which the canonical dual is the unique optimal dual. Several examples illustrating the uniqueness and non-uniqueness of optimal duals were presented to highlight the geometric and algebraic structure of the solution space. Overall, our results extend and unify existing studies in classical frame theory and offer a robust operator theoretic framework for analyzing erasures and recovery in redundant systems. An interesting open direction is to study the case when $K$ is a negative definite operator, which remains unresolved within the current framework.

    \vspace{1cm}

	\noindent
	{\bf Acknowledgments:} 
	The authors are grateful to the Mohapatra Family Foundation and the College of Graduate Studies of the University of Central Florida for their support during this research. This work is also partially supported by NSF grant DMS-2105038.\\
    
    \noindent
	{\bf Data availability: } 
	The authors declare that no data has been used for this research.

\noindent
	{\bf Ethical approval: }  Not applicable.\\
\noindent
	{\bf Informed consent: } Not applicable.\\
\noindent
	{\bf Conflict of Interest:} Not Applicable.

\bibliographystyle{amsplain}

\end{document}